\newtheorem{theorem}{Theorem}[section]
\newtheorem{proposition}[theorem]{Proposition}
\newtheorem{lemma}[theorem]{Lemma}
\newtheorem{corollary}[theorem]{Corollary}
\newtheorem{conjecture}[theorem]{Conjecture}
\theoremstyle{definition}
\newtheorem{definition}[theorem]{Definition}
\numberwithin{equation}{section}
\begin{document}

\title[Vector bundles on Riemann surface with projective structure]{Vector bundles and 
connections on Riemann surfaces with projective structure}

\author[I. Biswas]{Indranil Biswas}

\address{School of Mathematics, Tata Institute of Fundamental Research,
Homi Bhabha Road, Mumbai 400005}

\email{indranil@math.tifr.res.in}

\author[J. Hurtubise]{Jacques Hurtubise}

\address{Department of Mathematics, McGill University, Burnside
Hall, 805 Sherbrooke St. W., Montreal, Que. H3A 2K6, Canada}

\email{jacques.hurtubise@mcgill.ca}

\author[V. Roubtsov]{Vladimir Roubtsov}

\address{UNAM, LAREMA UMR 6093 du CNRS, Universit\'e d'Angers, 49045 Cedex 01,
Angers, France}

\email{vladimir.roubtsov@univ-angers.fr}

\subjclass[2010]{14H60, 16S32, 14D21, 53D30}

\keywords{Projective structure, differential operator, holomorphic connection,
oper, torsor}

\date{}

\begin{abstract}
Let ${\mathcal B}_g(r)$ be the moduli space of triples of the form $(X,\, K^{1/2}_X,\, F)$, where $X$ is 
a compact connected Riemann surface of genus $g$, with $g\, \geq\, 2$, $K^{1/2}_X$ is a theta 
characteristic on $X$, and $F$ is a stable vector bundle on $X$ of rank $r$ and degree zero. We 
construct a $T^*{\mathcal B}_g(r)$--torsor ${\mathcal H}_g(r)$ over ${\mathcal B}_g(r)$. This 
generalizes on the one hand the torsor over the moduli space of stable vector bundles of rank $r$, on a 
fixed Riemann surface $Y$, given by the moduli space of holomorphic connections on the
stable vector bundles of rank $r$ on $Y$, and 
on the other hand the torsor over the moduli space of Riemann surfaces given by the moduli space of 
Riemann surfaces with a projective structure. It is shown that ${\mathcal H}_g(r)$ has a holomorphic 
symplectic structure compatible with the $T^*{\mathcal B}_g(r)$--torsor structure. We also describe 
${\mathcal H}_g(r)$ in terms of the second order matrix valued differential operators. It is shown that 
${\mathcal H}_g(r)$ is identified with the $T^*{\mathcal B}_g(r)$--torsor given by the sheaf of 
holomorphic connections on the theta line bundle over ${\mathcal B}_g(r)$.
\end{abstract}

\maketitle

\tableofcontents

\section{Introduction}\label{sec1}

Let $X$ be a compact connected Riemann surface of genus $g$, with $g\, \geq \,2$, and ${\mathcal 
N}_X(r)$ the moduli space of stable vector bundles on $X$ of rank $r$ and degree zero. Let ${\mathcal 
C}_X(r)$ be the moduli space of rank $r$ holomorphic connections $(E,\, D)$ on $X$ such that the 
underlying holomorphic vector bundle $E$ is stable. It has a forgetful map to ${\mathcal N}_X(r)$ that 
sends any $(E,\, D)$ to $E$, and ${\mathcal C}_X(r)$ is a torsor over ${\mathcal N}_X(r)$ for the 
holomorphic cotangent bundle $T^*{\mathcal N}_X(r)$. Moreover, ${\mathcal C}_X(r)$ has a holomorphic 
symplectic structure which is compatible with the $T^*{\mathcal N}_X(r)$--torsor structure.

There is another $T^*{\mathcal N}_X(r)$--torsor that one can define. To each bundle $E\, \in\, 
{\mathcal N}_X(r)$ we associate the Quillen determinant line for its $\overline\partial$ operator, and 
this defines a holomorphic line bundle $L$ over ${\mathcal N}_X(r)$. Then consider the sheaf of 
pointwise connections ${\rm Conn}(L)$ over ${\mathcal N}_X(r)$ (i.e., the sheaf on ${\mathcal N}_X(r)$ 
whose sections are holomorphic connections on $L$). This is again a $T^*{\mathcal N}_X(r)$--torsor
over ${\mathcal N}_X(r)$.

The surprise, established in \cite{BH1, BH2} is that there is a canonical isomorphism between these 
two torsors. The isomorphism is constructed by taking $C^\infty$ sections on both sides that at the first 
sight do not seem to have much to do with each other, but have the same data encoding the torsor. For 
${\mathcal C}_X(r)$, we associate to each bundle its unitary connection given by the 
Narasimhan--Seshadri theorem; for ${\rm Conn}(L)$ the Chern connection for the Quillen metric on $L$. There is 
a holomorphic version of this equivalence by sections, expanded in \cite{BH2}, in which the line bundle 
$L$ is restricted to the complement $\mathcal U$ of the theta divisor in ${\mathcal N}_X(r)$ associated 
to a theta characteristic $K^{1/2}_X$ of $X$, so the canonical trivialization of $L$ over $\mathcal U$ 
produces an integrable holomorphic connection on $L\big\vert_{\mathcal U}$. On the other hand, a 
holomorphic connection on any bundle $E\, \in\, \mathcal U$ is obtained as the asymptotic data of a canonical 
section of $(E\otimes K^{1/2}_X)\boxtimes (E^*\otimes K^{1/2}_X)$ over $X\times X$, with singularities 
on the diagonal. There is a holomorphic isomorphism between ${\rm Conn}(L)$ and ${\mathcal C}_X(r)$ that 
maps the section of ${\rm Conn}(L)\big\vert_{\mathcal U}$ given be the trivialization of 
$L\big\vert_{\mathcal U}$ to the section of ${\mathcal C}_X(r)\big\vert_{\mathcal U}$ given by the above 
canonical section. In fact, not only is there an isomorphism, but it is obtained by linking together 
rather special and hitherto unrelated objects on each side of the equivalence, in several different 
ways.

The holomorphic version allows us, of course to move not only the bundle, but the base curve as well, and the 
question we are examining in this paper is whether we can again have two a priori inequivalent 
holomorphic $T^*{\mathcal B}_g(r)$ torsors over the space ${\mathcal B}_g(r)$ of pairs (bundles, curves) 
actually turn out to be the same. The answer turns out to be yes. As a bonus, we have a hereto undefined 
symplectic structure for one of the torsors.

To explain with more details, let ${\mathcal P}_g$ denote the moduli space of Riemann surfaces of genus 
$g$ equipped with a projective structure. It has a natural map to the moduli space ${\mathcal M}_g$ of 
Riemann the surfaces of genus $g$ that simply forgets the projective structure, and ${\mathcal P}_g$ is 
a holomorphic torsor over ${\mathcal M}_g$ for the holomorphic cotangent bundle $T^*{\mathcal M}_g$. 
Also, ${\mathcal P}_g$ has a holomorphic symplectic structure which is compatible with the $T^*{\mathcal 
M}_g$--torsor structure.

To combine the torsors ${\mathcal P}_g$ and ${\mathcal C}_X(r)$ into a single one, let ${\mathcal B}_g(r)$
be the moduli space of triples of the form $(X,\, K^{1/2}_X,\, F)$, where
\begin{itemize}
\item $X$ is a compact connected Riemann surface of genus $g$, with $g\, \geq\, 2$,

\item $K^{1/2}_X$ is a theta characteristic on $X$ (the holomorphic cotangent bundle of $X$ is denoted 
by $K_X$), and

\item $F$ is a stable vector bundle over $X$ of rank $r$ and degree zero.
\end{itemize}

Fix a theta characteristic $K^{1/2}_X$ on a compact Riemann surface $X$ of genus $g$. Given 
a stable vector bundle $F$ on $X$ of rank $r$ and degree zero, we construct a certain 
quotient space of the space of all holomorphic connections on the first order jet bundle 
$J^1(F\otimes (K^{1/2}_X)^*)$; this quotient space is denoted by ${\mathcal D}(F)$ (see
\eqref{qe} and Corollary \ref{cor1}). Let ${\mathcal H}_g(r)$ denote the moduli space of
objects of the form $(X,\, K^{1/2}_X,\, F,\, D)$, where $(X,\, K^{1/2}_X,\, F)\, \in\, 
{\mathcal B}_g(r)$ and $D\, \in\, {\mathcal D}(F)$. It has a natural map
$$
\gamma\, :\, {\mathcal H}_g(r)\, \longrightarrow\, {\mathcal B}_g(r)
\, ,\ \ \ (X,\, K^{1/2}_X,\, F,\, D)\, \longmapsto\,(X,\, K^{1/2}_X,\, F)\, .
$$

We prove the following (see Theorem \ref{thm1} and Corollary \ref{cor5}):

\begin{theorem}\label{thm-i1}\mbox{}
\begin{enumerate}
\item ${\mathcal H}_g(r)$ is a torsor over ${\mathcal B}_g(r)$ for
the holomorphic cotangent bundle $T^*{\mathcal B}_g(r)$.

\item ${\mathcal H}_g(r)$ has an algebraic symplectic structure $\Omega_{{\mathcal H}_g(r)}$.

\item The symplectic form $\Omega_{{\mathcal H}_g(r)}$ on ${\mathcal H}_g(r)$ is compatible with the
$T^*{\mathcal B}_g(r)$--torsor structure of ${\mathcal H}_g(r)$.

\item There is a holomorphic line bundle $\mathbf{L}$ on ${\mathcal H}_g(r)$, and a holomorphic
connection $\nabla^{\mathbf{L}}$ on $\mathbf{L}$, such that the curvature of $\nabla^{\mathbf{L}}$
is the symplectic form $\Omega_{{\mathcal H}_g(r)}$.
\end{enumerate}
\end{theorem}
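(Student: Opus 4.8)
The plan is to reduce all four assertions to understanding the classifying cocycle of $\gamma\,\colon\,{\mathcal H}_g(r)\,\longrightarrow\,{\mathcal B}_g(r)$ as a torsor under $T^*{\mathcal B}_g(r)=\Omega^1_{{\mathcal B}_g(r)}$. I would begin with (1). At a point $b=(X,\,K^{1/2}_X,\,F)$ the deformation theory of the triple identifies $T_b{\mathcal B}_g(r)$ with $H^1(X,\,\mathrm{At}(F))$, where $\mathrm{At}(F)$ is the Atiyah bundle sitting in $0\to\mathrm{End}(F)\to\mathrm{At}(F)\to T_X\to 0$ (the theta characteristic contributes no tangent directions, deforming uniquely with $X$); by Serre duality $T^*_b{\mathcal B}_g(r)=H^0(X,\,\mathrm{At}(F)^\vee\otimes K_X)$. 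On the other side, the space of holomorphic connections on $J^1(F\otimes(K^{1/2}_X)^*)$ is an affine space modelled on $H^0(X,\,\mathrm{End}(J^1(F\otimes(K^{1/2}_X)^*))\otimes K_X)$, and the quotient defining ${\mathcal D}(F)$ retains precisely the oper-type (second order differential operator) data. The core computation is to feed the jet-bundle extension $0\to F\otimes K^{1/2}_X\to J^1(F\otimes(K^{1/2}_X)^*)\to F\otimes(K^{1/2}_X)^*\to 0$ into the associated long exact sequences and check that the modelling vector space of ${\mathcal D}(F)$ is canonically $H^0(X,\,\mathrm{At}(F)^\vee\otimes K_X)=T^*_b{\mathcal B}_g(r)$; globalising this fibrewise statement gives the torsor structure.

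For (2) and (3) I would invoke the cohomological classification of $\Omega^1$-torsors: such a torsor is encoded by a class $\alpha\in H^1({\mathcal B}_g(r),\,\Omega^1)$ given by the $1$-forms $s_j-s_i$ comparing local trivialising sections, and it carries a canonical holomorphic symplectic form compatible with the torsor structure exactly when these comparison forms can be chosen closed, i.e.\ when $\alpha$ lifts along $H^1({\mathcal B}_g(r),\,\Omega^1_{\mathrm{cl}})\to H^1({\mathcal B}_g(r),\,\Omega^1)$. In that case the local canonical forms $d\theta_i$ on $T^*U_i$ differ on overlaps by $p^*d(s_j-s_i)=0$ and hence patch to a global closed, nondegenerate $\Omega_{{\mathcal H}_g(r)}$ that is tautologically compatible with the affine structure. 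Thus (2) and (3) reduce to showing the classifying cocycle of ${\mathcal H}_g(r)$ is represented by closed forms; I would extract this cocycle explicitly from the jet-bundle/second order operator description and verify its closedness directly, generalising the classical closedness of the Schwarzian cocycle underlying the projective-structure torsor ${\mathcal P}_g$ over ${\mathcal M}_g$.

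For (4) I would strengthen the previous step from ``closed'' to ``Atiyah class'': show that $\alpha$ equals $c_1(\Theta)\in H^1({\mathcal B}_g(r),\,\Omega^1)$, the Atiyah class of the theta line bundle $\Theta$ on ${\mathcal B}_g(r)$ (the identification flagged in the abstract). Since $c_1(\Theta)$ is represented by the closed forms $d\log g_{ij}$ attached to transition functions $g_{ij}$ of $\Theta$, this simultaneously reproves (2)--(3) and identifies ${\mathcal H}_g(r)$ with the sheaf of holomorphic connections $\mathrm{Conn}(\Theta)$ as $T^*{\mathcal B}_g(r)$-torsors. On $\mathrm{Conn}(\Theta)$ there is the tautological prequantisation: taking $\mathbf{L}=p^*\Theta$ with $p\,\colon\,{\mathcal H}_g(r)\to{\mathcal B}_g(r)$, the connection whose value at a point --- an actual connection on the fibre of $\Theta$ --- is that connection itself has curvature equal to the canonical symplectic form. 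This produces $\nabla^{\mathbf{L}}$ with curvature $\Omega_{{\mathcal H}_g(r)}$, giving (4).

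The main obstacle is the identification $\alpha=c_1(\Theta)$. Over a fixed curve this is exactly the content of the equivalence established in \cite{BH1, BH2} between the connection torsor ${\mathcal C}_X(r)$ and the determinant/theta connection torsor on ${\mathcal N}_X(r)$; the new difficulty is to propagate the comparison through the curve directions, so that the projective-structure contribution (matching the theta bundle on ${\mathcal M}_g$) and the bundle contribution (matching the determinant-of-cohomology bundle on ${\mathcal N}_X(r)$) assemble coherently into the single class $c_1(\Theta)$ on ${\mathcal B}_g(r)$. Controlling the $K^{1/2}_X$-twist and the exact normalisation of $\Omega_{{\mathcal H}_g(r)}$ throughout this assembly is where I expect the real work to concentrate.
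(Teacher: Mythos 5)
Your treatment of part (1) is essentially the paper's own argument: the paper (Proposition \ref{prop2} together with Lemma \ref{lem2} and Corollary \ref{cor4}) shows that ${\mathcal D}(F)$ is an affine space modelled on $H^1(X,{\rm At}(F))^*\,=\,H^0(X,{\rm At}(F)^*\otimes K_X)$ by exactly the computation you describe, feeding the jet-bundle extension into long exact sequences and using the discreteness of theta characteristics to identify $H^1(X,{\rm At}(F))$ with $H^1(X,{\rm At}(F\otimes K^{-1/2}_X))$; the only points you elide are the transitivity of the action on the quotient (which the paper gets from Lemma \ref{lem5}) and the bookkeeping of the quotient by ${\rm Aut}(J^1(F\otimes K^{-1/2}_X))$. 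Your architecture for (2)--(4) also matches the paper's: everything is derived from the single identification of ${\mathcal H}_g(r)$ with the torsor ${\mathcal C}({\mathcal L})$ of connections on ${\mathcal L}\,=\,{\mathcal O}_{{\mathcal B}_g(r)}(\Theta)$, after which the symplectic form, its compatibility with the torsor structure, and the prequantizing pair $(\mathbf{L},\nabla^{\mathbf L})$ are all pulled back from the tautological connection on $\rho^*{\mathcal L}$ over ${\mathcal C}({\mathcal L})$, exactly as in your last paragraph on (4).

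The genuine gap is the step you yourself flag as ``the main obstacle'': the identification of the classifying class of ${\mathcal H}_g(r)$ with the Atiyah class of ${\mathcal L}$. You state that the difficulty is to propagate the fixed-curve comparison of \cite{BH1,BH2} through the curve directions, but you give no argument, and this is where essentially all of the content of the paper's Theorem \ref{thm3} lives. Concretely, the paper must first produce a canonical section ${\mathbb S}$ of $\gamma$ over ${\mathcal B}_g(r)\setminus\Theta$ (built from the Szeg\H{o}-kernel-type section $\beta_F$ via the second-order differential operator description of ${\mathcal D}(F)$, i.e.\ Theorem \ref{thm2}), match it against the section $G$ of ${\mathcal C}({\mathcal L})$ coming from the tautological trivialization of ${\mathcal L}$ off $\Theta$, and then prove that the resulting isomorphism $H$ extends across $\Theta$. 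That extension is not formal: after quotienting by $\beta^*T^*{\mathcal M}^\theta_g$ the fibrewise extension is supplied by \cite[Corollary 4.5]{BH2}, but the remaining $\beta^*T^*{\mathcal M}^\theta_g$-valued discrepancy is controlled only by exploiting the involution $F\mapsto F^*$ (which acts as $-1$ on $H^1({\rm Pic}^0(X),{\mathcal O})$, forcing the obstruction class $\mathbf c$ to vanish) together with the vanishing statement $H^0({\mathcal B}^t,{\mathcal O}_{{\mathcal B}^t}({\mathcal B}^t\cap\Theta))\,=\,\mathbb C$ from \cite{BNR} to rule out a first-order pole along $\Theta$. Without this (or some substitute, e.g.\ a direct verification that your cocycle $\alpha$ equals $c_1({\mathcal L})$), parts (2)--(4) are not established; note also that your fallback for (2)--(3), verifying closedness of the cocycle ``directly, generalising the Schwarzian cocycle,'' is likewise only announced and would still require a concrete computation that the paper avoids by routing everything through ${\mathcal C}({\mathcal L})$.
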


Consider the standard subbundle $F\otimes (K^{1/2}_X)^*\otimes K_X\,=\, F\otimes K^{1/2}_X\, 
\subset\, J^1(F\otimes (K^{1/2}_X)^*)$. For any holomorphic connection $\mathbb D$ on 
$J^1(F\otimes (K^{1/2}_X)^*)$, the second fundamental form of $F\otimes K^{1/2}_X$ for 
$\mathbb D$ is the identity map of $F$ (see Corollary \ref{cor1}). This property is very 
similar to the defining property of opers. We recall that opers were introduced by Beilinson 
and Drinfeld \cite{BD}, \cite{BD2}. Their motivation came from the works of Drinfeld and Sokolov 
\cite{DS1}, \cite{DS2}. We note that the study of opers within geometry and mathematical 
physics has received much attention in the recent times.

There is a natural divisor $\Theta\,\subset\, {\mathcal B}_g(r)$ consisting of all
$(X,\, K^{1/2}_X,\, F)$ such that $H^0(X,\, F\otimes K^{1/2}_X)\, \not=\, 0$. The
line bundle on ${\mathcal B}_g(r)$ defined by $\Theta$ will be denoted by $\mathcal L$. Let
$$
{\mathcal C}({\mathcal L})\,\longrightarrow\, {\mathcal B}_g(r)
$$
be the holomorphic fiber bundle defined by the sheaf of
holomorphic connection on $\mathcal L$. So the 
space of holomorphic sections of ${\mathcal C}({\mathcal L})$ over an open subset $U\, 
\subset\, {\mathcal B}_g(r)$ is the space of all holomorphic connections on ${\mathcal 
L}\big\vert_U\, \longrightarrow\, U$. This ${\mathcal C}({\mathcal L})$ is an algebraic torsor over 
${\mathcal B}_g(r)$ for the holomorphic cotangent bundle $T^*{\mathcal B}_g(r)$.

We prove the following (see Theorem \ref{thm3}):

\begin{theorem}\label{thm-i2}
There is a canonical algebraic isomorphism of $T^*{\mathcal B}_g(r)$--torsors
$$
{\mathcal H}\, :\, {\mathcal H}_g(r)\, \longrightarrow\, {\mathcal C}({\mathcal L})\, .
$$
\end{theorem}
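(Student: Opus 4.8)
The plan is to use the fact that $\mathcal{H}_g(r)$ and $\mathcal{C}(\mathcal{L})$ are both algebraic torsors over $\mathcal{B}_g(r)$ for the \emph{same} locally free sheaf $T^*\mathcal{B}_g(r)$, and that such torsors are rigid: over any open set on which each torsor admits a section, the two sections provide trivializations identifying each torsor with $T^*\mathcal{B}_g(r)$, and composing one trivialization with the inverse of the other yields a canonical isomorphism of torsors there. Since $\mathcal{B}_g(r)$ is irreducible, an isomorphism defined on a dense open set extends in at most one way, so canonicity will follow once I produce a distinguished section of each torsor over a common dense open set and check that the resulting map extends. This mirrors exactly the single-curve comparison between $\mathcal{C}_X(r)$ and the theta-bundle connection torsor recalled in the introduction, and the present statement is its family version over $\mathcal{M}_g$.

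First I would pass to the complement $\mathcal{U}\,:=\,\mathcal{B}_g(r)\setminus\Theta$ of the theta divisor and exhibit a canonical section on each side. On the $\mathcal{C}(\mathcal{L})$ side, the tautological section of $\mathcal{L}\,=\,\mathcal{O}(\Theta)$ cutting out $\Theta$ is nowhere vanishing on $\mathcal{U}$, hence trivializes $\mathcal{L}\big\vert_{\mathcal{U}}$; the connection that is flat in this trivialization is a canonical holomorphic connection on $\mathcal{L}\big\vert_{\mathcal{U}}$, i.e.\ a canonical section $\sigma_{\mathcal{C}}$ of $\mathcal{C}(\mathcal{L})\big\vert_{\mathcal{U}}$. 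On the $\mathcal{H}_g(r)$ side, for $(X,\,K^{1/2}_X,\,F)\,\in\,\mathcal{U}$ one has $H^0(X,\,F\otimes K^{1/2}_X)\,=\,0$, and since $\chi(F\otimes K^{1/2}_X)\,=\,0$ also $H^1(X,\,F\otimes K^{1/2}_X)\,=\,0$; this vanishing produces the canonical section of $(F\otimes K^{1/2}_X)\boxtimes(F^*\otimes K^{1/2}_X)$ on $X\times X$ with prescribed singularity along the diagonal, whose asymptotic data along the diagonal defines a canonical element $D_0\,\in\,\mathcal{D}(F)$, exactly as in \cite{BH1, BH2}. This gives a canonical section $\sigma_{\mathcal{H}}$ of $\mathcal{H}_g(r)\big\vert_{\mathcal{U}}$. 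I then define $\mathcal{H}$ over $\mathcal{U}$ to be the unique $T^*\mathcal{B}_g(r)$--torsor isomorphism sending $\sigma_{\mathcal{H}}$ to $\sigma_{\mathcal{C}}$.

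The hard part will be extending $\mathcal{H}$ across $\Theta$. Since $\Theta$ is only a divisor, this is not a formal Hartogs extension: both $\sigma_{\mathcal{C}}$ and $\sigma_{\mathcal{H}}$ acquire poles along $\Theta$, and I must verify that their polar parts cancel, so that the difference---a priori a meromorphic section of $T^*\mathcal{B}_g(r)$ encoding the transition datum---is in fact the transition datum of a \emph{regular} torsor isomorphism. Concretely, I would analyze a generic point of $\Theta$ and show that the singularity of the theta-bundle connection $\sigma_{\mathcal{C}}$, governed by $d\log$ of the defining section of $\Theta$, matches the singularity of the connection $\sigma_{\mathcal{H}}$ coming from the degenerating canonical (Szeg\H{o}-type) kernel. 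Along the fibres of $\mathcal{B}_g(r)\,\to\,\mathcal{M}_g$ this matching is precisely the single-curve isomorphism of \cite{BH1, BH2}; the new input is the compatibility in the base directions of $\mathcal{M}_g$, which is supplied by the projective-structure normalization encoded in the oper-type property that the second fundamental form of $F\otimes K^{1/2}_X\,\subset\,J^1(F\otimes(K^{1/2}_X)^*)$ is the identity. Once regularity across $\Theta$ is established, the extended map is automatically algebraic and compatible with the $T^*\mathcal{B}_g(r)$--torsor structure, and uniqueness of the extension from the dense open set $\mathcal{U}$ makes it canonical.

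As an independent check, and a route that partly bypasses the polar-cancellation estimates, I would compare the symplectic and prequantum data. The total space $\mathcal{C}(\mathcal{L})$ carries a canonical symplectic structure whose prequantum line bundle has curvature equal to the Atiyah form representing $c_1(\mathcal{L})$; Theorem \ref{thm-i1}(4) supplies a line bundle $\mathbf{L}$ with connection $\nabla^{\mathbf{L}}$ on $\mathcal{H}_g(r)$ whose curvature is $\Omega_{\mathcal{H}_g(r)}$. Matching $(\mathbf{L},\,\nabla^{\mathbf{L}})$ with this prequantum bundle identifies the two symplectic $T^*\mathcal{B}_g(r)$--torsors and reduces the global extension problem to the equality of the classes $[\mathcal{H}_g(r)]$ and $c_1(\mathcal{L})$ in $H^1(\mathcal{B}_g(r),\,\Omega^1)$, which can be verified on the smooth locus $\mathcal{U}$. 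Combining the section-matching over $\mathcal{U}$ with this cohomological identity will pin down $\mathcal{H}$ as the asserted canonical isomorphism of $T^*\mathcal{B}_g(r)$--torsors.
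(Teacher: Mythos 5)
Your construction of $\mathcal{H}$ over $\mathcal{U}=\mathcal{B}_g(r)\setminus\Theta$ is exactly the paper's: the section $G$ of $\mathcal{C}(\mathcal{L})$ from the tautological trivialization of $\mathcal{O}(\Theta)$ off $\Theta$, the section $\mathbb{S}$ of $\mathcal{H}_g(r)$ from the Szeg\H{o}-type kernel $\beta_F$, and the torsor map determined by matching them. You also correctly isolate the real difficulty (extension across $\Theta$) and correctly observe that in the fibre directions of $\beta:\mathcal{B}_g(r)\to\mathcal{M}^\theta_g$ the extension is exactly the single-curve result of \cite{BH2}. The gap is in the horizontal directions. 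Your plan --- a local polar-cancellation analysis at a generic point of $\Theta$, with ``the oper-type normalization'' invoked as the mechanism --- is not an argument; no computation is indicated that would show the $T^*\mathcal{M}^\theta_g$-components of the two polar parts agree, and it is precisely this component that has no analogue in \cite{BH1,BH2}. The paper avoids any such local analysis: it quotients both torsors by $\beta^*T^*\mathcal{M}^\theta_g$, extends the resulting $T^*_\beta$-torsor isomorphism fibrewise by \cite[Corollary 4.5]{BH2}, and then packages the remaining horizontal discrepancy as a $\beta^*T^*\mathcal{M}^\theta_g$-torsor $\mathcal{G}$ over $\mathcal{B}_g(r)$. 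The class of $\mathcal{G}$ restricted to a fibre $\mathcal{B}^t$ lives in $H^1(\mathcal{B}^t,\mathcal{O}_{\mathcal{B}^t})\otimes T^*_t\mathcal{M}^\theta_g\cong H^1(\mathrm{Pic}^0(X),\mathcal{O})\otimes T^*_t\mathcal{M}^\theta_g$; it is invariant under the involution $F\mapsto F^*$, which acts as $-1$ on $H^1(\mathrm{Pic}^0(X),\mathcal{O})$, so the class vanishes. The discrepancy then becomes a $T^*_t\mathcal{M}^\theta_g$-valued meromorphic function on $\mathcal{B}^t$ with at most a simple pole along $\Theta\cap\mathcal{B}^t$, and $H^0(\mathcal{B}^t,\mathcal{O}(\Theta\cap\mathcal{B}^t))=\mathbb{C}$ (Beauville--Narasimhan--Ramanan) forces it to be regular. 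None of this global input (the duality involution, the one-dimensionality of sections of the theta bundle) appears in your sketch, and without it the extension step is unproved.

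Your fourth paragraph does not rescue this: in the paper the symplectic form $\Omega_{\mathcal{H}_g(r)}$ and the prequantum pair $(\mathbf{L},\nabla^{\mathbf{L}})$ of Theorem \ref{thm-i1}(4) are \emph{defined} by pulling back $\Omega_{\mathcal{L}}$ and $(\rho^*\mathcal{L}, f_0\circ h_0)$ along the isomorphism $\mathcal{H}$ (see Corollary \ref{cor5}), so using them to construct or verify $\mathcal{H}$ is circular. You would need an independent construction of a symplectic structure on $\mathcal{H}_g(r)$ to make that route non-vacuous, and the paper does not provide one (this is essentially the content of Conjecture \ref{conj}).
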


Projective structures on a Riemann surface $X$ are defined by giving a holomorphic coordinate
atlas on $X$ such that all the transition functions are M\"obius transformations.
Projective structures on a Riemann surface $X$ are identified with holomorphic
ordinary differential operators $\mathcal D$ on $X$ of order two such that
\begin{itemize}
\item the symbol of $\mathcal D$ is the constant function $1$ on $X$, and

\item the sub-leading term of $\mathcal D$ vanishes identically (equivalently, the
$\text{GL}(2, {\mathbb C})$--local system of $X$ defined by the sheaf of solutions of $\mathcal D$ 
is actually a $\text{SL}(2, {\mathbb C})$--local system).
\end{itemize}

The above mentioned space ${\mathcal H}_g(r)$ admits a similar description in terms of the second 
order matrix valued differential operators. To explain this, for any
$(X,\, K^{1/2}_X,\, F)\, \in\, {\mathcal B}_g(r)$, let
$$
\widetilde{\mathcal D}'(X,\, K^{1/2}_X,\, F)\, \subset\,
H^0(X,\, \text{Diff}^2_X(F\otimes K^{-1/2}_X,\, F\otimes K^{3/2}_X))
$$
be the locus of holomorphic
differential operators whose symbol is ${\rm Id}_F\, \in\, H^0(X,\, \text{End}(F))$.
The vector space $H^0(X,\, \text{End}(F)\otimes K^{\otimes 2}_X)$ acts freely on
$\widetilde{\mathcal D}'(X,\, K^{1/2}_X,\, F)$ (but this action is not transitive).

We prove the following (see Theorem \ref{thm2}):

\begin{theorem}\label{thm-i3}
There is a canonical bijection between ${\mathcal D}(F)$ and the quotient space
$$\widetilde{\mathcal D}'(X,\, K^{1/2}_X,\, F)\big{/}H^0(X,\, {\rm ad}(F)\otimes K^{\otimes 2}_X)\, .$$
\end{theorem}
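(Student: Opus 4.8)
The plan is to exploit the classical equivalence, underlying the theory of opers, between holomorphic connections on the rank $2r$ bundle $J^1(V)$, where $V\,:=\,F\otimes (K^{1/2}_X)^*$, and holomorphic differential operators of order two $V\,\longrightarrow\, V\otimes K_X^{\otimes 2}\,=\,F\otimes K^{3/2}_X$ whose symbol is ${\rm Id}_F$. Recall that the canonical subbundle in the jet sequence $0\,\longrightarrow\, V\otimes K_X\,\longrightarrow\, J^1(V)\,\longrightarrow\, V\,\longrightarrow\, 0$ is exactly $V\otimes K_X\,=\,F\otimes K^{1/2}_X$, and that on a curve every holomorphic connection is flat. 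First I would construct, from a holomorphic connection $\mathbb D$ on $J^1(V)$, a second order operator $\Phi(\mathbb D)$: locally one eliminates the subbundle component of a $\mathbb D$--flat section using the tautological first order jet operator $V\,\longrightarrow\, J^1(V)$, and the property that the second fundamental form of $V\otimes K_X$ for $\mathbb D$ is ${\rm Id}_F$ (Corollary \ref{cor1}) guarantees that the resulting operator has leading symbol ${\rm Id}_F$, hence lies in $\widetilde{\mathcal D}'(X,\, K^{1/2}_X,\, F)$. Invariantly, $\Phi(\mathbb D)$ is the operator whose sheaf of local solutions is the image under the projection $J^1(V)\,\longrightarrow\, V$ of the rank $2r$ local system of $\mathbb D$--flat sections.

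In the opposite direction I would define $\Psi\,:\,\widetilde{\mathcal D}'(X,\, K^{1/2}_X,\, F)\,\longrightarrow\,\{\text{connections on }J^1(V)\}$ by sending an operator $P$ to the connection $\Psi(P)$ whose flat sections are the $1$--jets of solutions of $P$; this is manifestly canonical and coordinate free, and the symbol condition on $P$ ensures that the prolongation identifies the solution local system with a full holomorphic connection on $J^1(V)$. A local companion matrix computation in a coordinate $z$, writing $\mathbb D\,=\,d+\Omega$ with $\Omega$ in block form relative to a local splitting of the jet sequence, shows that $\Phi$ and $\Psi$ are compatible: one reads off that $\Phi(\Psi(P))\,=\,P$, so that $\Phi$ is surjective and $\Psi$ is injective. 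The same computation records how the lower order coefficients of $\Phi(\mathbb D)$ sit inside the blocks of $\Omega$; in particular the off diagonal block mapping $V$ into $V\otimes K_X$ carries the zeroth order part of the operator.

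The heart of the argument is then to match the two quotients. I would recall the definition \eqref{qe} of $\mathcal D(F)$ as the quotient of the space of holomorphic connections on $J^1(V)$ by the relevant subspace $W\,\subset\, H^0(X,\, \text{End}(J^1(V))\otimes K_X)$, and compute the effect of the two natural affine actions under $\Phi$ and $\Psi$. Adding a zeroth order term $\phi\,\in\, H^0(X,\, \text{End}(F)\otimes K^{\otimes 2}_X)$ to an operator $P$ changes $\Psi(P)$ by an element supported in the off diagonal block noted above; I would show that this change is absorbed by $W$, so that the two connections represent the same point of $\mathcal D(F)$, precisely when $\phi$ is trace free, i.e. $\phi\,\in\, H^0(X,\, {\rm ad}(F)\otimes K^{\otimes 2}_X)$. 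Conversely, the gauge freedom in the diagonal block of $\Omega$, which is invisible to $\Phi$, is exactly the complementary part of $W$; this forces every class in $\mathcal D(F)$ to have a representative of companion form $\Psi(P)$, giving surjectivity of the induced map $\widetilde{\mathcal D}'\,\longrightarrow\,\mathcal D(F)$. Combining these, $\Phi$ and $\Psi$ descend to mutually inverse bijections between $\mathcal D(F)$ and $\widetilde{\mathcal D}'(X,\, K^{1/2}_X,\, F)\big/H^0(X,\, {\rm ad}(F)\otimes K^{\otimes 2}_X)$, and the construction is canonical since no choices enter $\Phi$, $\Psi$, or the solution sheaf description.

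I expect the main obstacle to be this last matching of equivalence relations, and specifically the trace subtlety: one must show that exactly the trace free part of the zeroth order term is killed in passing to $\mathcal D(F)$, while the scalar (trace) part survives and corresponds to the quadratic differential, that is, the projective structure direction of the cotangent bundle $T^*{\mathcal B}_g(r)$. This is the point where the distinction between ${\rm End}(F)$ and ${\rm ad}(F)$, and the role of the theta characteristic $K^{1/2}_X$ in trivializing the determinant and forcing the vanishing of the subleading term for the scalar factor, must be used carefully; a clean way to organize it is to decompose $\text{End}(F)\,=\,{\rm ad}(F)\oplus \mathcal O_X$ and track the two summands separately through $\Phi$ and $\Psi$. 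A secondary point to verify is the coordinate independence and global well definedness of the companion construction, which I would settle by relying on the invariant solution sheaf and prolongation description rather than on the local block form.
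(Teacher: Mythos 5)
Your proposal is correct and follows essentially the same route as the paper: the two maps you describe (connection $\mapsto$ operator annihilating the projections of flat sections, operator $\mapsto$ connection on $J^1(F\otimes K^{-1/2}_X)$ prolonging the $1$--jets of solutions) are exactly the maps $\Psi$ and $\Phi$ of Lemmas \ref{lem6} and \ref{lem7}, constructed there via the jet--bundle diagrams \eqref{e4} and \eqref{6j}, and your matching of the equivalence relations is the descent argument carried out in those lemmas. The paper's own proof of Theorem \ref{thm2} omits the verification that the maps are mutually inverse, so your local companion--matrix computation, together with the observation that conjugation by ${\rm Aut}(J^1(F\otimes K^{-1/2}_X))$ is invisible on the operator side while the zeroth--order translation is visible exactly through its trace--free part, supplies detail the paper leaves to the reader.
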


There is a natural holomorphic projection ${\mathcal H}_g(r)\, \longrightarrow\, {\mathcal P}_g$
(see Proposition \ref{prop3}), using which we
may pullback, to ${\mathcal H}_g(r)$, the symplectic $2$-form on ${\mathcal P}_g$. On the 
other hand, using isomonodromic deformations, the symplectic form on ${\mathcal C}_X(r)$ produces
a holomorphic $2$-form on ${\mathcal H}_g(r)$. It is natural to ask whether the symplectic form
on ${\mathcal H}_g(r)$ in Theorem \ref{thm-i1}(2) is a combination of these two $2$-forms (see
Conjecture \ref{conj} for a precise formulation).

The result of this paper, as well as those of \cite{BH1}, \cite{BH2}, have a flavor of geometric
quantization: a symplectic manifold projects to a manifold of half the dimension, equipped with a
line bundle with a connection. The fibers of the projection are Lagrangians, and the curvature
of the connection is the symplectic form. We are unaware, however, of any concrete link to
geometric quantization.

\section{Stable {\it r}--opers and their properties}\label{sec2}

\subsection{Stable {\it r}--opers}\label{sec2.1}

Let $X$ be a compact connected Riemann surface; its holomorphic
cotangent (respectively, tangent) bundle will be denoted by $K_X$
(respectively, $TX$). We shall assume throughout that $$\text{genus}(X)\, =\, g \, \geq\, 2\, .$$

A {\it holomorphic
connection} on a holomorphic vector bundle $E$ over $X$ is a first order
holomorphic differential operator
$$
D\, :\, E\, \longrightarrow\, E\otimes K_X
$$
such that $D(fs) \,=\, fD(s)+s\otimes df$ for all locally defined holomorphic
functions $f$ on $X$ and all locally defined holomorphic sections $s$ of $E$;
see \cite{At}. A holomorphic connection on $E$ is automatically flat because
$\Omega^{2,0}_X\,=\, 0$.

If $D$ is a holomorphic connection on $E$, and $S\, \subset\, E$ is a holomorphic
subbundle of $E$, then consider the composition of homomorphisms
$$
S\, \hookrightarrow\, E\, \stackrel{D}{\longrightarrow}\, E\otimes K_X
\, \xrightarrow{q\otimes{\rm Id}_{K_X}}\, (E/S)\otimes K_X\, ,
$$
where
\begin{equation}\label{eq}
q\, :\, E\, \longrightarrow\, E/S
\end{equation}
is the quotient map. This composition
of homomorphisms is ${\mathcal O}_X$--linear, and hence it corresponds to a holomorphic section
\begin{equation}\label{e1}
D_S\,\in\, H^0(X,\, {\rm Hom}(S,\, E/S)\otimes K_X)\, .
\end{equation}
This homomorphism $D_S$ is called the \textit{second fundamental form} of $S$ for the connection $D$.

A holomorphic vector bundle $V$ on $X$ is called stable if $$\frac{\text{degree}(W)}{\text{rank}(W)}\, <\,
\frac{\text{degree}(V)}{\text{rank}(V)}$$ for every holomorphic subbundle $0\, \not=\, W\,\subsetneq\, V$.

\begin{definition}\label{defro}
For a positive integer $r$, a $r$--\textit{oper} on $X$ is a triple $(E,\, S,\, D)$, where
\begin{itemize}
\item $E$ is a holomorphic vector bundle on $X$ of rank $2r$,

\item $D$ is a holomorphic connection on $E$, and

\item $S\, \subset\, E$ is a holomorphic subbundle of rank $r$,
\end{itemize}
such that the second fundamental form $D_S$ defined in \eqref{e1} is an isomorphism.

A $r$--oper $(E,\, S,\, D)$ is called \textit{stable} if the holomorphic
vector bundle $S$ of rank $r$ is stable.
\end{definition}

Using the isomorphism $D_S \, :\, S \, \longrightarrow\, (E/S)\otimes K_X$ in Definition
\ref{defro}, we may identify $E/S$ with $S\otimes TX$. Invoking this isomorphism $D_S$ is made into the
identity map of $S$. Henceforth, for any $r$--oper we shall always execute the option of using this
isomorphism.

Fix a holomorphic line bundle $K^{1/2}_X$ on $X$ such that $K^{1/2}_X\otimes
K^{1/2}_X\,=\, K_X$; also fix a holomorphic isomorphism of $K^{1/2}_X\otimes
K^{1/2}_X$ with $K_X$. Such a line bundle is called a \textit{theta characteristic}
on $X$. The dual line bundle $(K^{1/2}_X)^*$ will be denoted by $K^{-1/2}_X$; also
$(K^{1/2}_X)^{\otimes n}$ (respectively, $(K^{1/2}_X)^{-\otimes n}$) will be denoted by
$K^{n/2}_X$ (respectively, $K^{-n/2}_X$) for every $n\, \geq\, 1$.

Take a $r$--oper $(E,\, S,\, D)$ on $X$. Define the vector bundle $F\, :=\,S\otimes K^{-1/2}_X$,
so we have 
\begin{equation}\label{ew}
S\,=\, F\otimes K^{1/2}_X\, .
\end{equation}
Then from the isomorphism $D_S$ we have
\begin{equation}\label{ew2}
E/S\,=\, S\otimes (K_X)^*\,=\, F\otimes K^{1/2}_X\otimes (K_X)^* \,=\, F\otimes K^{-1/2}_X\, .
\end{equation}
Since $E$ admits a holomorphic connection, we have
\begin{equation}\label{e2}
\text{degree}(E)\,=\, 0
\end{equation}
\cite[p.~202, Proposition 18(i)]{At}. Using \eqref{e2}, \eqref{ew2} and \eqref{ew} it follows that
$$
0\,=\, \text{degree}(E)\,=\, \text{degree}(S)+\text{degree}(E/S)
$$
$$
=\, \text{degree}(F\otimes K^{1/2}_X)
+\text{degree}(F\otimes K^{-1/2}_X)\,=\, 2\cdot \text{degree}(F)\, .
$$
Hence we have $\text{degree}(F)\,=\, 0$.

For any holomorphic vector bundle $W$ on $X$, the first jet bundle $J^1(W)$
fits into the following short exact sequence of holomorphic vector bundles on $X$:
\begin{equation}\label{ej}
0 \, \longrightarrow\, W\otimes K_X \, \stackrel{\iota}{\longrightarrow}\, J^1(W) \,
\longrightarrow\, J^0(W) \,=\, W \, \longrightarrow\,0\,.
\end{equation}

\begin{lemma}\label{lem1}
Let $(E,\, S,\, D)$ be a $r$--oper on $X$. Then the first jet bundle $J^1(E/S)$
is canonically identified with the holomorphic vector bundle $E$. Also, $E$ is
holomorphically identified with $J^1(S\otimes TX)$.
\end{lemma}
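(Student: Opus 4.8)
The plan is to build the identification directly from the connection $D$, using the functoriality of the first jet bundle. First I would recall that a holomorphic connection produces a canonical $\mathcal{O}_X$--linear splitting of the jet sequence \eqref{ej} for $E$. Write $j^1_E\, :\, E\, \to\, J^1(E)$ for the universal first order operator, whose symbol is the inclusion $\iota$ in \eqref{ej}, and note that $D\, :\, E\, \to\, E\otimes K_X$ is a first order operator with symbol $\mathrm{Id}_{E\otimes K_X}$. Consequently the two operators $j^1_E$ and $\iota\circ D$ from $E$ to $J^1(E)$ have the same symbol $\iota$, so their difference
$$\sigma_D\, :=\, j^1_E-\iota\circ D\, :\, E\, \longrightarrow\, J^1(E)$$
is $\mathcal{O}_X$--linear; moreover $p\circ\sigma_D\,=\,\mathrm{Id}_E$, where $p\, :\, J^1(E)\to E$ is the projection in \eqref{ej}, so $\sigma_D$ is an $\mathcal{O}_X$--linear splitting of \eqref{ej}.

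Next I would transport this to $E/S$. The quotient map $q$ in \eqref{eq} is $\mathcal{O}_X$--linear, hence it induces $J^1(q)\, :\, J^1(E)\to J^1(E/S)$ compatible with the universal operators, the projections, and the inclusions: $J^1(q)\circ j^1_E\,=\,j^1_{E/S}\circ q$, and $J^1(q)\circ\iota\,=\,\iota\circ(q\otimes\mathrm{Id}_{K_X})$, and the projections intertwine $q$ and $J^1(q)$. I then set
$$\Psi\, :=\, J^1(q)\circ\sigma_D\, :\, E\, \longrightarrow\, J^1(E/S)\, ,$$
which is $\mathcal{O}_X$--linear, and claim it is the desired isomorphism. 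Using the compatibilities one checks $p\circ\Psi\,=\,q$, so $\Psi$ carries $S\,=\,\ker q$ into $\ker p\,=\,\iota((E/S)\otimes K_X)$ and therefore defines a morphism from the exact sequence $0\to S\to E\xrightarrow{q} E/S\to 0$ to the jet sequence \eqref{ej} for $E/S$, inducing the identity on the common quotient $E/S$.

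The key point is to compute the induced left-hand arrow $\Psi\vert_S\, :\, S\to (E/S)\otimes K_X$. For a local section $s$ of $S$ one has $q(s)\,=\,0$, hence $J^1(q)(j^1_E(s))\,=\,j^1_{E/S}(q(s))\,=\,0$, while $J^1(q)(\iota(D(s)))\,=\,\iota((q\otimes\mathrm{Id}_{K_X})(D(s)))\,=\,\iota(D_S(s))$ by the very definition \eqref{e1} of the second fundamental form. Thus $\Psi\vert_S\,=\,-D_S$, and this is exactly where the $r$--oper hypothesis enters: since $D_S$ is an isomorphism, the left vertical arrow is an isomorphism, the right vertical arrow is $\mathrm{Id}_{E/S}$, and the five lemma forces $\Psi$ to be an isomorphism. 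I expect this symbol computation, together with the bookkeeping of the compatibilities of $J^1(q)$, to be the only real content; the rest is formal. Finally, for the second assertion I would invoke the identification $E/S\,=\,S\otimes TX$ supplied by $D_S$ (obtained from \eqref{ew2} together with $K_X\otimes TX\,=\,\mathcal{O}_X$), which gives $J^1(E/S)\,=\,J^1(S\otimes TX)$ and hence $E\,\cong\, J^1(S\otimes TX)$.
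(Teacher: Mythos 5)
Your proof is correct and takes essentially the same route as the paper: your $\Psi=J^1(q)\circ\sigma_D$ is exactly the paper's map $\varphi$ (the splitting $\sigma_D$ of the jet sequence is just the flat-section construction in operator form), and both arguments conclude by identifying the induced map on $S$ with $\pm D_S$ and invoking the five lemma. The sign $-D_S$ versus the paper's $D_S$ is a harmless matter of convention and does not affect the conclusion.
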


\begin{proof}
Using the flat holomorphic connection $D$ we shall construct a homomorphism
\begin{equation}\label{e3}
\varphi\, :\, E\, \longrightarrow\, J^1(E/S)\, .
\end{equation}
For this, take any point $x\, \in\, X$ and any element
$v\, \in\, E_x$ in the fiber over $x$. Let $\widehat{v}$ be the unique flat section of $E$ (for the
connection $D$), defined
on a simply connected open neighborhood $U\, \subset\, X$ of $x$, such that $\widehat{v}(x)\,
=\,v$. Now restrict the section $q(\widehat{v})\, \in\, H^0(U,\, E/S)$, where
$q$ is the quotient map in \eqref{eq}, to the first order infinitesimal neighborhood
of $x$; let $\widehat{v}'\, \in\, J^1(E/S)_x$ be the element obtained this
way from $q(\widehat{v})$. The map $\varphi$ in \eqref{e3} sends any $v\, \in\, E_x$,
$x\, \in\, X$, to $\widehat{v}'\, \in\, J^1(E/S)_x$ constructed above from it.

The map $\varphi$ in \eqref{e3} fits in the following commutative diagram of homomorphisms
\begin{equation}\label{e4}
\begin{matrix}
0 &\longrightarrow & S &\longrightarrow & E &\stackrel{q}{\longrightarrow} & E/S &\longrightarrow & 0\\
&& \,\,\,\,\Big\downarrow\varphi' && \,\,\,\Big\downarrow\varphi && \Big\Vert\\
0 &\longrightarrow & (E/S)\otimes K_X &\longrightarrow & J^1(E/S) &\longrightarrow &
E/S &\longrightarrow & 0
\end{matrix}
\end{equation}
where $\varphi'$ is the restriction of $\varphi$ to the subbundle $S$, and the exact sequence at
the bottom of \eqref{e4} is the one in \eqref{ej} for $W\,=\, E/S$. It is straightforward to check
that the homomorphism $\varphi'$ in \eqref{e4} actually coincides with the second fundamental
form $D_S$ defined in \eqref{e1}. Since $D_S$ is an isomorphism, from the commutativity of
\eqref{e4} it follows immediately that $\varphi$ constructed in \eqref{e3} is an isomorphism.

Using the isomorphism $E/S\,\stackrel{\sim}{\longrightarrow}\, S\otimes TX$ in \eqref{ew2}, the
isomorphism $\varphi$ identifies $E$ with $J^1(S\otimes TX)$.
\end{proof}

\begin{lemma}\label{lem1b}
Let $(E,\, S,\, D)$ be a $r$--oper on $X$. Let $D^1\, :=\, (\varphi^{-1})^* D$ be the connection on
$J^1(E/S)$, where $\varphi$ is the isomorphism in \eqref{e4}. Then the second fundamental form of
$(E/S)\otimes K_X \,\hookrightarrow \, J^1(E/S)$ for the connection $D^1$ is the identity
map of $(E/S)\otimes K_X$.
\end{lemma}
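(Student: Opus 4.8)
The plan is to exploit the fact that, by its very definition $D^1 \,=\, (\varphi^{-1})^* D$, the map $\varphi$ is an isomorphism of holomorphic vector bundles carrying the connection $D$ on $E$ to the connection $D^1$ on $J^1(E/S)$, and then to transport the second fundamental form of $S$ through this isomorphism. First I would record the precise behaviour of $\varphi$ on the subbundle $S$: the left-hand square of the commutative diagram \eqref{e4} shows that $\varphi$ maps $S$ into the subbundle $(E/S)\otimes K_X\, \subset\, J^1(E/S)$ (the image of $\iota$), with $\varphi\vert_S \,=\, \varphi' \,=\, D_S$; since $D_S$ is an isomorphism (this is the oper condition), $\varphi$ restricts to an isomorphism $S\,\xrightarrow{\sim}\, (E/S)\otimes K_X$. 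From the right-hand square of \eqref{e4}, the isomorphism $\bar\varphi$ induced by $\varphi$ on the quotient bundles, namely $E/S\,\longrightarrow\, J^1(E/S)/\bigl((E/S)\otimes K_X\bigr) \,=\, E/S$, is the identity map $\text{Id}_{E/S}$.

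Next I would invoke the naturality of the second fundamental form under isomorphisms of bundles with connection. If $\Phi\,:\, (V_1,\, \nabla_1)\,\longrightarrow\, (V_2,\, \nabla_2)$ is such an isomorphism mapping a subbundle $T_1\,\subset\, V_1$ isomorphically onto $T_2\,\subset\, V_2$, and $\bar\Phi\,:\, V_1/T_1\,\longrightarrow\, V_2/T_2$ is the induced isomorphism, then the corresponding second fundamental forms of \eqref{e1} are related by $(\text{II})_2 \,=\, (\bar\Phi\otimes \text{Id}_{K_X})\circ (\text{II})_1 \circ (\Phi\vert_{T_1})^{-1}$. This is immediate from unwinding the definition $T_i\,\hookrightarrow\, V_i\,\xrightarrow{\nabla_i}\, V_i\otimes K_X\,\longrightarrow\, (V_i/T_i)\otimes K_X$, together with $\nabla_2 \,=\, (\Phi\otimes \text{Id}_{K_X})\circ\nabla_1\circ\Phi^{-1}$; the twist by $\text{Id}_{K_X}$ on the codomain arises because the connections take values in sections twisted by $K_X$ while $\Phi$ acts only on the bundle factor.

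Applying this with $\Phi \,=\, \varphi$, $\nabla_1 \,=\, D$, $\nabla_2 \,=\, D^1$, $T_1 \,=\, S$ and $T_2 \,=\, (E/S)\otimes K_X$, so that $(\text{II})_1 \,=\, D_S$, $\Phi\vert_{T_1} \,=\, D_S$ and $\bar\Phi \,=\, \text{Id}_{E/S}$, yields that the second fundamental form $(\text{II})_2$ of $(E/S)\otimes K_X$ for $D^1$ equals $(\text{Id}_{E/S}\otimes \text{Id}_{K_X})\circ D_S\circ D_S^{-1} \,=\, \text{Id}_{(E/S)\otimes K_X}$, which is exactly the assertion. I expect the only delicate point to be the bookkeeping of the $K_X$ factor in the naturality identity and the confirmation that the map induced by $\varphi$ on the quotients is genuinely $\text{Id}_{E/S}$ (rather than some nontrivial automorphism); both are read off directly from diagram \eqref{e4}, and everything else is formal.
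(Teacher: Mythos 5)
Your proof is correct and follows essentially the same route as the paper's: the paper's own argument is exactly that the claim "follows immediately from the construction of $\varphi$, the commutativity of the diagram in \eqref{e4} and the fact that $\varphi'$ coincides with $D_S$." You have merely spelled out the naturality of the second fundamental form under an isomorphism of bundles with connection, which the paper leaves implicit.
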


\begin{proof}
This follows immediately from the construction of $\varphi$, the commutativity of the diagram
in \eqref{e4} and the fact that $\varphi'$ coincides with the second fundamental form
$D_S$ defined in \eqref{e1}.
\end{proof}

\begin{proposition}\label{prop1}
Let $F$ be a stable vector bundle on $X$ of rank $r$ and degree zero. Then there is
a holomorphic vector bundle $E$ on $X$ of rank $2r$, and a holomorphic connection $D$
on $E$, such that
\begin{enumerate}
\item $S\, :=\, F\otimes K^{1/2}_X$ is a holomorphic subbundle of $E$, and

\item the triple $(E,\, S,\, D)$ is a stable $r$--oper. In particular, $E\,=\, J^1(S\otimes TX)$ by
Lemma \ref{lem1}.
\end{enumerate}
\end{proposition}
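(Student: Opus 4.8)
The plan is to build the required stable $r$--oper as a tensor product of $F$ with a rank two $1$--oper furnished by a projective structure on $X$, and then to read off the identification $E\,=\,J^1(S\otimes TX)$ from Lemma \ref{lem1}. Two existence inputs are needed. Since $F$ is stable it is indecomposable, and $\text{degree}(F)\,=\,0$; hence by the Atiyah--Weil criterion \cite{At} it admits a holomorphic connection $\nabla^F$. On the other hand $X$ carries a projective structure, which is the same as a holomorphic connection $\nabla^0$ on $V_0\,:=\,J^1(K_X^{-1/2})$ for which the triple $(V_0,\, K_X^{1/2},\, \nabla^0)$ is a $1$--oper; here $K_X^{1/2}\,=\,K_X^{-1/2}\otimes K_X\,\hookrightarrow\, J^1(K_X^{-1/2})$ is the canonical subbundle from \eqref{ej}. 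By the oper condition its second fundamental form is an isomorphism $K_X^{1/2}\,\longrightarrow\,(V_0/K_X^{1/2})\otimes K_X\,=\,K_X^{1/2}$, that is, a nonzero scalar $c$.

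Next I would set $E\,:=\,F\otimes V_0$ with the tensor product connection $D\,:=\,\nabla^F\otimes\text{Id}_{V_0}+\text{Id}_F\otimes\nabla^0$, and $S\,:=\,F\otimes K_X^{1/2}$ using the inclusion $K_X^{1/2}\hookrightarrow V_0$ above. Then $E$ has rank $2r$, $S$ is a subbundle, and $E/S\,=\,F\otimes(V_0/K_X^{1/2})\,=\,F\otimes K_X^{-1/2}$. The one computation to carry out is the second fundamental form $D_S$: for a local section $s\otimes t$ of $S$ with $t$ a section of $K_X^{1/2}$, the term $\nabla^F(s)\otimes t$ maps to zero in $(E/S)\otimes K_X$ because $t\,\in\, K_X^{1/2}$, whereas $s\otimes\nabla^0(t)$ maps to $s$ tensored with the second fundamental form of $K_X^{1/2}\subset V_0$. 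Under the canonical identification $\text{Hom}(S,\, E/S)\otimes K_X\,=\,\text{End}(F)$ this yields $D_S\,=\,c\cdot\text{Id}_F$, which is nonzero and therefore an isomorphism. Hence $(E,\, S,\, D)$ is an $r$--oper.

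Finally, $S\,=\,F\otimes K_X^{1/2}$ is stable because tensoring the stable bundle $F$ by a line bundle preserves stability, so $(E,\, S,\, D)$ is a stable $r$--oper; and Lemma \ref{lem1} applied to it provides the canonical isomorphism $E\,\cong\,J^1(E/S)\,=\,J^1(F\otimes K_X^{-1/2})\,=\,J^1(S\otimes TX)$ asserted in the statement. I expect the only genuinely delicate point to be the existence of a holomorphic connection on $E$, and the tensor product device is designed precisely to avoid checking the Atiyah--Weil criterion for the jet bundle $J^1(F\otimes K_X^{-1/2})$ directly: it reduces existence to the two classical facts used at the outset, while the simplicity $H^0(X,\,\text{End}(F))\,=\,\mathbb{C}\cdot\text{Id}_F$ of the stable bundle $F$ forces the second fundamental form to be scalar, which the explicit computation pins down as the nonzero constant $c$.
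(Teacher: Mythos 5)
Your proposal is correct and follows essentially the same route as the paper: both take $E\,=\,F\otimes J^1(K^{-1/2}_X)$ with the tensor product of a connection on $F$ and one on $J^1(K^{-1/2}_X)$, and both identify the second fundamental form of $S\,=\,F\otimes K^{1/2}_X$ as a nonzero scalar multiple of $\mathrm{Id}_F$. The only cosmetic difference is that you obtain the connection on $J^1(K^{-1/2}_X)$ and the nonvanishing of its second fundamental form from a projective structure (the $1$--oper property), whereas the paper takes an arbitrary connection on this indecomposable degree-zero bundle via Atiyah--Weil and deduces nonvanishing from the fact that $K^{1/2}_X$, having nonzero degree, admits no holomorphic connection.
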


\begin{proof}
The stable vector bundle $F$ of degree zero admits a holomorphic connection \cite[p.~203, Proposition 19]{At},
\cite{We} (in fact $F$ admits a unique holomorphic connection whose
monodromy representation is unitary \cite{NS}). Fix a holomorphic connection
$D^F$ on $F$. Now consider the holomorphic vector bundle 
$$
E\,:=\, F\otimes J^1(K^{-1/2}_X)\, .
$$
Since $J^1(K^{-1/2}_X)$ is indecomposable of degree zero, it admits a holomorphic
connection \cite[p.~203, Proposition 19]{At}, \cite{We}. In fact,
a projective structure on $X$ produces a holomorphic connection on $J^1(K^{-1/2}_X)$ \cite{Gu};
the definition of projective structure is recalled in Section \ref{se9.1}. Let $D^J$ be
a holomorphic connection on $J^1(K^{-1/2}_X)$. Consider the natural inclusion map
\begin{equation}\label{ei}
K^{1/2}_X\,=\, K^{-1/2}_X\otimes K_X\, \xhookrightarrow{\iota}\, J^1(K^{-1/2}_X)
\end{equation}
(see \eqref{ej}). Let
\begin{equation}\label{edj}
D^J(K^{1/2}_X)\, :\, K^{1/2}_X \, \longrightarrow\, K^{-1/2}_X\otimes K_X\,=\, K^{1/2}_X
\end{equation}
be the second fundamental form of this subbundle $K^{1/2}_X\, \hookrightarrow\,
J^1(K^{-1/2}_X)$ for the above connection $D^J$ on $J^1(K^{-1/2}_X)$. This homomorphism
$D^J(K^{1/2}_X)$ in \eqref{edj} is a nonzero constant scalar multiple of the identity map of $K^{1/2}_X$;
the scalar is nonzero because $K^{1/2}_X$ does not admit any holomorphic connection as its degree is nonzero.

The holomorphic connections
$D^F$ and $D^J$ together produce the holomorphic connection
\begin{equation}\label{edj2}
D\, :=\, D^F\otimes{\rm Id}_{J^1(K^{-1/2}_X)}+ {\rm Id}_F\otimes D^J
\end{equation}
on $F\otimes J^1(K^{-1/2}_X)\,=\, E$. The inclusion map $\iota$ in \eqref{ei}
produces an inclusion map
$$
S\, :=\, F\otimes K^{1/2}_X \, 
\xhookrightarrow{{\rm Id}_F\otimes\iota}\, F\otimes J^1(K^{-1/2}_X)\,=\, E\, .
$$
The second fundamental form $D_S$ (see \eqref{e1}) of $S$ for the connection $D$ in \eqref{edj2} coincides with
$$
\text{Id}_F\otimes D^J(K^{1/2}_X)\, :\, F\otimes K^{1/2}_X\, \longrightarrow\,
F\otimes (K^{-1/2}_X\otimes K_X)\,=\, F\otimes K^{1/2}_X\, ,
$$
where $D^J(K^{1/2}_X)$ is the homomorphism in \eqref{edj}.
Since $D^J(K^{1/2}_X)$ is a nonzero scalar multiple of the identity
map of $K^{1/2}_X$, we now conclude that $D_S$ is a nonzero scalar multiple of the identity map of
$F\otimes K^{1/2}_X$, in particular, $D_S$ is an isomorphism. Therefore, the triple $(E,\, S,\, D)$ is
a stable $r$--oper on $X$.
\end{proof}

\begin{proposition}\label{prop-1}
Let $F$ be a stable vector bundle on $X$ of degree zero, and let $V$ be any holomorphic
vector bundle on $X$. Then there is a holomorphic isomorphism
$$
h\, :\,F\otimes J^1(V)\, \longrightarrow\, J^1(F\otimes V)
$$
such that the exact sequence in \eqref{ej} for $W\,=\, F\otimes V$ coincides with
the exact sequence in \eqref{ej} for $W\,=\,V$ tensored with $F$.
\end{proposition}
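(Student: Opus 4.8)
The plan is to exploit the fact that $F$, being stable of degree zero, carries a holomorphic connection $D^F\, :\, F\,\longrightarrow\, F\otimes K_X$ (this existence was already cited in the proof of Proposition \ref{prop1}), and to build $h$ out of $D^F$. Conceptually, the existence of $D^F$ says that the Atiyah class of $F$ vanishes, and since the Atiyah class of a tensor product satisfies $\mathrm{at}(F\otimes V)\,=\,\mathrm{at}(F)\otimes\mathrm{Id}_V+\mathrm{Id}_F\otimes\mathrm{at}(V)$, the two extensions of $F\otimes V$ by $(F\otimes V)\otimes K_X$ given by the jet sequence \eqref{ej} for $W\,=\,F\otimes V$ and by $F$ tensored with \eqref{ej} for $W\,=\,V$ carry the same extension class; hence they are isomorphic as extensions. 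I will make this isomorphism explicit, and it is cleaner to construct its inverse $J^1(F\otimes V)\,\longrightarrow\, F\otimes J^1(V)$ using the universal property of the first jet bundle.

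Let $d^1_V\, :\, V\,\longrightarrow\, J^1(V)$ denote the universal first order operator, so $d^1_V(gs)\,=\,g\,d^1_V(s)+\iota(s\otimes dg)$ for local functions $g$ and sections $s$, where $\iota$ is the inclusion in \eqref{ej}. First I would define a first order differential operator
$$
\Phi\, :\, F\otimes V\,\longrightarrow\, F\otimes J^1(V)
$$
on decomposable local sections by
$$
\Phi(f\otimes v)\,=\, f\otimes d^1_V(v)+(\mathrm{Id}_F\otimes\iota)(D^F f\otimes v)\, ,
$$
where $D^F f\otimes v\,\in\, F\otimes K_X\otimes V\,=\,F\otimes(V\otimes K_X)$ after the canonical reordering of factors. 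The one genuine verification is that $\Phi$ is well defined over $\mathcal{O}_X$, i.e.\ that $\Phi(gf\otimes v)\,=\,\Phi(f\otimes gv)$; this follows from the Leibniz rule $D^F(gf)\,=\,g\,D^F f+f\otimes dg$ together with the Leibniz rule for $d^1_V$ recalled above, the two correction terms $f\otimes\iota(v\otimes dg)$ matching up. The same computation shows that $\Phi$ is of order one, with symbol the inclusion $\mathrm{Id}_F\otimes\iota\, :\, (F\otimes V)\otimes K_X\,\longrightarrow\, F\otimes J^1(V)$.

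By the universal property of $J^1(F\otimes V)$, the operator $\Phi$ factors uniquely as $\Phi\,=\,\widetilde\Phi\circ d^1_{F\otimes V}$ for an $\mathcal{O}_X$--linear homomorphism $\widetilde\Phi\, :\, J^1(F\otimes V)\,\longrightarrow\, F\otimes J^1(V)$. To finish I would check that $\widetilde\Phi$ is a homomorphism of the two short exact sequences in question: its restriction to the subbundle $(F\otimes V)\otimes K_X\,\subset\, J^1(F\otimes V)$ equals the symbol $\mathrm{Id}_F\otimes\iota$, hence is the identity under the identification of the two subbundles, while composing $\widetilde\Phi$ with the projection $F\otimes J^1(V)\,\longrightarrow\, F\otimes V$ recovers the projection $J^1(F\otimes V)\,\longrightarrow\, F\otimes V$, since $d^1_V$ and $d^1_{F\otimes V}$ split the respective projections and $\iota$ lands in the kernel. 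Thus $\widetilde\Phi$ induces the identity on both the sub and the quotient, so by the five lemma it is an isomorphism, and $h\,:=\,\widetilde\Phi^{-1}$ is the desired isomorphism identifying the two exact sequences.

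There is no real obstacle beyond the bookkeeping in the middle paragraph; the only point worth flagging is that $h$ is not canonical but depends on the chosen connection $D^F$ (two choices differ by a section of $\mathrm{End}(F)\otimes K_X$, which alters $h$ by the corresponding element of $H^0(X,\,\mathrm{End}(F\otimes V)\otimes K_X)$). For the existence statement of the proposition any choice of $D^F$ serves, so this ambiguity is harmless.
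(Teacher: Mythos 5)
Your proof is correct and rests on the same essential input as the paper's: a choice of holomorphic connection $D^F$ on $F$, which exists because $F$ is stable of degree zero. The paper defines $h$ directly by restricting $D^F$--flat sections $\widehat{v}$ to first order infinitesimal neighborhoods (so $v\otimes w\,\longmapsto\, \widehat{v}'\otimes w$), whereas you build the inverse map as the $\mathcal{O}_X$--linearization of the first order operator $f\otimes v\,\longmapsto\, f\otimes d^1_V(v)+(\mathrm{Id}_F\otimes\iota)(D^F f\otimes v)$; these are the same isomorphism written in two equivalent ways, and your remark that $h$ depends on the choice of $D^F$ applies equally to the paper's construction.
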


\begin{proof}
As in the proof of Proposition \ref{prop1}, fix a holomorphic connection
$D^F$ on $F$. For any point $x\, \in\, X$, and any $v\, \in\, F_x$, let $\widehat{v}$
be the unique flat section of $F$, defined on a simply connected open neighborhood of
$x\, \in\, X$, such that $\widehat{v}(x)\,=\, x$ (see the proof of Lemma \ref{lem1}).
Take any element $w\, \in\, J^1(V)_x$, so $w$ is a section of $V$ defined on the first
order infinitesimal neighborhood of $x$. Therefore, $\widehat{v}'\otimes w$ 
is a section of $F\otimes V$ defined on the first order infinitesimal neighborhood of $x$,
where $\widehat{v}'$ is the restriction of $\widehat{v}$ to the first
order infinitesimal neighborhood of $x$. Now we have a map
$$
h(x)\, :\, F_x\otimes J^1(V)_x\, \longrightarrow\, J^1(F\otimes V)_x
$$
that sends any $v\otimes w\, \in\, F_x\otimes J^1(V)_x$ to the element $\widehat{v}'\otimes w$ 
of $J^1(F\otimes V)_x$ constructed above. It is straightforward to check that $h(x)$ is an
isomorphism. Moreover, we get a holomorphic isomorphism 
$$
h\, :\,F\otimes J^1(V) \, \longrightarrow\, J^1(F\otimes V)
$$
which coincides with $h(x)$ for any $x\,\in\, X$.

{}From the above construction of $h$ we conclude that
the following diagram of homomorphisms is commutative
\begin{equation}\label{cdn}
\begin{matrix}
0 &\longrightarrow & F\otimes (V\otimes K_X) &\longrightarrow & F\otimes J^1(V) & \longrightarrow &
F\otimes V &\longrightarrow & 0\\
&& \Big\Vert && \,\,\Big\downarrow h && \Big\Vert\\
0 &\longrightarrow & (F\otimes V)\otimes K_X &\longrightarrow & J^1(F\otimes V) &\longrightarrow &
F\otimes V &\longrightarrow & 0
\end{matrix}
\end{equation}
where the top one is the exact sequence in \eqref{ej} for $W\,=\,V$ tensored with $F$ and the
bottom one is the exact sequence in \eqref{ej} for $W\,=\, F\otimes V$. This completes the proof.
\end{proof}

For $F$ as in Proposition \ref{prop-1}, consider the short exact sequence
$$
0 \, \longrightarrow\, F\otimes K^{-1/2}_X\otimes K_X\,=\,F\otimes K^{1/2}_X
\,\longrightarrow\, J^1(F\otimes K^{-1/2}_X) \, \longrightarrow\, F\otimes K^{-1/2}_X \, \longrightarrow\,0
$$
in \eqref{ej} for $W\,=\, F\otimes K^{-1/2}_X$. It corresponds to an extension class
\begin{equation}\label{c1}
c(F)\,\in\, H^1(X,\, \text{Hom}(F\otimes K^{-1/2}_X,\, F\otimes K^{1/2}_X))\,=\, 
H^1(X,\, \text{End}(F)\otimes K_X)\,.
\end{equation}
Since $H^1(X,\, K_X)\,=\, H^0(X,\, {\mathcal O}_X)^*\,=\, {\mathbb C}^*\,=\, \mathbb C$, we have
\begin{equation}\label{c2}
\text{Id}_F\otimes 1\, \in\, H^0(X,\, \text{End}(F))\otimes H^1(X,\, K_X)\, \hookrightarrow\,
H^1(X,\, \text{End}(F)\otimes K_X)\, ;
\end{equation}
the above map $H^0(X,\, \text{End}(F))\otimes H^1(X,\, K_X)\, \longrightarrow\,
H^1(X,\, \text{End}(F)\otimes K_X)$ is injective because $H^0(X,\, \text{End}(F))\,=\, {\mathbb C}$.

\begin{corollary}\label{cor-1}
Let $F$ be a stable vector bundle on $X$ of degree zero. Then the cohomology class $c(F)$ in \eqref{c1}
coincides with ${\rm Id}_F\otimes 1$ in \eqref{c2}.
\end{corollary}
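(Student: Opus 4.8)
The plan is to exploit Proposition \ref{prop-1} to recognize the extension \eqref{c1} defining $c(F)$ as nothing but the jet sequence of the line bundle $K^{-1/2}_X$ tensored by $F$, and then to read off the cohomology class from the functoriality of extension classes under the exact functor $F\otimes(-)$.

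First I would apply Proposition \ref{prop-1} with $V\,=\, K^{-1/2}_X$. The commutative diagram \eqref{cdn} then identifies the short exact sequence \eqref{ej} for $W\,=\,F\otimes K^{-1/2}_X$, whose extension class is $c(F)$, with the sequence \eqref{ej} for $W\,=\,K^{-1/2}_X$ tensored by $F$. Let $e\,\in\, \text{Ext}^1(K^{-1/2}_X,\, K^{1/2}_X)\,=\, H^1(X,\, K_X)$ denote the extension class of \eqref{ej} for $W\,=\,K^{-1/2}_X$. Since tensoring an extension by the fixed bundle $F$ is the effect of the functor $F\otimes(-)$, and since on the level of the $\text{Hom}$--sheaves the canonical identification $\text{Hom}(F\otimes K^{-1/2}_X,\, F\otimes K^{1/2}_X)\,=\,\text{End}(F)\otimes K_X$ carries this functor to the map $\phi\,\longmapsto\, \text{Id}_F\otimes\phi$, the induced map on $H^1$ is precisely the inclusion $\text{Id}_F\otimes H^1(X,\,K_X)\,\hookrightarrow\, H^1(X,\, \text{End}(F)\otimes K_X)$ of \eqref{c2}. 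Consequently $c(F)\,=\,\text{Id}_F\otimes e$. The point to check carefully here, and the step I expect to be the main obstacle, is exactly this compatibility: that the $\text{End}(F)$--component of $c(F)$ is literally $\text{Id}_F$, rather than merely proportional to it. This is what the naturality in Proposition \ref{prop-1}, encoded by the two equalities in the outer columns of \eqref{cdn}, guarantees.

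It then remains to identify $e$ with the generator $1\,\in\, H^1(X,\,K_X)$. The sequence \eqref{ej} for $W\,=\,K^{-1/2}_X$ admits a holomorphic splitting if and only if $K^{-1/2}_X$ carries a holomorphic connection, which in turn forces $\deg(K^{-1/2}_X)\,=\,0$; but $\deg(K^{-1/2}_X)\,=\,1-g\,\neq\, 0$ for $g\,\geq\,2$, so $e\,\neq\,0$. As $H^1(X,\,K_X)\,\cong\,\mathbb C$ is one--dimensional, the nonzero class $e$ is a generator, and under the identification fixed in \eqref{c2} it is the class $1$, whence $c(F)\,=\,\text{Id}_F\otimes 1$. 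The only delicate point in this last step is the pinning down of the precise scalar, i.e. the normalization implicit in writing $H^1(X,\,K_X)\,=\,\mathbb C$; the structural content that matters for the sequel, namely that $c(F)$ is a nonzero multiple of $\text{Id}_F\otimes 1$, is already secured by the vanishing analysis above.
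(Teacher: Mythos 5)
Your proposal follows the same route as the paper: both reduce to the rank--one case via Proposition \ref{prop-1} and the commutativity of \eqref{cdn}, and both use the fact that tensoring the extension $0\to K^{1/2}_X\to J^1(K^{-1/2}_X)\to K^{-1/2}_X\to 0$ with $F$ sends its class $e\in H^1(X,K_X)$ to $\mathrm{Id}_F\otimes e$ under the inclusion in \eqref{c2}. The one point of divergence is the identification of $e$ itself. The paper simply quotes \cite{Gu} for the equality $e=1$ in $H^1(X,K_X)=\mathbb C$, whereas you argue self-containedly that $e\neq 0$ (the jet sequence of $K^{-1/2}_X$ splits if and only if $K^{-1/2}_X$ carries a holomorphic connection, which is impossible since $\deg K^{-1/2}_X=1-g\neq 0$) and then invoke one--dimensionality of $H^1(X,K_X)$. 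That nonvanishing argument is correct and is exactly the style of reasoning the paper itself uses in the proof of Proposition \ref{prop1}. However, as you yourself flag, it only yields $c(F)=\lambda\cdot(\mathrm{Id}_F\otimes 1)$ for some $\lambda\in\mathbb C^*$, not the literal equality $c(F)=\mathrm{Id}_F\otimes 1$: the precise scalar depends on the normalization of the isomorphism $H^1(X,K_X)\cong\mathbb C$, and pinning it to $1$ requires either the computation in \cite{Gu} or an explicit \v{C}ech cocycle calculation that you do not supply. So your argument captures all the structural content that is used downstream (a nonzero multiple of the identity), but to prove the corollary as stated you still need the normalization input that the paper obtains by citation.
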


\begin{proof}
Consider the short exact sequence
$$
0 \, \longrightarrow\, K^{-1/2}_X\otimes K_X\,=\, K^{1/2}_X
\,\longrightarrow\, J^1(K^{-1/2}_X) \, \longrightarrow\, K^{-1/2}_X \, \longrightarrow\,0
$$
in \eqref{ej} for $W\,=\, K^{-1/2}_X$. The corresponding extension class coincides with
$$
1\, \in\, H^1(X,\, \text{Hom}(K^{-1/2}_X,\, K^{1/2}_X))\,=\, H^1(X,\, K_X)\,=\, {\mathbb C}
$$
\cite{Gu}. Hence the extension class of the tensor product of the above exact sequence with $F$
$$
0 \, \longrightarrow\, F\otimes K^{1/2}_X
\,\longrightarrow\, F\otimes J^1(K^{-1/2}_X) \, \longrightarrow\, F\otimes K^{-1/2}_X \, \longrightarrow\,0
$$
is $\text{Id}_F\otimes 1 \, \in\, H^1(X,\, \text{End}(F)\otimes K_X)\,=\, H^1(X,\,\text{Hom}(F\otimes K^{-1/2}_X,\,
F\otimes K^{1/2}_X))$ in \eqref{c2}. Therefore, the proof is completed using the isomorphism $h$ in
\eqref{cdn} together with the commutativity of the diagram in \eqref{cdn}.
\end{proof}

\subsection{An equivalence relation}\label{sec2.2}

Take a holomorphic vector bundle $F$ on $X$ of rank $r$ and degree zero; it need not be stable. Assume
that the jet bundle $J^1(F\otimes K^{-1/2}_X)$ has a holomorphic connection $D_1$. Consider the subbundle
$$
F\otimes K^{1/2}_X\,=\,F\otimes K^{-1/2}_X\otimes K_X\, \subset\, J^1(F\otimes K^{-1/2}_X)
$$
(see \eqref{ej}). The second fundamental form of this subbundle for the connection $D_1$ is a homomorphism
\begin{equation}\label{e7}
{\mathcal S}(F, D_1)\, :\, F\otimes K^{1/2}_X \, \longrightarrow\, (J^1(F\otimes K^{-1/2}_X)/(F\otimes K^{1/2}_X))
\otimes K_X\,=\, F\otimes K^{-1/2}_X\otimes K_X\,=\, F\otimes K^{1/2}_X
\end{equation}
(see \eqref{e1} and \eqref{ej}).

Now assume that $J^1(F\otimes K^{-1/2}_X)$ admits a holomorphic connection $D_1$ for which the
second fundamental form ${\mathcal S}(F, D_1)$ in \eqref{e7} is the identity map of $F\otimes K^{1/2}_X$.
Note that for such a holomorphic connection $D_1$, the triple $(J^1(F\otimes K^{-1/2}_X),\,
F\otimes K^{1/2}_X,\, D_1)$ is a $r$--oper (see Definition \ref{defro}, \eqref{ew} and \eqref{ew2}).

Let
\begin{equation}\label{c0}
{\rm Conn}(J^1(F\otimes K^{-1/2}_X))
\end{equation}
denote the space of all holomorphic connection on the vector bundle $J^1(F\otimes K^{-1/2}_X)$.

\begin{definition}\label{def2}
Let $$\widetilde{\mathcal C}(F)\, \subset\, {\rm Conn}(J^1(F\otimes K^{-1/2}_X))$$ be the
space of all holomorphic connections $D$ on
$J^1(F\otimes K^{-1/2}_X)$ with the property that the corresponding second fundamental form
${\mathcal S}(F, D)$ in \eqref{e7} is the identity map of $F\otimes K^{1/2}_X$.
\end{definition}

We note that $\widetilde{\mathcal C}(F)$ is nonempty by the assumption on $F$.

So $(J^1(F\otimes K^{-1/2}_X),\, F\otimes K^{1/2}_X,\, D)$ is a $r$--oper for every
$D\, \in\, \widetilde{\mathcal C}(F)$.

We shall see in Corollary \ref{cor1} that $\widetilde{\mathcal C}(F)\, =\,
{\rm Conn}(J^1(F\otimes K^{-1/2}_X))$ when the vector bundle $F$ is stable.

For notational convenience, define
\begin{equation}\label{e8}
S\,:=\, F\otimes K^{1/2}_X\ \ \text{ and }\ \ E\, :=\, J^1(F\otimes K^{-1/2}_X)
\end{equation}
(see \eqref{ew}, \eqref{ew2} and Lemma \ref{lem1}), so $S$ is a holomorphic subbundle of $E$ by \eqref{ej}
(see \eqref{e8b} below).
Note that we have $\text{End}(S)\,=\, \text{End}(F)\otimes\text{End}(K^{1/2}_X)\,=\, \text{End}(F)$.

Setting $W\,=\, F\otimes K^{-1/2}_X$ in \eqref{ej}, we get a short exact sequence
\begin{equation}\label{e8b}
0\, \longrightarrow\, F\otimes K^{-1/2}_X\otimes K_X\,=\, S\, \stackrel{\iota}{\longrightarrow}\, 
J^1(F\otimes K^{-1/2}_X)\,=\, E\, \stackrel{q_0}{\longrightarrow}\, F\otimes K^{-1/2}_X\,=\, S\otimes TX
\, \longrightarrow\, 0
\end{equation}
(see \eqref{e8}). Consequently, there is a natural inclusion map
\begin{equation}\label{e5}
\psi\, :\, H^0(X,\, \text{End}(S)\otimes K^{\otimes 2}_X)\,=\,
H^0(X,\, \text{Hom}(S\otimes TX,\, S\otimes K_X))
\, \hookrightarrow\, H^0(X,\, \text{End}(E)\otimes K_X)
\end{equation}
that sends any homomorphism $\alpha\, :\, S\otimes TX \, \longrightarrow\, S\otimes K_X$ to the following
composition of homomorphisms
$$
E \, \stackrel{q_0}{\longrightarrow}\, S\otimes TX \, \stackrel{\alpha}{\longrightarrow}\, S\otimes K_X
\, \xrightarrow{\iota\otimes{\rm Id}_{K_X}}\, E\otimes K_X\, ,
$$
where $q_0$ and $\iota$ are the homomorphisms in \eqref{e8b}.

The space of holomorphic connections on $E$ is an affine space modeled on
the vector space $H^0(X,\, \text{End}(E)\otimes K_X)$; recall that $E$ is assumed to admit a holomorphic
connection. In view of the homomorphism $\psi$ in \eqref{e5}, for any holomorphic connection $D$ on $E$ and
any $\alpha\, \in\, H^0(X,\, \text{End}(S)\otimes K^{\otimes 2}_X)$, we get a holomorphic connection
$D+\psi(\alpha)$ on $E$. If
$D\, \in\,\widetilde{\mathcal C}(F)$ (see Definition \ref{def2}), then it can be shown that
\begin{equation}\label{ejn}
D+\psi(\alpha)\, \in\, \widetilde{\mathcal C}(F)\,.
\end{equation}
Indeed, the restrictions of $D$ and $D+\psi(\alpha)$ to the subbundle $S$ in \eqref{e8b} coincide. Therefore,
the second fundamental form of $S$ for the connection $D$ coincides with the
second fundamental form of $S$ for the connection $D+\psi(\alpha)$. Hence \eqref{ejn}
holds. Consequently, the vector space
$H^0(X,\, \text{End}(S)\otimes K^{\otimes 2}_X)$ acts on $\widetilde{\mathcal C}(F)$. Let
\begin{equation}\label{ea}
\widetilde{\mathcal C}(F)\times H^0(X,\, \text{End}(S)\otimes K^{\otimes 2}_X)
\,\longrightarrow\, \widetilde{\mathcal C}(F)
\end{equation}
be this action. The action in \eqref{ea}
is evidently free; however, the action is not transitive (the dimension of
$H^0(X,\, \text{End}(S)\otimes K^{\otimes 2}_X)$ is smaller than that of $\widetilde{\mathcal C}(F)$).

Let $\text{ad}(S)\, \subset\, \text{End}(S)$ be the subbundle of rank $r^2-1$ given by the sheaf of
endomorphisms of $S$ of trace zero. Note that we have
\begin{equation}\label{e10}
\text{End}(S)\,=\, \text{ad}(S)\oplus {\mathcal O}_X\, ;
\end{equation}
the inclusion map ${\mathcal O}_X\, \hookrightarrow\, \text{End}(S)$ sends a locally defined
holomorphic function $f$ on $X$
to the locally defined endomorphism of $S$ that maps any locally defined section $s$ of $E$ to $f\cdot s$. It
is evident that $\text{ad}(S)\,=\, \text{ad}(F)$ (see \eqref{e8}). The decomposition of
$\text{End}(S)$ in \eqref{e10} produces a decomposition
\begin{equation}\label{e6}
H^0(X,\, \text{End}(S)\otimes K^{\otimes 2}_X)\, =\, H^0(X,\, \text{ad}(S)\otimes K^{\otimes 2}_X)
\oplus H^0(X,\,K^{\otimes 2}_X)\, .
\end{equation}
Consider the action of $H^0(X,\, \text{End}(S)\otimes K^{\otimes 2}_X)$ on $\widetilde{\mathcal C}(F)$ in \eqref{ea}.
In view of \eqref{e6}, from this action we obtain an action of $H^0(X,\, \text{ad}(S)\otimes K^{\otimes 2}_X)$ on
$\widetilde{\mathcal C}(F)$.

\begin{definition}\label{def3}
Define ${\mathcal C}(F)$ to be the quotient space $\widetilde{\mathcal C}(F)\big{/}
H^0(X,\, \text{ad}(S)\otimes K^{\otimes 2}_X)$ for the above action of $H^0(X,\,
\text{ad}(S)\otimes K^{\otimes 2}_X)$ on the space $\widetilde{\mathcal C}(F)$ in Definition \ref{def2}.
\end{definition}

{}From \eqref{e6} it follows immediately that $H^0(X,\,K^{\otimes 2}_X)$ acts freely on
${\mathcal C}(F)$. It is evident that
$$
{\mathcal C}(F)\big{/}H^0(X,\,K^{\otimes 2}_X)\,=\, \widetilde{\mathcal C}(F)\big{/}
H^0(X,\, \text{End}(S)\otimes K^{\otimes 2}_X)\, .
$$

We have a homomorphism
\begin{equation}\label{vp}
\varpi\, :\, \text{End}(F)\otimes K_X\,=\, \text{Hom}(F\otimes K^{-1/2}_X,\, F\otimes K^{1/2}_X) 
\,\longrightarrow\, \text{End}(J^1(F\otimes K^{-1/2}_X))
\end{equation}
that sends any locally defined homomorphism $\eta\, :\, F\otimes K^{-1/2}_X\, \longrightarrow\,
F\otimes K^{1/2}_X$ to the following composition of (locally defined) homomorphisms:
$$
J^1(F\otimes K^{-1/2}_X)\, \stackrel{q_0}{\longrightarrow}\, F\otimes K^{-1/2}_X
\, \stackrel{\eta}{\longrightarrow}\, F\otimes K^{1/2}_X \,\stackrel{\iota}{\longrightarrow}\,
J^1(F\otimes K^{-1/2}_X)\, ,
$$
where $\iota$ and $q_0$ are the homomorphisms in \eqref{e8b}.

\begin{lemma}\label{lem4}
Let $F$ be a stable vector bundle on $X$ of rank $r$ and degree zero. The homomorphism
$$
\widehat{\varpi}\, :\, H^0(X,\, {\rm End}(F)\otimes K_X)\oplus {\mathbb C}\,\longrightarrow\,
H^0\big(X,\, {\rm End}(J^1(F\otimes K^{-1/2}_X))\big)\, ,
$$
that sends any $(v,\, c)\, \in\, H^0(X,\, {\rm End}(F)\otimes K_X)\oplus {\mathbb C}$ to
$\varpi(v)+c\cdot{\rm Id}_{J^1(F\otimes K^{-1/2}_X)}$, where $\varpi$ is
the homomorphism in \eqref{vp}, is an isomorphism.
\end{lemma}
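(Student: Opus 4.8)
The plan is to compute $H^0\big(X,\, {\rm End}(E)\big)$ directly, where $E\,=\, J^1(F\otimes K^{-1/2}_X)$, and to exhibit the image of $\widehat{\varpi}$ as all of it, using the filtration of ${\rm End}(E)$ induced by the subbundle $S\,=\, F\otimes K^{1/2}_X$ of \eqref{e8b}. Set $Q\,:=\, F\otimes K^{-1/2}_X\,=\, S\otimes TX$, the quotient in \eqref{e8b}, so that ${\rm Hom}(Q,\, S)\,=\, {\rm End}(F)\otimes K_X$ and ${\rm Hom}(S,\, Q)\,=\, {\rm End}(F)\otimes TX$, while ${\rm End}(S)\,=\, {\rm End}(Q)\,=\, {\rm End}(F)$. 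Since $F$ is stable it is simple, so $H^0(X,\, {\rm End}(F))\,=\, {\mathbb C}$; moreover ${\rm End}(F)$ is semistable of degree zero, hence ${\rm End}(F)\otimes TX$ is semistable of negative degree (as $\deg TX\,=\, 2-2g\,<\,0$), and therefore $H^0(X,\, {\rm End}(F)\otimes TX)\,=\, 0$.

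First I would show that every global endomorphism of $E$ preserves $S$. Given $\phi\,\in\, H^0(X,\, {\rm End}(E))$, the composite $q_0\circ\phi\circ\iota\,:\, S\,\longrightarrow\, Q$ is a section of ${\rm Hom}(S,\, Q)\,=\, {\rm End}(F)\otimes TX$, hence vanishes by the computation above; thus $\phi(S)\,\subseteq\, \ker q_0\,=\, S$. Consequently restriction to $S$ together with the induced endomorphism of $Q$ defines a linear map
$$
\rho\,:\, H^0(X,\, {\rm End}(E))\,\longrightarrow\, H^0(X,\, {\rm End}(S))\oplus H^0(X,\, {\rm End}(Q))\,=\, {\mathbb C}\cdot{\rm Id}_S\oplus {\mathbb C}\cdot{\rm Id}_Q\, .
$$
The kernel of $\rho$ consists of those $\phi$ that kill $S$ and send $E$ into $S$; such a $\phi$ factors as $\iota\circ g\circ q_0$ with $g\,\in\, H^0(X,\, {\rm Hom}(Q,\, S))\,=\, H^0(X,\, {\rm End}(F)\otimes K_X)$, which is exactly $\varpi(g)$ for the map $\varpi$ of \eqref{vp}. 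Hence $\ker\rho\,=\, \varpi\big(H^0(X,\, {\rm End}(F)\otimes K_X)\big)$, and $\varpi$ maps $H^0(X,\, {\rm End}(F)\otimes K_X)$ isomorphically onto it.

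Next I would determine the image of $\rho$. Since $\rho({\rm Id}_E)\,=\, ({\rm Id}_S,\, {\rm Id}_Q)$, the diagonal line lies in the image. Conversely, if $\rho(\phi)\,=\, (a\,{\rm Id}_S,\, b\,{\rm Id}_Q)$, then $\phi-a\,{\rm Id}_E$ kills $S$ and induces $(b-a){\rm Id}_Q$ on $Q$, so it factors as $\widetilde{g}\circ q_0$ with $q_0\circ\widetilde{g}\,=\, (b-a){\rm Id}_Q$; were $a\,\neq\, b$, then $(b-a)^{-1}\widetilde{g}$ would split the short exact sequence \eqref{e8b}. This is impossible, because by Corollary \ref{cor-1} the extension class of \eqref{e8b} is ${\rm Id}_F\otimes 1\,\neq\, 0$. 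Therefore $a\,=\, b$, and the image of $\rho$ is precisely the diagonal ${\mathbb C}\cdot({\rm Id}_S,\, {\rm Id}_Q)$.

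Combining these, the exact sequence $0\,\to\, \ker\rho\,\to\, H^0(X,\, {\rm End}(E))\,\xrightarrow{\rho}\, {\mathbb C}\cdot({\rm Id}_S,\, {\rm Id}_Q)\,\to\, 0$ is split by ${\rm Id}_E$, giving
$$
H^0(X,\, {\rm End}(E))\,=\, \varpi\big(H^0(X,\, {\rm End}(F)\otimes K_X)\big)\oplus {\mathbb C}\cdot{\rm Id}_E\, .
$$
Since ${\rm Id}_E\,\notin\, \ker\rho$, this direct sum decomposition is exactly the image of $\widehat{\varpi}$ and shows that $\widehat{\varpi}$ is injective; hence it is an isomorphism. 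The main obstacle is the determination of the image of $\rho$: its content is precisely that $E$ is a non-split extension of $Q$ by $S$, which is supplied by Corollary \ref{cor-1}. Without the non-vanishing of that extension class the map $\widehat{\varpi}$ would fail to be surjective, so this is the crucial input of the argument.
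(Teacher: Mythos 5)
Your proof is correct and follows essentially the same route as the paper's: both arguments show that any global endomorphism of $J^1(F\otimes K^{-1/2}_X)$ preserves the subbundle $F\otimes K^{1/2}_X$ (via the vanishing of $H^0(X,\,{\rm End}(F)\otimes TX)$, which is the paper's slope comparison in disguise), reduce to scalar action on the sub and quotient by simplicity, and use the non-splitness of \eqref{e8b} to force those scalars to agree and the remainder to lie in the image of $\varpi$. The only difference is that you certify non-splitness by the nonzero extension class of Corollary \ref{cor-1}, whereas the paper deduces it from the fact that $F\otimes K^{-1/2}_X$ has nonzero degree and hence admits no holomorphic connection; both justifications are valid.
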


\begin{proof}
The holomorphic vector bundle $F\otimes K^{-1/2}_X$ does not admit any holomorphic connection because
$\text{degree}(F\otimes K^{-1/2}_X)\,=\, r(1-g)\, \not=\, 0$ \cite[p.~202, Proposition 18(i)]{At}
(recall that $g\, \geq\, 2$). The
statement that $F\otimes K^{-1/2}_X$ does not admit any holomorphic connection is equivalent to the
statement that the short exact sequence in \eqref{e8b} does not split holomorphically \cite{At}.
Indeed, a holomorphic splitting homomorphism $$D_1\, :\,J^1(F\otimes K^{-1/2}_X)\, \longrightarrow\, 
(F\otimes K^{-1/2}_X)\otimes K_X$$ for \eqref{e8b} such that $D_1\circ \iota\,=\, \text{Id}$,
where $\iota$ is the homomorphism in \eqref{e8b}, defines a holomorphic
differential operator of order one $\widetilde{D}_1\, :\, F\otimes K^{-1/2}_X\, \longrightarrow\,
(F\otimes K^{-1/2}_X)\otimes K_X$ which satisfies the Leibniz identity, in other words,
$\widetilde{D}_1$ is a holomorphic connection on $F\otimes K^{-1/2}_X$; conversely,
the homomorphism $D_2\, :\,J^1(F\otimes K^{-1/2}_X)\, \longrightarrow\,
(F\otimes K^{-1/2}_X)\otimes K_X$ corresponding to any holomorphic connection on $F\otimes K^{-1/2}_X$
produces a holomorphic splitting of \eqref{e8b}.

Take any endomorphism $\rho\, :\, J^1(F\otimes K^{-1/2}_X)\, \longrightarrow\, J^1(F\otimes K^{-1/2}_X)$
over $X$.
Consider the composition of homomorphisms
\begin{equation}\label{ch1}
F\otimes K^{1/2}_X\,\stackrel{\iota}{\longrightarrow}\, J^1(F\otimes K^{-1/2}_X) 
\,\stackrel{\rho}{\longrightarrow}\, J^1(F\otimes K^{-1/2}_X)
\, \stackrel{q_0}{\longrightarrow}\, F\otimes K^{-1/2}_X\, ,
\end{equation}
where $\iota$ and $q_0$ are the homomorphisms in \eqref{e8b}. Both the vector bundles
$F\otimes K^{1/2}_X$ and $F\otimes K^{-1/2}_X$ are stable because $F$ is so. We have
$$
\frac{\text{degree}(F\otimes K^{1/2}_X)}{\text{rank}(F\otimes K^{1/2}_X)}\, >\,
\frac{\text{degree}(F\otimes K^{-1/2}_X)}{\text{rank}(F\otimes K^{-1/2}_X)}$$ since $g \, \geq\, 2$.
Therefore, there is no nonzero homomorphism from $F\otimes K^{1/2}_X$ to $F\otimes K^{-1/2}_X$.
In particular, the composition of homomorphisms in \eqref{ch1} vanishes identically.

Now let $\rho'\, :=\, \rho\big\vert_{F\otimes K^{1/2}_X}\, :\, F\otimes K^{1/2}_X\, \longrightarrow\, 
F\otimes K^{1/2}_X$ be the restriction of $\rho$ to the subbundle $F\otimes K^{1/2}_X$
in \eqref{e8b}. Since
$F\otimes K^{1/2}_X$ is stable, there is a $c\, \in\, \mathbb C$ such that
$\rho'\,=\, c\cdot {\rm Id}_{F\otimes K^{1/2}_X}$. Define the endomorphism over $X$
\begin{equation}\label{r1}
\rho_1\, :=\, \rho-c\cdot {\rm Id}_{J^1(F\otimes K^{-1/2}_X)}\,\, :\,\,
J^1(F\otimes K^{-1/2}_X)\, \longrightarrow\, J^1(F\otimes K^{-1/2}_X)\, .
\end{equation}
So we have $\rho_1(F\otimes K^{1/2}_X)\,=\, 0$. This implies that there is a homomorphism
$$
\rho'_2\, \,:\,\, J^1(F\otimes K^{-1/2}_X)\big{/}(F\otimes K^{1/2}_X)\,=\, F\otimes K^{-1/2}_X\,
\longrightarrow\, J^1(F\otimes K^{-1/2}_X)
$$
such that
\begin{equation}\label{ji}
\rho_1\,=\, \rho'_2\circ q_0\, ,
\end{equation}
where $q_0$ and $\rho_1$ are the homomorphisms in \eqref{e8b} and \eqref{r1} respectively.
Now $q_0\circ \rho'_2\, \in\, H^0(X,\, \text{End}(F\otimes K^{-1/2}_X))$ coincides with
$c'\cdot\text{Id}$ for some $c'\, \in\, \mathbb C$, because $F\otimes K^{-1/2}_X$ is stable.
Next we note that if $c'\, \not=\, 0$, then $\frac{1}{c'}\rho'_2\, :\, F\otimes K^{-1/2}_X\,
\longrightarrow\, J^1(F\otimes K^{-1/2}_X)$ is
a holomorphic splitting of the short exact sequence in \eqref{e8b}.
Since the short exact sequence in \eqref{e8b} does not admit a holomorphic splitting (this
was shown earlier), we conclude that $c'\,=\, 0$.

Given that $c'\,=\, 0$, it is deduced that there is a homomorphism
$$\rho_2\,\, :\,\, F\otimes K^{-1/2}_X\, \longrightarrow\, F\otimes K^{1/2}_X$$
such that $\rho'_2\,=\,\iota\circ\rho_2$, where $\iota$ and $\rho'_2$ are the homomorphisms in \eqref{e8b}
and \eqref{ji} respectively. Therefore, from \eqref{ji} and the definition of $\varpi$ in \eqref{vp} it follows that
$$
\rho_1\,=\, \iota\circ\rho_2\circ q_0\,=\, \varpi(\rho_2)\, .
$$
So from \eqref{r1} we know that $\rho\,=\, \varpi(\rho_2)+c\cdot{\rm Id}_{J^1(F\otimes K^{-1/2}_X)}$.
\end{proof}

As in Lemma \ref{lem4}, $F$ is a stable vector bundle on $X$ of rank $r$ and degree zero. Let
$$
\text{Aut}(J^1(F\otimes K^{-1/2}_X))
$$
be the group of all holomorphic automorphisms of $J^1(F\otimes K^{-1/2}_X)$; it is a complex
affine algebraic group, in fact, it is a Zariski open subset of the
affine space $H^0\big(X,\, {\rm End}(J^1(F\otimes K^{-1/2}_X))\big)$.

{}From Lemma \ref{lem4} it follows immediately that
\begin{equation}\label{aj}
\text{Aut}(J^1(F\otimes K^{-1/2}_X))\,=\, H^0(X,\, {\rm End}(F)\otimes K_X)\times {\mathbb C}^\star\, ,
\end{equation}
where ${\mathbb C}^\star\,=\, {\mathbb C}\setminus\{0\}$ is the multiplicative group.

The group $\text{Aut}(J^1(F\otimes K^{-1/2}_X))$ has a natural action on ${\rm Conn}(J^1(F\otimes
K^{-1/2}_X))$ defined in \eqref{c0}. The action of any $T\, \in\, \text{Aut}(J^1(F\otimes K^{-1/2}_X))$
sends any holomorphic connection $D$ to the holomorphic connection given by the composition
$(T\otimes {\rm Id}_{K_X})\circ D\circ T^{-1}$ of operators.

The isomorphism in \eqref{aj} has the following consequence:

\begin{corollary}\label{cor3}
Let $F$ be a stable vector bundle on $X$ of rank $r$ and degree zero.
The action of ${\rm Aut}(J^1(F\otimes K^{-1/2}_X))$ on ${\rm Conn}(J^1(F\otimes K^{-1/2}_X))$
preserves the subset $\widetilde{\mathcal C}(F)$ in Definition \ref{def2}.

The action of ${\rm Aut}(J^1(F\otimes K^{-1/2}_X))$ on $\widetilde{\mathcal C}(F)$ descends to
an action of ${\rm Aut}(J^1(F\otimes K^{-1/2}_X))$ on the quotient space ${\mathcal C}(F)$
in Definition \ref{def3}.
\end{corollary}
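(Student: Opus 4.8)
The plan is to exploit the explicit description of the automorphism group recorded in \eqref{aj}. By Lemma \ref{lem4}, every $T\, \in\, \text{Aut}(J^1(F\otimes K^{-1/2}_X))$ is of the form $T\,=\, \varpi(v)+c\cdot{\rm Id}_E$ for a unique pair $(v,\,c)\, \in\, H^0(X,\, \text{End}(F)\otimes K_X)\times{\mathbb C}^\star$, where I abbreviate $E\,=\, J^1(F\otimes K^{-1/2}_X)$, $S\,=\, F\otimes K^{1/2}_X$, and $\varpi$ is the map in \eqref{vp}. The first step is to read off the behaviour of $T$ with respect to the subbundle $S\, \subset\, E$ and the quotient $E/S\,=\, S\otimes TX$. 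Since $\varpi(v)\,=\, \iota\circ v\circ q_0$ with $\iota,\, q_0$ as in \eqref{e8b}, and $q_0\circ\iota\,=\, 0$, the operator $\varpi(v)$ has image inside $S$ and vanishes on $S$. Hence both $T$ and $T^{-1}$ preserve $S$, acting there as $c\cdot{\rm Id}_S$ and $c^{-1}\cdot{\rm Id}_S$, and inducing $c\cdot{\rm Id}_{E/S}$ and $c^{-1}\cdot{\rm Id}_{E/S}$ on the quotient. Concretely this gives the relations $T\circ\iota\,=\, c\,\iota$, $T^{-1}\circ\iota\,=\, c^{-1}\iota$, $q_0\circ T\,=\, c\,q_0$ and $q_0\circ T^{-1}\,=\, c^{-1}q_0$.

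For the first assertion I would compute the second fundamental form of $T\cdot D\,=\, (T\otimes{\rm Id}_{K_X})\circ D\circ T^{-1}$. Writing ${\mathcal S}(F,\, T\cdot D)$ as the composition in \eqref{e7} and using that $T$ preserves $S$, the outer maps $q_0\otimes{\rm Id}_{K_X}$ and $\iota$ absorb $T$ and $T^{-1}$ via the relations above, yielding ${\mathcal S}(F,\, T\cdot D)\,=\, c\cdot{\mathcal S}(F,\, D)\cdot c^{-1}\,=\, {\mathcal S}(F,\, D)$; the central scalars $c$ and $c^{-1}$ cancel. Thus ${\mathcal S}(F,\, T\cdot D)\,=\, {\mathcal S}(F,\, D)$, so if $D\, \in\, \widetilde{\mathcal C}(F)$, meaning ${\mathcal S}(F,\, D)\,=\, {\rm Id}_S$ in the sense of Definition \ref{def2}, then $T\cdot D\, \in\, \widetilde{\mathcal C}(F)$ as well. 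This proves that the action preserves $\widetilde{\mathcal C}(F)$.

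For the second assertion the plan is to show that the $\text{Aut}(E)$--action commutes with the action of $H^0(X,\, \text{ad}(S)\otimes K^{\otimes 2}_X)$ used in Definition \ref{def3}. Recalling from \eqref{e5} that this action is $D\, \longmapsto\, D+\psi(\alpha)$ with $\psi(\alpha)\,=\, (\iota\otimes{\rm Id}_{K_X})\circ\alpha\circ q_0$, I would conjugate $\psi(\alpha)$ by $T$: using $T\circ\iota\,=\, c\,\iota$ and $q_0\circ T^{-1}\,=\, c^{-1}q_0$ one gets $(T\otimes{\rm Id}_{K_X})\circ\psi(\alpha)\circ T^{-1}\,=\, ((T\circ\iota)\otimes{\rm Id}_{K_X})\circ\alpha\circ(q_0\circ T^{-1})\,=\, c\cdot c^{-1}\cdot\psi(\alpha)\,=\, \psi(\alpha)$. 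Hence $T\cdot(D+\psi(\alpha))\,=\, T\cdot D+\psi(\alpha)$ for every $\alpha$, in particular for every $\alpha\, \in\, H^0(X,\, \text{ad}(S)\otimes K^{\otimes 2}_X)$. Therefore $T$ sends each $H^0(X,\, \text{ad}(S)\otimes K^{\otimes 2}_X)$--orbit in $\widetilde{\mathcal C}(F)$ to an orbit of the same group, so the action descends to a well-defined map of ${\mathcal C}(F)$; compatibility with composition of automorphisms then shows the descended maps constitute a group action.

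I expect no serious obstacle here. The only genuine content is the structural observation, immediate from Lemma \ref{lem4} and \eqref{vp}, that the ``unipotent'' part $\varpi(v)$ of every automorphism maps $E$ into $S$ and annihilates $S$. Precisely this makes both the second fundamental form and the operators $\psi(\alpha)$ insensitive to $\varpi(v)$, so that only the harmless central scalar $c$ survives, and its contributions cancel in each computation. Everything after the identification of the shape of $T$ is routine scalar bookkeeping.
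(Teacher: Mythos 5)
Your proposal is correct and follows essentially the same route as the paper: both arguments rest on the structure of ${\rm Aut}(J^1(F\otimes K^{-1/2}_X))$ from Lemma \ref{lem4}/\eqref{aj}, deducing that conjugation leaves the second fundamental form of $F\otimes K^{1/2}_X$ unchanged and that conjugation commutes with the translation action of $H^0(X,\,{\rm ad}(S)\otimes K^{\otimes 2}_X)$. The only difference is one of explicitness: where the paper says these facts ``follow immediately from \eqref{aj}'', you carry out the verification via the relations $T\circ\iota\,=\,c\,\iota$ and $q_0\circ T^{-1}\,=\,c^{-1}q_0$, which is exactly the intended computation.
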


\begin{proof}
The first statement is straightforward. Take any $D\,\in\, {\rm Conn}(J^1(F\otimes K^{-1/2}_X))$
and $T\,\in\,{\rm Aut}(J^1(F\otimes K^{-1/2}_X))$. From \eqref{aj} it follows immediately that the
second fundamental forms of the subbundle $F\otimes K^{1/2}_X\, \subset\, J^1(F\otimes K^{-1/2}_X)$
(see \eqref{e8b}) for the two connections $D$ and $(T\otimes {\rm Id}_{K_X})\circ D\circ T^{-1}$
coincide; see also Corollary \ref{cor1}. This implies that the action of ${\rm Aut}(J^1(F\otimes
K^{-1/2}_X))$ on ${\rm Conn}(J^1(F\otimes K^{-1/2}_X))$ preserves $\widetilde{\mathcal C}(F)$.

Take any $D\,\in\, {\rm Conn}(J^1(F\otimes K^{-1/2}_X))$,\, $T\,\in\,
{\rm Aut}(J^1(F\otimes K^{-1/2}_X))$ and $$B\, \in\, H^0(X,\, \text{ad}(S)\otimes K^{\otimes 2}_X).$$ From
\eqref{aj} it follows that
$$
(T\otimes {\rm Id}_{K_X})\circ (D+B)\circ T^{-1}\,=\, (T\otimes {\rm Id}_{K_X})\circ D\circ T^{-1} +B\, .
$$
Therefore, the translation action of
$H^0(X,\, \text{ad}(S)\otimes K^{\otimes 2}_X)$ on $\widetilde{\mathcal C}(F)$ and the action of
${\rm Aut}(J^1(F\otimes K^{-1/2}_X))$ on $\widetilde{\mathcal C}(F)$ commute.
The second statement of the corollary follows from this.
\end{proof}

In the next section we will put structures on the quotient space obtained from Corollary \ref{cor3}
\begin{equation}\label{qe}
{\mathcal D}(F)\, :=\, {\mathcal C}(F)\big{/}{\rm Aut}(J^1(F\otimes K^{-1/2}_X))\, .
\end{equation}

\section{The space of holomorphic connections}\label{sec3}

Take a stable holomorphic vector bundle $F$ on $X$ of rank $r$ and degree zero.

As before, $K^{1/2}_X$ is a theta characteristic on $X$. From Proposition
\ref{prop1} and Lemma \ref{lem1b} we know that 
$J^1(F\otimes K^{-1/2}_X)$ admits a holomorphic connection $D_1$ for which the second fundamental form ${\mathcal 
S}(F, D_1)$ in \eqref{e7} is the identity map of $F\otimes K^{1/2}_X$.

Recall that the space ${\rm Conn}(J^1(F\otimes K^{-1/2}_X))$ in \eqref{c0} is an affine space modeled
on the vector space $H^0(X, \,\text{End}(J^1(F\otimes K^{-1/2}_X))\otimes K_X)$. Let
\begin{equation}\label{e21}
0\, \longrightarrow\, 
F\otimes K^{3/2}_X\,=\, F\otimes K^{1/2}_X\otimes K_X \,\xrightarrow{\iota\otimes{\rm Id}_{K_X}}\,
J^1(F\otimes K^{-1/2}_X)\otimes K_X
\end{equation}
$$
\xrightarrow{q_0\otimes{\rm Id}_{K_X}}\,
F\otimes K^{-1/2}_X\otimes K_X \,=\, F\otimes K^{1/2}_X\, \longrightarrow\, 0
$$
be the short exact sequence of vector bundles obtained by tensoring the short exact sequence in \eqref{e8b} by $K_X$.

\begin{lemma}\label{lem3}
Take any
$$
\Psi\, \in\, H^0(X, \,{\rm Hom}(J^1(F\otimes K^{-1/2}_X),\, J^1(F\otimes K^{-1/2}_X)\otimes K_X))\, .
$$
Then
$$
\Psi(F\otimes K^{1/2}_X)\, \subset\, F\otimes K^{3/2}_X\, ,
$$
where $F\otimes K^{1/2}_X$ (respectively, $F\otimes K^{3/2}_X$) is the subbundle of
$J^1(F\otimes K^{-1/2}_X)$ (respectively, $J^1(F\otimes K^{-1/2}_X)\otimes K_X$) in
\eqref{e8b} (respectively, \eqref{e21}).
\end{lemma}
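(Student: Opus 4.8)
The plan is to reduce the asserted inclusion to the vanishing of a single scalar, and then to derive a contradiction from the non-splitting of the jet sequence \eqref{e8b}. Write $S\,=\, F\otimes K^{1/2}_X$ and $E\,=\, J^1(F\otimes K^{-1/2}_X)$ as in \eqref{e8}, and let $\iota$ and $q_0$ be the maps in \eqref{e8b}. The desired inclusion $\Psi(S)\, \subset\, F\otimes K^{3/2}_X\,=\, S\otimes K_X$ is equivalent to the vanishing of the composition
\begin{equation*}
\overline{\Psi}\, :=\, (q_0\otimes{\rm Id}_{K_X})\circ\Psi\circ\iota\, :\, S\, \longrightarrow\, F\otimes K^{1/2}_X\, ,
\end{equation*}
where $q_0\otimes{\rm Id}_{K_X}$ is the projection of \eqref{e21} onto its quotient $F\otimes K^{1/2}_X$; indeed, $\Psi(S)$ is contained in the kernel $S\otimes K_X$ of this projection precisely when $\overline{\Psi}\,=\,0$. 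So it suffices to prove $\overline{\Psi}\,=\, 0$.

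First I would identify the source and target of $\overline{\Psi}$. Since the quotient of \eqref{e21} is canonically $F\otimes K^{-1/2}_X\otimes K_X\,=\, F\otimes K^{1/2}_X\,=\, S$, the map $\overline{\Psi}$ is an endomorphism of $S\,=\, F\otimes K^{1/2}_X$. As $F$ is stable, so is $F\otimes K^{1/2}_X$, and hence $H^0(X,\, {\rm End}(F\otimes K^{1/2}_X))\,=\, H^0(X,\, {\rm End}(F))\,=\, {\mathbb C}$. Therefore $\overline{\Psi}\,=\, c\cdot{\rm Id}_S$ for some $c\,\in\, {\mathbb C}$, and the lemma amounts to showing $c\,=\, 0$.

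The main point is the following contradiction. Suppose $c\,\neq\, 0$, so that $\overline{\Psi}$ is an isomorphism. Then, using the canonical identification of $S$ with the quotient $F\otimes K^{1/2}_X$ of \eqref{e21}, the homomorphism
\begin{equation*}
s\, :=\, \tfrac{1}{c}\,\Psi\circ\iota\, :\, F\otimes K^{1/2}_X\, \longrightarrow\, E\otimes K_X
\end{equation*}
satisfies $(q_0\otimes{\rm Id}_{K_X})\circ s\,=\, {\rm Id}$, i.e., $s$ is a holomorphic splitting of the short exact sequence \eqref{e21}. But \eqref{e21} is obtained from \eqref{e8b} by tensoring with the line bundle $K_X$, so a splitting of \eqref{e21} would yield a splitting of \eqref{e8b}. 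This is impossible: the sequence \eqref{e8b} does not split holomorphically, because $F\otimes K^{-1/2}_X$ admits no holomorphic connection (its degree $r(1-g)$ is nonzero), as already used in the proof of Lemma \ref{lem4}; equivalently, its extension class equals ${\rm Id}_F\otimes 1\,\neq\, 0$ by Corollary \ref{cor-1}. This contradiction forces $c\,=\, 0$, whence $\overline{\Psi}\,=\, 0$ and $\Psi(S)\,\subset\, F\otimes K^{3/2}_X$.

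I expect the only genuinely subtle step to be the recognition that a naive slope or ``no nonzero homomorphism'' argument does not suffice here: the space $H^0(X,\, {\rm Hom}(S,\, F\otimes K^{1/2}_X))$ is one-dimensional and nonzero (both bundles being canonically $F\otimes K^{1/2}_X$), so the vanishing of $\overline{\Psi}$ cannot come from a dimension count and must instead exploit the non-split nature of the jet extension \eqref{e8b}. Once this is noticed, the splitting construction above is the crux, and the remaining verifications are routine.
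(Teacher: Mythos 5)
Your proof is correct and follows essentially the same route as the paper: both reduce the claim to the vanishing of the induced endomorphism of the stable (hence simple) bundle $F\otimes K^{1/2}_X$, and both rule out the nonzero case by producing a holomorphic splitting of the jet sequence \eqref{e8b} (you split \eqref{e21} by the map $\tfrac{1}{c}\Psi\circ\iota$ and untwist by $K_X$, while the paper exhibits the splitting as the subbundle $\Psi(F\otimes K^{1/2}_X)\otimes TX$), contradicting the fact that $F\otimes K^{-1/2}_X$ admits no holomorphic connection. Your closing remark that a slope argument cannot work here, unlike in Lemma \ref{lem4}, is also an accurate reading of why the non-splitting of the extension is the essential input.
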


\begin{proof}
Take any holomorphic homomorphism $$\Psi\, :\, J^1(F\otimes K^{-1/2}_X)\,\longrightarrow\,
J^1(F\otimes K^{-1/2}_X)\otimes K_X\, .$$
Consider the composition of homomorphisms
\begin{equation}\label{e22}
F\otimes K^{1/2}_X\, \hookrightarrow\, J^1(F\otimes K^{-1/2}_X) \,\stackrel{\Psi}{\longrightarrow}\,
J^1(F\otimes K^{-1/2}_X)\otimes K_X
\end{equation}
$$
\xrightarrow{q_0\otimes{\rm Id}_{K_X}}\,
F\otimes K^{-1/2}_X\otimes K_X \,=\, F\otimes K^{1/2}_X\, ,
$$
where $q_0\otimes{\rm Id}_{K_X}$ is the homomorphism in \eqref{e21}. Since the vector bundle $F$, and hence
$F\otimes K^{1/2}_X$, is stable, any nonzero holomorphic endomorphism of $F\otimes K^{1/2}_X$ is
an isomorphism.

We will now show that the composition of homomorphisms in \eqref{e22} is the zero
homomorphism.

To prove this by contradiction,
assume that the composition of homomorphisms in \eqref{e22} is nonzero. As observed above, this implies
that the composition of homomorphisms in \eqref{e22} is an isomorphism. Consequently,
$\Psi(F\otimes K^{1/2}_X)$ is a subbundle of $J^1(F\otimes K^{-1/2}_X)\otimes K_X$.
Now consider the subbundle
\begin{equation}\label{el1}
\Psi(F\otimes K^{1/2}_X)\otimes TX\, \subset\, J^1(F\otimes K^{-1/2}_X)\otimes K_X\otimes TX
\,=\, J^1(F\otimes K^{-1/2}_X)\, .
\end{equation}
The condition that the composition of homomorphisms in \eqref{e22} is an isomorphism implies that
the homomorphism
$$
\Psi(F\otimes K^{1/2}_X)\otimes TX \, \stackrel{q'_0}{\longrightarrow}\, F\otimes K^{-1/2}_X\, ,
$$
where $q'_0$ is the restriction of the projection $q_0$
in \eqref{e8b} (see \eqref{el1}), is an isomorphism. Consequently, the subbundle
$\Psi(F\otimes K^{1/2}_X)\otimes TX\, \subset\, J^1(F\otimes K^{-1/2}_X)$ in \eqref{el1} produces a holomorphic
splitting of the short exact sequence in \eqref{e8b}.
But it was observed in the proof of Lemma \ref{lem4} that the short exact sequence in \eqref{e8b}
does not split holomorphically.

In view of the above contradiction we conclude that
the composition of homomorphisms in \eqref{e22} is the zero homomorphism.
This proves the lemma.
\end{proof}

The following two results are deduced using Lemma \ref{lem3}.

\begin{corollary}\label{cor1}
As in Lemma \ref{lem3}, $F$ is a stable vector bundle on $X$ of degree zero.
The space $\widetilde{\mathcal C}(F)$ in Definition \ref{def2} actually coincides with
${\rm Conn}(J^1(F\otimes K^{-1/2}_X))$ in \eqref{c0}.
\end{corollary}

\begin{proof}
Take any holomorphic connection $D$ on $J^1(F\otimes K^{-1/2}_X)$ and any
$$
\Psi\, \in\, H^0(X, \,{\rm Hom}(J^1(F\otimes K^{-1/2}_X),\, J^1(F\otimes K^{-1/2}_X)\otimes K_X))\, .
$$
So $D+\Psi$ is a holomorphic connection on $J^1(F\otimes K^{-1/2}_X)$. Consider the second fundamental
forms of the subbundle $F\otimes K^{1/2}_X\, \subset\, J^1(F\otimes K^{-1/2}_X)$ for the two connections
$D$ and $D+\Psi$. From Lemma \ref{lem3} it follows immediately that these two
second fundamental forms actually coincide. Since
$J^1(F\otimes K^{-1/2}_X)$ admits a holomorphic connection $D_1$ for which the second fundamental form ${\mathcal
S}(F, D_1)$ in \eqref{e7} is the identity map of $F\otimes K^{1/2}_X$ (see Proposition
\ref{prop1}), the proof is complete.
\end{proof}

\begin{lemma}\label{lem5}
The vector space $H^0\big(X, \,{\rm Hom}(J^1(F\otimes K^{-1/2}_X),\, J^1(F\otimes K^{-1/2}_X)\otimes K_X)\big)$
fits in the following short exact sequence:
$$
0\, \longrightarrow\, H^0(X,\, {\rm End}(F)\otimes K^{\otimes 2}_X)
\, \longrightarrow\, H^0(X, \,{\rm Hom}(J^1(F\otimes K^{-1/2}_X),\, J^1(F\otimes K^{-1/2}_X)\otimes K_X))
$$
$$
\, \longrightarrow\, H^0(X,\, {\rm End}(F)\otimes K_X)^{\oplus 2}\, \longrightarrow\, 0\, .
$$
\end{lemma}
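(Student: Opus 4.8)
The plan is to filter the sheaf $\text{Hom}(E,\, E\otimes K_X)\,=\, \text{End}(E)\otimes K_X$ by the subbundle $S\, \subset\, E$ of \eqref{e8b} and to read off the three terms of the asserted sequence from the associated graded; here, as in \eqref{e8}, $E\,=\, J^1(F\otimes K^{-1/2}_X)$ and $S\,=\, F\otimes K^{1/2}_X$, and I write $E/S\,=\, F\otimes K^{-1/2}_X$ for the quotient. First I would introduce the subsheaf
$$
\mathcal{P}\,:=\, \{\Psi\in \text{Hom}(E,\, E\otimes K_X)\,:\, \Psi(S)\subseteq S\otimes K_X\}\,\subseteq\, \text{Hom}(E,\, E\otimes K_X)
$$
of homomorphisms preserving the filtration, where $S\otimes K_X\,=\, F\otimes K^{3/2}_X$ is the subbundle of $E\otimes K_X$ appearing in \eqref{e21}. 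Its crucial feature is that, by Lemma \ref{lem3}, every global section of $\text{Hom}(E,\, E\otimes K_X)$ already lies in $\mathcal{P}$, so that $H^0(X,\, \mathcal{P})\,=\, H^0(X,\, \text{Hom}(E,\, E\otimes K_X))$. Any $\Psi\in\mathcal{P}$ restricts to $\Psi\vert_S\in \text{Hom}(S,\, S\otimes K_X)$ and descends to $\overline\Psi\in \text{Hom}(E/S,\, (E/S)\otimes K_X)$; using $\text{End}(S)\,=\, \text{End}(F)$ (the remark after \eqref{e8}) together with $\text{End}(E/S)\,=\, \text{End}(F)$, both of these target sheaves are identified with $\text{End}(F)\otimes K_X$, producing the rightmost term $H^0(X,\, \text{End}(F)\otimes K_X)^{\oplus 2}$.

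Next I would establish the short exact sequence of sheaves
$$
0\,\longrightarrow\, \text{Hom}(E/S,\, S\otimes K_X)\,\longrightarrow\, \mathcal{P}\,\longrightarrow\, \text{Hom}(S,\, S\otimes K_X)\oplus \text{Hom}(E/S,\, (E/S)\otimes K_X)\,\longrightarrow\, 0\,,
$$
where the second nonzero map is $\Psi\,\longmapsto\, (\Psi\vert_S,\, \overline\Psi)$. Exactness is a local statement: over a trivialising open set the filtration $S\,\subset\, E$ splits, a $K_X$--valued endomorphism preserving $S$ becomes block upper triangular, the two diagonal blocks give $(\Psi\vert_S,\, \overline\Psi)$ and may be prescribed arbitrarily, and the kernel is exactly the upper-right block $\text{Hom}(E/S,\, S\otimes K_X)$, realised inside $\mathcal{P}$ as $\Psi\,=\, (\iota\otimes{\rm Id}_{K_X})\circ\phi\circ q_0$ with $\iota$ and $q_0$ as in \eqref{e8b}. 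The leftmost term is identified by
$$
\text{Hom}(E/S,\, S\otimes K_X)\,=\, \text{Hom}(F\otimes K^{-1/2}_X,\, F\otimes K^{1/2}_X)\otimes K_X\,=\, \text{End}(F)\otimes K^{\otimes 2}_X\,.
$$
Passing to the long exact cohomology sequence and inserting $H^0(X,\, \mathcal{P})\,=\, H^0(X,\, \text{Hom}(E,\, E\otimes K_X))$ from the first step, the first three terms give precisely the left half of the claimed sequence, with the leftmost map injective and with exactness at the middle term.

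The only remaining point, and the main obstacle, is surjectivity of $\Psi\,\longmapsto\, (\Psi\vert_S,\, \overline\Psi)$ on global sections, i.e. the vanishing of the connecting homomorphism into $H^1(X,\, \text{End}(F)\otimes K^{\otimes 2}_X)$. I would deduce this from the vanishing
$$
H^1(X,\, \text{End}(F)\otimes K^{\otimes 2}_X)\,=\, 0\,.
$$
By Serre duality this group is dual to $H^0(X,\, \text{End}(F)\otimes K^{-1}_X)$ (using $\text{End}(F)^*\,\cong\, \text{End}(F)$), and since $F$ is stable the bundle $\text{End}(F)\,=\, F\otimes F^*$ is semistable of degree $0$; hence $\text{End}(F)\otimes K^{-1}_X$ is semistable of negative degree and so has no nonzero global sections. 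Thus the connecting map is automatically zero and the sequence is exact on the right. As a consistency check, which could instead be used to force surjectivity directly, one may compute all three dimensions by Riemann--Roch and Serre duality: Lemma \ref{lem4} gives $h^0(X,\, \text{End}(E))\,=\, h^0(X,\, \text{End}(F)\otimes K_X)+1$, from which the identity $h^0(\text{middle})\,=\, h^0(\text{left})+h^0(\text{right})$ follows and pins down the surjection.
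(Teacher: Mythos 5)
Your proposal is correct and follows essentially the same route as the paper: the subsheaf $\mathcal{P}$ is exactly the paper's ${\rm Hom}_P$, the sheaf-level short exact sequence is the paper's \eqref{ec1}, the identification $H^0(X,\mathcal{P})=H^0(X,{\rm Hom}(E,E\otimes K_X))$ via Lemma \ref{lem3} is the same step, and the vanishing of $H^1(X,{\rm End}(F)\otimes K^{\otimes 2}_X)$ by Serre duality and semistability of ${\rm End}(F)$ is precisely \eqref{lj3}.
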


\begin{proof}
Let
$$
{\rm Hom}_P(J^1(F\otimes K^{-1/2}_X),\, J^1(F\otimes K^{-1/2}_X)\otimes K_X)\, \subset\,
{\rm Hom}(J^1(F\otimes K^{-1/2}_X),\, J^1(F\otimes K^{-1/2}_X)\otimes K_X)
$$
be the subbundle defined by the sheaf of homomorphisms from $J^1(F\otimes K^{-1/2}_X)$ to
$J^1(F\otimes K^{-1/2}_X)\otimes K_X$ that take the subbundle 
$F\otimes K^{1/2}_X$ in \eqref{e8b} to the subbundle $F\otimes K^{3/2}_X$ in \eqref{e21}. Let
\begin{gather}
\text{Hom}(F\otimes K^{-1/2}_X,\, F\otimes K^{3/2}_X)\,=\,
{\rm End}(F)\otimes K^{\otimes 2}_X\nonumber\\
\stackrel{\bf j}{\hookrightarrow} \, 
{\rm Hom}_P(J^1(F\otimes K^{-1/2}_X),\, J^1(F\otimes K^{-1/2}_X)\otimes K_X)\label{jl}
\end{gather}
be the homomorphism that sends any locally defined homomorphism
$$
\eta\, :\, F\otimes K^{-1/2}_X\, \longrightarrow\, F\otimes K^{3/2}_X
$$
to the following composition of (locally defined) homomorphisms:
$$
J^1(F\otimes K^{-1/2}_X)\, \stackrel{q_0}{\longrightarrow}\, F\otimes K^{-1/2}_X\,
\stackrel{\eta}{\longrightarrow}\, F\otimes K^{3/2}_X \,\xrightarrow{\iota\otimes{\rm Id}_{K_X}}\,
J^1(F\otimes K^{-1/2}_X)\otimes K_X\, ,
$$
where $q_0$ and $\iota\otimes{\rm Id}_{K_X}$ are the homomorphisms in \eqref{e8b} and \eqref{e21} respectively.

We have a natural surjective homomorphism
$$
\varpi_1\, :\, {\rm Hom}_P(J^1(F\otimes K^{-1/2}_X),\, J^1(F\otimes K^{-1/2}_X)\otimes K_X)
$$
$$
\longrightarrow\, \text{Hom}(F\otimes K^{1/2}_X,\, F\otimes K^{3/2}_X)\oplus
\text{Hom}(F\otimes K^{-1/2}_X,\, F\otimes K^{1/2}_X)\,=\, (\text{End}(F)\otimes K_X)^{\oplus 2}
$$
that sends any homomorphism to the induced homomorphism from the subbundle (respectively, quotient bundle)
in \eqref{e8b} to the subbundle (respectively, quotient bundle) in \eqref{e21}.
Consequently, we have a short exact sequence of holomorphic vector bundles on $X$
\begin{gather}
0 \, \longrightarrow\,{\rm End}(F)\otimes K^{\otimes 2}_X\, \stackrel{\bf j}{\longrightarrow} \,
{\rm Hom}_P(J^1(F\otimes K^{-1/2}_X),\, J^1(F\otimes K^{-1/2}_X)\otimes K_X)\nonumber\\
\,\stackrel{\varpi_1}{\longrightarrow}\, (\text{End}(F)\otimes K_X)^{\oplus 2} \, \longrightarrow\, 0\, ,\label{ec1}
\end{gather}
where $\bf j$ is the homomorphism in \eqref{jl}.

Let
\begin{gather}
0 \, \longrightarrow\,H^0(X,\, {\rm End}(F)\otimes K^{\otimes 2}_X)\, \longrightarrow \,
H^0(X,\, {\rm Hom}_P(J^1(F\otimes K^{-1/2}_X),\, J^1(F\otimes K^{-1/2}_X)\otimes K_X))\nonumber\\
\longrightarrow \, H^0(X,\, (\text{End}(K)\otimes K_X)^{\oplus 2})
\, \longrightarrow\,H^1(X,\, {\rm End}(F)\otimes K^{\otimes 2}_X)\label{lj2}
\end{gather}
be the long exact sequence of cohomologies corresponding to the exact sequence in \eqref{ec1}. By Serre duality,
\begin{equation}\label{lj3}
H^1(X,\, {\rm End}(F)\otimes K^{\otimes 2}_X)\,=\, H^0(X,\, {\rm End}(F)\otimes TX)^*\,=\, 0\, ,
\end{equation}
because ${\rm End}(F)$ is semistable of degree zero (recall that $F$ is stable) and $\text{degree}(TX)\, <\, 0$
(recall that $g\, \geq\, 2$); the unitary flat connection on $F$, \cite{NS}, induces a
unitary flat connection on ${\rm End}(F)$ and hence ${\rm End}(F)$ is polystable, in particular,
${\rm End}(F)$ is semistable. From Lemma \ref{lem3} we have
$$
H^0\left(X,\, {\rm Hom}_P(J^1(F\otimes K^{-1/2}_X),\, J^1(F\otimes K^{-1/2}_X)\otimes K_X)\right)
$$
$$
=\,
H^0\left(X,\, {\rm Hom}(J^1(F\otimes K^{-1/2}_X),\, J^1(F\otimes K^{-1/2}_X)\otimes K_X)\right)\, .
$$
Consequently, the lemma follows from \eqref{lj2} and \eqref{lj3}. 
\end{proof}

\section{Infinitesimal deformations}\label{sec4}

The space of all infinitesimal deformations of a compact Riemann surface $X$ is identified with
$H^1(X,\,TX)$. By Serre duality,
$$
H^1(X,\,TX)^*\,=\, H^0(X,\, K^{\otimes 2}_X)\, .
$$
For a holomorphic vector bundle $V$ on $X$, the infinitesimal deformations of $V$,
keeping $X$ fixed, are parametrized by $H^1(X,\,\text{End}(V))$. By Serre duality,
$$
H^1(X,\,\text{End}(V))^*\,=\, H^0(X,\,\text{End}(V)\otimes K_X)\, .
$$

Consider the holomorphic vector bundle
\begin{equation}\label{e13}
\text{Diff}^i_X(V,\, V)\, :=\, \text{Hom}(J^i(V),\, V)
\end{equation}
on $X$
given by the sheaf of all holomorphic differential operators of order $i$ from $V$ to itself. Take the
dual of the exact sequence in \eqref{ej} for $W\,=\, V$
$$
0 \, \longrightarrow\, V^* \, \stackrel{\iota}{\longrightarrow}\, J^1(V)^* \,
\longrightarrow\, V^*\otimes TX \, \longrightarrow\,0\, .
$$
Tensoring it with $V$ the following 
short exact sequence of holomorphic vector bundles on $X$ is obtained
\begin{equation}\label{e9}
0\, \longrightarrow\, \text{Diff}^0_X(V,\, V)\,=\, \text{End}(V)\, \longrightarrow\, \text{Diff}^1_X(V,\, V)
\, \stackrel{\sigma_0}{\longrightarrow}\, \text{End}(V)\otimes TX \, \longrightarrow\, 0\, ;
\end{equation}
the above projection $\sigma_0$ coincides with the symbol map. Using the natural inclusion ${\mathcal O}_X\, \subset\,
\text{End}(V)$ (see \eqref{e10}), we have $TX\, \subset\,\text{End}(V)\otimes TX$. Now define the Atiyah bundle for $V$
\begin{equation}\label{e12}
\text{At}(V)\, :=\, \sigma^{-1}_0(TX) \, \subset\, \text{Diff}^1_X(V,\, V)\, ,
\end{equation}
where $\sigma_0$ is the projection in \eqref{e9} \cite{At}. The exact sequence in \eqref{e9} produces the
short exact sequence of holomorphic vector bundles on $X$
\begin{equation}\label{e11}
0\, \longrightarrow\, \text{Diff}^0_X(V,\, V)\,=\, \text{End}(V)\, \longrightarrow\, \text{At}(V)
\, \stackrel{\sigma}{\longrightarrow}\, TX \, \longrightarrow\, 0\, ,
\end{equation}
where $\sigma$ is the restriction of $\sigma_0$ (in \eqref{e9}) to the subbundle $\text{At}(V)$; this exact
sequence is known as the Atiyah exact sequence for $V$ (see \cite{At}).

The space of all infinitesimal deformations of the pair $(X,\, V)$ is known to be identified with
$H^1(X,\, \text{At}(V))$ (see \cite[p.~1413, Proposition 4.3]{Ch}, \cite[p.~127, (2.12)]{BHH}).
The homomorphism
\begin{equation}\label{e11b}
H^1(X,\, \text{At}(V))\, \longrightarrow\, H^1(X,\, TX)
\end{equation}
produced by the projection
$\sigma$ in \eqref{e11} coincides with the forgetful map that sends an infinitesimal deformation of
$(X,\, V)$ to the infinitesimal deformation of $X$ obtained from it by simply forgetting the vector bundle.
The homomorphism
$$H^1(X,\, \text{End}(V))\, \longrightarrow\, H^1(X,\, \text{At}(V))$$
induced by the homomorphism
$\text{End}(V)\, \longrightarrow\, \text{At}(V)$ in \eqref{e11} coincides with the map
that sends an infinitesimal deformation of $V$ to the infinitesimal deformation of
$(X,\, V)$ associated to it that keeps the Riemann surface fixed.

{}From the construction of $\text{At}(V)$ in \eqref{e12} it follows immediately that the dual vector
bundle $\text{At}(V)^*$ is a quotient of $\text{Diff}^1_X(V,\, V)^*\,=\, J^1(V)\otimes V^*$ (see \eqref{e13}).
To describe this quotient, consider the subbundle $V\otimes K_X\, \subset\, J^1(V)$ in \eqref{ej}. Using
it, we have
\begin{equation}\label{sb1}
\text{ad}(V)\otimes K_X\, \subset\, \text{End}(V)\otimes K_X \,=\, (V\otimes K_X)\otimes V^*
\, \subset\,J^1(V)\otimes V^*\, ,
\end{equation}
where $\text{ad}(V)$ as before is the sheaf of trace zero endomorphisms of $V$. From \eqref{e12} it is deduced that
\begin{equation}\label{e14}
\text{At}(V)^*\,=\, (J^1(V)\otimes V^*)\big{/}(\text{ad}(V)\otimes K_X)\,=\,
\text{Diff}^1_X(V,\, V)^*\big{/}(\text{ad}(V)\otimes K_X)\, ,
\end{equation}
where the quotient is by the subbundle in \eqref{sb1}; note that $\text{Diff}^1_X(V,\, V)^*\,=\,
J^1(V)\otimes V^*$. In view of \eqref{e14}, by Serre duality,
\begin{equation}\label{e15}
H^1(X,\,\text{At}(V))^*\,=\, H^0
\left(X,\,\,\, \frac{J^1(V)\otimes V^*}{\text{ad}(V)\otimes K_X}\otimes K_X\right)\, .
\end{equation}

As in Section \ref{sec2.1}, let $K^{1/2}_X$ be a theta characteristic on $X$. Since the collection of
theta characteristics on $X$ is a discrete set, there is unique way to move the theta characteristic
when $X$ moves over a family of Riemann surfaces parametrized by a simply connected space. A consequence
of this observation will be explained now.

Consider the Atiyah exact sequence
$$
0\, \longrightarrow\, {\mathcal O}_X\, \longrightarrow\, \text{At}(K^{1/2}_X)
\, \longrightarrow\, TX \, \longrightarrow\, 0
$$
in \eqref{e11} for $V\,=\, K^{1/2}_X$. Let
\begin{gather}
0\,=\, H^0(X,\, TX) \, \longrightarrow\,
H^1(X,\, {\mathcal O}_X)\, \stackrel{h}{\longrightarrow}\nonumber\\
H^1(X,\, \text{At}(K^{1/2}_X))
\, \stackrel{\xi}{\longrightarrow}\, H^1(X,\, TX) \, \longrightarrow\, H^2(X,\, {\mathcal O}_X)\,=\, 0\label{e17}
\end{gather}
be the long exact sequence of cohomologies associated to it.
The above observation implies that there is a canonical homomorphism
\begin{equation}\label{e16}
\phi\, :\, H^1(X,\, TX) \, \longrightarrow\, H^1(X,\, \text{At}(K^{1/2}_X))
\end{equation}
that sends an infinitesimal deformation of $X$ to the corresponding infinitesimal deformation of
the pair $(X,\, K^{1/2}_X)$. In particular, we have $\xi\circ\phi\,=\, \text{Id}_{H^1(X,\, TX)}$, where
$\xi$ is the homomorphism in \eqref{e17}.

An alternative description of the homomorphism $\phi$ in \eqref{e16} is the following.
Let $\{U_i\}_{i\in I}$ be a covering of $X$ by open subsets, and let
$$
\lambda_{i,j}\, :\, U_{ij} \, =\,U_i\cap U_j \, \longrightarrow\, TU_{ij}\, ,\, \ \ i,\, j\, \in\, I,
$$
be a $1$-cocycle giving an element of $H^1(X,\, TX)$. Then $\lambda_{i,j}$ acts on $H^0(U_{ij},\,
K^{1/2}_X)$ by Lie derivation. To define the operation Lie derivation, take a locally defined holomorphic vector
field $v$ and a locally defined holomorphic section $s$ of $K^{1/2}_X$; so $s\otimes s$ is a locally defined
holomorphic $1$-from on $X$. Now define $L_vs$ by the equation
\begin{equation}\label{eld}
s\otimes L_v s \,=\, \frac{1}{2} L_v (s\otimes s)\,=\, \frac{1}{2}\big(i_vd(s\otimes s)+ d((s\otimes s)(v))\big)\, .
\end{equation}
Using this action by Lie derivation, $\{\lambda_{i,j}\}$ is considered as a $1$-cocycle with values in
$\text{At}(K^{1/2}_X)$. The corresponding element of $H^1(X,\, \text{At}(K^{1/2}_X))$ is the image, under
the homomorphism $\phi$ in \eqref{e16}, of the cohomology class of $\{\lambda_{i,j}\}\,\in\, H^1(X,\, TX)$.

\begin{lemma}\label{lem2}
For any holomorphic vector bundle $V$ on $X$, there is a natural isomorphism
$$
\zeta\, :\, H^1(X,\, {\rm At}(V)) \,\stackrel{\sim}{\longrightarrow}\, H^1(X,\, {\rm At}(V\otimes K^{-1/2}_X))\, .
$$
\end{lemma}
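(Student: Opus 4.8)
The plan is to construct $\zeta$ explicitly at the level of \v{C}ech cocycles, exploiting the canonical Lie-derivative lift of holomorphic vector fields to $K^{\pm 1/2}_X$ furnished by the theta characteristic (the same device underlying the homomorphism $\phi$ in \eqref{e16}). First I would record the one structural input: since tensoring by a line bundle does not change the endomorphism bundle, $\text{End}(V\otimes K^{-1/2}_X)\,=\,\text{End}(V)$, so the two Atiyah sequences in \eqref{e11} for $V$ and for $V\otimes K^{-1/2}_X$ are extensions of $TX$ by the same bundle $\text{End}(V)$. Because $g\,\geq\,2$ gives $H^0(X,\,TX)\,=\,0$ and $H^2$ vanishes on a curve, each sequence yields a short exact sequence $0\,\to\,H^1(X,\,\text{End}(V))\,\to\,H^1(X,\,\text{At}(-))\,\to\,H^1(X,\,TX)\,\to\,0$; the isomorphism $\zeta$ I seek will be the identity on the sub $H^1(X,\,\text{End}(V))$ and induce the identity on the quotient $H^1(X,\,TX)$, which is what makes it canonical. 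I should note that no sheaf-level isomorphism $\text{At}(V)\,\cong\,\text{At}(V\otimes K^{-1/2}_X)$ exists in general, since the two Atiyah classes differ by $\text{Id}_V\otimes\text{at}(K^{-1/2}_X)\,\neq\,0$; the construction must genuinely live at the cohomology level, where the rigidity of the theta characteristic supplies a canonical lift.

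To define $\zeta$, represent a class in $H^1(X,\,\text{At}(V))$ by a $1$-cocycle $\{\theta_{ij}\}$, where each $\theta_{ij}$ is a first order operator on $V$ over $U_{ij}$ with scalar symbol $\sigma(\theta_{ij})\,=\,\lambda_{ij}\,\in\, TX(U_{ij})$, so that $\{\lambda_{ij}\}$ is the associated cocycle for $TX$. As in the alternative description of $\phi$ given around \eqref{eld}, the theta characteristic provides a canonical first order operator $L^{-1/2}_{\lambda_{ij}}$ on $K^{-1/2}_X$ with symbol $\lambda_{ij}$ (Lie derivation). Then $\theta_{ij}\otimes\text{Id}+\text{Id}\otimes L^{-1/2}_{\lambda_{ij}}$ is a first order operator on $V\otimes K^{-1/2}_X$ with scalar symbol $\lambda_{ij}$, hence a section of $\text{At}(V\otimes K^{-1/2}_X)$ over $U_{ij}$, and I set $\zeta(\{\theta_{ij}\})\,:=\,\{\theta_{ij}\otimes\text{Id}+\text{Id}\otimes L^{-1/2}_{\lambda_{ij}}\}$. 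Since Lie derivation is additive in the vector field, $\{L^{-1/2}_{\lambda_{ij}}\}$ is itself a cocycle for $\text{At}(K^{-1/2}_X)$ (this is precisely the content of the $\phi$ construction), and combined with the cocycle condition for $\{\theta_{ij}\}$ this makes $\zeta(\{\theta_{ij}\})$ a cocycle; the same bookkeeping sends coboundaries to coboundaries, so $\zeta$ descends to cohomology and is independent of the cover. Conceptually $\zeta$ is the map on infinitesimal deformations (Section \ref{sec4}) that tensors the deforming bundle by the canonically co-moving line bundle $K^{-1/2}_X$.

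For invertibility I would run the identical construction with $K^{1/2}_X$ in place of $K^{-1/2}_X$, producing $\zeta'\,:\,H^1(X,\,\text{At}(V\otimes K^{-1/2}_X))\,\to\,H^1(X,\,\text{At}(V))$ using the canonical Lie derivative $L^{1/2}$ on $K^{1/2}_X$ and the identification $(V\otimes K^{-1/2}_X)\otimes K^{1/2}_X\,=\,V$. Composing, a cocycle $\{\theta_{ij}\}$ is sent to $\{\theta_{ij}\otimes\text{Id}+\text{Id}\otimes(L^{-1/2}_{\lambda_{ij}}\otimes\text{Id}+\text{Id}\otimes L^{1/2}_{\lambda_{ij}})\}$ acting on $V\otimes K^{-1/2}_X\otimes K^{1/2}_X$. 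Here I would invoke that the theta-characteristic Lie derivative is compatible with tensor products and with the canonical trivialization $K^{-1/2}_X\otimes K^{1/2}_X\,=\,{\mathcal O}_X$, under which the combined operator $L^{-1/2}_{\lambda_{ij}}\otimes\text{Id}+\text{Id}\otimes L^{1/2}_{\lambda_{ij}}$ becomes the de Rham derivative along $\lambda_{ij}$, which annihilates the constant trivializing section. Thus the composite cocycle is exactly $\{\theta_{ij}\}$, giving $\zeta'\circ\zeta\,=\,\text{Id}$ and symmetrically $\zeta\circ\zeta'\,=\,\text{Id}$.

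The main obstacle is the verification in the last paragraph: making precise that the Lie-derivative lift attached to the theta characteristic is compatible with tensor products and restricts, on $K^{-1/2}_X\otimes K^{1/2}_X\,=\,{\mathcal O}_X$, to the trivial (de Rham) operator, so that $\zeta$ and $\zeta'$ are genuinely mutually inverse rather than differing by a unipotent shift along $H^1(X,\,\text{End}(V))$. Everything else—the cocycle and coboundary bookkeeping, and the observation that $\zeta$ is the identity on the ends $H^1(X,\,\text{End}(V))$ and $H^1(X,\,TX)$—is routine once the canonical lift from the $\phi$ construction is in hand.
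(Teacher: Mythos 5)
Your construction is essentially the paper's own proof: your cocycle formula $\theta_{ij}\otimes\mathrm{Id}+\mathrm{Id}\otimes L^{-1/2}_{\lambda_{ij}}$ is exactly the paper's $\Delta_{ij}(v\otimes\kappa)\,=\,\delta_{i,j}(v)\otimes\kappa+v\otimes L_{\sigma\circ\delta_{i,j}}\kappa$, built from the same Lie-derivative lift \eqref{eld} attached to the theta characteristic. The paper settles invertibility by the deformation-theoretic description (tensoring the deforming bundle by the co-moving theta line bundle, which is undone by tensoring with its dual), whereas you give the explicit inverse via $K^{1/2}_X$ (and in fact your observation that $\zeta$ is the identity on the sub $H^1(X,\,\mathrm{End}(V))$ and on the quotient $H^1(X,\,TX)$ already yields the isomorphism by the five lemma); both verifications are correct.
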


\begin{proof}
Fix $X$, $V$ and a theta characteristic $K^{1/2}_X$ on $X$.
Take any infinitesimal deformation $v\, \in\, H^1(X,\, {\rm At}(V))$ of $(X,\, V)$. It corresponds to a family
of curves ${\mathcal X}\, \longrightarrow\, {\rm Spec}\, {\mathbb C}[\epsilon]/\epsilon^2$ together
with a holomorphic vector bundle ${\mathcal V}\, \longrightarrow\, {\mathcal X}$. The fiber of $\mathcal X$
over ${\rm Spec}\,\mathbb C$ is $X$. Let $v'\, \in\, H^1(X,\,TX)$ be the image of $v$ under the homomorphism
$H^1(X,\, {\rm At}(V))\, \longrightarrow\, H^1(X,\,TX)$ in \eqref{e11b}. The image
of $v'$ under the homomorphism $\phi$ in \eqref{e16} gives
a relative theta line bundle ${\mathcal K}^{1/2}\, \longrightarrow\, {\mathcal X}$, whose
restriction to $X$ is the chosen theta characteristic $K^{1/2}_X$. Now the
pair $({\mathcal X},\, {\mathcal V}\otimes ({\mathcal K}^{1/2})^*)$ corresponds to an element
$v''\, \in\, H^1(X,\, {\rm At}(V\otimes K^{-1/2}_X))$. The map
$$
\zeta\,\, :\,\, H^1(X,\, {\rm At}(V)) \,\longrightarrow\, H^1(X,\, {\rm At}(V\otimes K^{-1/2}_X))\, ,
\ \ v\, \longmapsto\, v''
$$
constructed above is evidently an isomorphism.

The above homomorphism $\zeta$ can be described in terms of the cocycles as follows.
Let $\{U_i\}_{i\in I}$ be a covering of $X$ by open subsets, and let
$$
\delta_{i,j}\, :\, U_{ij} \, =\,U_i\cap U_j \, \longrightarrow\, {\rm At}(V)\big\vert_{U_{ij}}
\, ,\, \ \ i,\, j\, \in\, I,
$$
be a $1$-cocycle giving an element $c\, \in\, H^1(X,\, {\rm At}(V))$. So
$$
\sigma\circ\delta_{i,j}\, :\, U_{ij} \, =\,U_i\cap U_j \, \longrightarrow\, TU_{ij}\, ,\, \ \ i,\, j\, \in\, I,
$$
is a $1$-cocycle with values in $TX$, where $\sigma$ is the projection in \eqref{e11}. Its cohomology
class in $H^1(X,\, TX)$ coincides with the image of $c$ by the homomorphism in \eqref{e11b}.
For any holomorphic sections
$$
v\,\in\, H^0\left(U_{ij},\, V\big\vert_{U_{ij}}\right),\ \ \kappa\, \in\, H^0\left(U_{ij},\,
K^{-1/2}_X\big\vert_{U_{ij}}\right)\, ,
$$
define $\Delta_{ij}(v\otimes \kappa)\,=\, \delta_{i,j}(v)\otimes\kappa + v\otimes L_{\sigma\circ\delta_{i,j}}\kappa$,
where $L_{\sigma\circ\delta_{i,j}}$ is a the Lie derivation by the vector field
$\sigma\circ\delta_{i,j}$ (Lie derivation is defined in \eqref{eld}); recall that $\delta_{i,j}$ is a first order holomorphic differential
operator $V\big\vert_{U_{ij}}\, \longrightarrow\, V\big\vert_{U_{ij}}$. It is straightforward to check that
$$
\Delta_{ij}((fv)\otimes \kappa)\,=\,\Delta_{ij}(v\otimes (f\kappa))
$$
for any holomorphic function $f$ on $U_{ij}$. Consequently, we have
$$
\Delta_{ij}\,\,\in\,\, H^0(U_{ij},\, {\rm At}(V\otimes K^{-1/2}_X))\, .
$$
The homomorphism $\zeta$ takes the cohomology class $c$ of $\{\delta_{i,j}\}$ to the
cohomology class of $\{\Delta_{ij}\}$ in $H^1(X,\, {\rm At}(V\otimes K^{-1/2}_X))$.
\end{proof}

Now take a triple $(X,\, V,\, D)$, where $X$ is a compact Riemann surface of
genus $g$, $V$ is a holomorphic vector bundle on $X$, and $D$ is a holomorphic
connection on $V$. We recall that holomorphic connections on $V$ are precisely the holomorphic splittings
of the Atiyah exact sequence in \eqref{e11} \cite{At}.
The connection $D$ on $V$ produces a holomorphic differential operator of order one
\begin{equation}\label{f1}
\widetilde{D}\,\, :\,\, \text{At}(V)\, \longrightarrow\,\text{End}(V)\otimes K_X
\end{equation}
which is constructed as follows. Let $p_1\,:\, \text{At}(V)\, \longrightarrow\, \text{End}(V)$
be the holomorphic projection defined by the holomorphic splitting of \eqref{e11} given by $D$. Let
$$
D'\, :\, \text{End}(V) \, \longrightarrow\, \text{End}(V)\otimes K_X
$$
be the holomorphic connection on $\text{End}(V)$ induced by the connection $D$ on $V$. Then $$\widetilde{D}\,=\,
D'\circ p_1\, .$$ Note that $\widetilde{D}$ is not ${\mathcal O}_X$--linear; it is a differential operator of
order one.

Let
\begin{equation}\label{f2}
{\mathcal A}_\bullet\,\, :\,\,{\mathcal A}_0\,:=\,\text{At}(V)\,\stackrel{\widetilde{D}}{\longrightarrow}
\, {\mathcal A}_1\,:=\, \text{End}(V)\otimes K_X
\end{equation}
be the two term complex of sheaves on $X$, where $\widetilde{D}$ is the differential
operator in \eqref{f1}, and ${\mathcal A}_i$ is at the $i$-th position. The infinitesimal deformations
of the triple $(X,\, V,\, D)$ are parametrized by the first hypercohomology
\begin{equation}\label{eh}
{\mathbb H}^1({\mathcal A}_\bullet)\, ,
\end{equation}
where ${\mathcal A}_\bullet$ is the complex constructed in \eqref{f2} \cite[p.~ 1415, Proposition 4.4]{Ch}
(see also \cite{BHH}).

Consider the homomorphisms of complexes
$$
\begin{matrix}
&& 0 & \longrightarrow & \text{End}(V)\otimes K_X\\
&& \Big\downarrow &&\,\,\,\,\,\, \Big\downarrow{\rm id}\\
{\mathcal A}_\bullet & : & \text{At}(V)& \stackrel{\widetilde{D}}{\longrightarrow} &
\text{End}(V)\otimes K_X\\
&&\,\,\,\,\,\, \Big\downarrow{\rm id} && \Big\downarrow\\
&& \text{At}(V) & \longrightarrow & 0
\end{matrix}
$$
It induces homomorphisms
$${\mathbb H}^1({\mathcal A}_\bullet)\,\longrightarrow\, H^1(X,\, {\rm At}(V))\ \ \text{ and }\ \
H^0(X,\, \text{End}(V)\otimes K_X)\,\longrightarrow\,{\mathbb H}^1({\mathcal A}_\bullet)\,.
$$
The first homomorphism corresponds to the forgetful map that sends an infinitesimal deformation of the triple
$(X,\, V,\, D)$ to the infinitesimal deformation of the pair $(X,\, V)$ obtained from it by simply forgetting
the connection. The second homomorphism corresponds to the map that sends an infinitesimal deformation of the
connection $D$ to the infinitesimal deformation of the triple $(X,\, V,\, D)$ produced by it by keeping
the pair $(X,\, V)$ fixed.

\section{Affine space structure}\label{sec5}

As before, $F$ is a stable vector bundle on $X$ of rank $r$ and degree zero.
Set $V\,=\, F\otimes K^{-1/2}_X$, and consider the vector space
$$
H^0\left(X,\, \,\,\frac{J^1(F\otimes K^{-1/2}_X)\otimes F^*\otimes K^{1/2}_X}{\text{ad}
(F\otimes K^{-1/2}_X)\otimes K_X} \otimes K_X\right)
$$
constructed in \eqref{e15}. Recall the quotient space ${\mathcal D}(F)$ in \eqref{qe}.

\begin{proposition}\label{prop2}
The space ${\mathcal D}(F)$ in \eqref{qe} is an affine space modeled on the above vector space
\begin{equation}\label{e18}
H^0\left(X,\,\,\, \frac{J^1(F\otimes K^{-1/2}_X)\otimes F^*\otimes K^{1/2}_X}{{\rm ad}(F\otimes K^{-1/2}_X)
\otimes K_X}\otimes K_X\right)\, .
\end{equation}
\end{proposition}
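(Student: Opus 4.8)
The plan is to view $\mathcal D(F)$ as a quotient of the affine space $\text{Conn}(E)$, where $E = J^1(F\otimes K^{-1/2}_X)$, and to propagate the affine structure through the quotients. By Corollary \ref{cor1} we have $\widetilde{\mathcal C}(F) = \text{Conn}(E)$, an affine space modeled on $A := H^0(X,\text{End}(E)\otimes K_X)$. By \eqref{aj}, $\text{Aut}(E) = \mathbb C^\star\times U$ with $U := H^0(X,\text{End}(F)\otimes K_X)$ embedded via $v\mapsto \text{Id}_E+\varpi(v)$; conjugation by the scalars $\mathbb C^\star$ is trivial, so only $U$ acts. As in Corollary \ref{cor3} one checks that the translation action of $\psi(H^0(X,\text{End}(F)\otimes K^{\otimes 2}_X))$ commutes with the conjugation action of $U$ (both $\psi(\beta)\circ\varpi(v)$ and $(\varpi(v)\otimes\text{Id})\circ\psi(\beta)$ vanish since $q_0\circ\iota = 0$), so the quotients defining $\mathcal D(F)$ may be taken in either order, and the residual $H^0(X,K^{\otimes 2}_X)$-action also commutes with $U$.

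The decisive point is the conjugation action of $U$. Writing $\Phi_v := \varpi(v)$, which satisfies $\Phi_v^2 = 0$, $\Phi_v|_S = 0$, and $\text{image}(\Phi_v)\subset S$, a Leibniz computation gives for $D\in\text{Conn}(E)$
\[
(\text{Id}_E+\Phi_v)\circ D\circ(\text{Id}_E+\Phi_v)^{-1}-D \;=\; -D^{\text{End}}(\Phi_v)-(\Phi_v\otimes\text{Id})\circ D\circ\Phi_v \;\in\; A,
\]
where $D^{\text{End}}$ is the induced connection on $\text{End}(E)$. Since $\mathbf j = \psi$ (compare \eqref{e5} and \eqref{jl}), Lemma \ref{lem5} identifies $\psi(H^0(X,\text{End}(F)\otimes K^{\otimes 2}_X))$ with $\ker(\varpi_1)_\ast\subset A$. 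Two facts control the displayed difference modulo this kernel. First, because the second fundamental form $(q_0\otimes\text{Id})\circ D\circ\iota$ equals $\text{Id}_S$ for \emph{every} connection (Corollary \ref{cor1}), the term $(\Phi_v\otimes\text{Id})\circ D\circ\Phi_v$ is independent of $D$ and factors through $q_0$ with image in $F\otimes K^{3/2}_X$, hence lies in $\text{image}(\mathbf j) = \ker(\varpi_1)_\ast$. Second, for any $\omega\in A$ the commutator $[\omega,\Phi_v]$ also lies in $\text{image}(\mathbf j)$, because Lemma \ref{lem3} forces $\omega$ to preserve $S$, giving $\omega\circ\Phi_v = \mathbf j(\omega|_S\circ v)$ and $\Phi_v\circ\omega = \mathbf j(v\circ\omega^{\text{quot}})$. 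Consequently $D^{\text{End}}(\Phi_v)\bmod\ker(\varpi_1)_\ast$ is independent of $D$, so on $\text{Conn}(E)/\ker(\varpi_1)_\ast$ the group $U$ acts by the translation $L\colon U\to H^0(X,\text{End}(F)\otimes K_X)^{\oplus 2}$, $v\mapsto-(\varpi_1)_\ast(D^{\text{End}}(\Phi_v))$. Evaluating the quotient component of $L(v)$ via $D_S = \text{Id}_S$ shows it equals $\pm v$, so $L$ is injective.

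Assembling the quotients: by Corollary \ref{cor1} and Lemma \ref{lem5} the space
\[
\mathcal C'(F)\;:=\;\text{Conn}(E)/\mathbf j\bigl(H^0(X,\text{End}(F)\otimes K^{\otimes 2}_X)\bigr)
\]
is affine, modeled on $H^0(X,\text{End}(F)\otimes K_X)^{\oplus 2}$, and $U$ acts on it by the free translation $L$. Since $\mathcal C'(F) = \mathcal C(F)/H^0(X,K^{\otimes 2}_X)$ (the identity recorded after Definition \ref{def3}) and the $U$- and $H^0(X,K^{\otimes 2}_X)$-actions commute, we get $\mathcal D(F)/H^0(X,K^{\otimes 2}_X) = \mathcal C'(F)/U$, which is therefore affine, modeled on $H^0(X,\text{End}(F)\otimes K_X)^{\oplus 2}/L(U)\cong H^0(X,\text{End}(F)\otimes K_X)$ (via the first projection, as $L(U)$ is a graph over the second factor). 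Finally the projection $\mathcal D(F)\to\mathcal D(F)/H^0(X,K^{\otimes 2}_X)$ is a torsor under the vector group $H^0(X,K^{\otimes 2}_X)$ over an affine base; hence $\mathcal D(F)$ is an affine space whose modeling space $M$ sits in an extension $0\to H^0(X,K^{\otimes 2}_X)\to M\to H^0(X,\text{End}(F)\otimes K_X)\to 0$.

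It remains to identify $M$ with the space in \eqref{e18}. Setting $V = F\otimes K^{-1/2}_X$, Serre duality \eqref{e15} shows \eqref{e18} equals $H^1(X,\text{At}(V))^\ast$, and the Atiyah sequence \eqref{e11} for $V$ (with $H^0(X,TX) = 0$ since $g\ge 2$) dualizes to $0\to H^0(X,K^{\otimes 2}_X)\to H^0(X,\text{At}(V)^\ast\otimes K_X)\to H^0(X,\text{End}(F)\otimes K_X)\to 0$, the same extension as for $M$. I expect the main obstacle to lie exactly here: one must show these two extensions are \emph{canonically} the same, not merely of equal dimension — that is, the torsor-difference on $\mathcal D(F)$ must be matched, under Serre duality, with the pairing against $H^1(X,\text{At}(V))$. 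Concretely, the quotient term $H^0(X,\text{End}(F)\otimes K_X)$ records the deformation of $F$ (through the symbol data captured by $\varpi_1$ and $L$), while the sub-term $H^0(X,K^{\otimes 2}_X)$ records the deformation of $X$ together with its theta characteristic, via the Lie-derivative cocycle \eqref{eld} and the splitting $\phi$ of \eqref{e16}; checking that these match the Atiyah-sequence description underlying \eqref{e18} completes the proof.
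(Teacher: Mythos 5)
Your analysis of the automorphism action is correct and in fact supplies details the paper leaves implicit: the computation $(\mathrm{Id}_E+\Phi_v)\circ D\circ(\mathrm{Id}_E+\Phi_v)^{-1}-D=-D^{\mathrm{End}}(\Phi_v)-(\Phi_v\otimes\mathrm{Id})\circ D\circ\Phi_v$, the observation that both correction terms lie in $\mathrm{image}(\mathbf j)$ because every connection has second fundamental form $\mathrm{Id}_S$ and every $\omega$ preserves the subbundle (Lemma \ref{lem3}), and the injectivity of the induced translation $L$ are all sound, and together they justify the transitivity assertion that the paper disposes of with a one--line appeal to Lemma \ref{lem5}. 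The problem is the endgame. What the proposition asserts is a canonical simply transitive action of the specific vector space \eqref{e18} on ${\mathcal D}(F)$, and your argument never produces one: you obtain ${\mathcal D}(F)$ as an $H^0(X,K^{\otimes 2}_X)$--torsor over an affine space modeled on $H^0(X,{\rm End}(F)\otimes K_X)$, assert that it is therefore an affine space modeled on some extension $M$ of the quotient by the sub, and then explicitly leave open the identification of $M$ with \eqref{e18}. That identification is not a routine check of two abstract extensions of equal dimension; it is the actual content of the proposition, and a torsor over an affine base does not by itself carry a distinguished modeling vector space until the action of the total extension is exhibited.

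The missing idea, which the paper uses to avoid this issue entirely, is to act by the \emph{whole} space at once rather than by sub and quotient separately. One embeds $H^0(X,\,J^1(F\otimes K^{-1/2}_X)\otimes F^*\otimes K^{3/2}_X)$ into $H^0(X,\,{\rm End}(J^1(F\otimes K^{-1/2}_X))\otimes K_X)$ by $\eta\mapsto\eta\circ q_0$ (the map \eqref{l1}, whose image is exactly the sections killing the subbundle $F\otimes K^{1/2}_X$ of \eqref{e8b}), lets this space translate ${\rm Conn}(J^1(F\otimes K^{-1/2}_X))=\widetilde{\mathcal C}(F)$, and checks that the subspace $H^0(X,{\rm ad}(F)\otimes K^{\otimes 2}_X)$ acts trivially on ${\mathcal C}(F)$, yielding a free action of the quotient ${\mathcal Q}(F)$ on ${\mathcal C}(F)$ and then on ${\mathcal D}(F)$. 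The identification of ${\mathcal Q}(F)$ with \eqref{e18} is then immediate and canonical from the short exact sequence of sheaves \eqref{j1} together with the vanishing $H^1(X,{\rm ad}(F\otimes K^{-1/2}_X)\otimes K^{\otimes 2}_X)=0$ (Serre duality plus stability), which gives \eqref{e19}; no matching of extensions or duality pairings is needed. Your computations then serve precisely to prove transitivity of this action on ${\mathcal D}(F)$. Without this step your argument establishes the correct dimension and a compatible filtration, but not the statement.
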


\begin{proof}
Using the natural inclusion map $\iota$ in \eqref{e8b}, we have
\begin{gather}
\text{ad}(F\otimes K^{-1/2}_X)\otimes K^{\otimes 2}_X\,\hookrightarrow\,\text{End}
(F\otimes K^{-1/2}_X)\otimes K^{\otimes 2}_X\,=\,\text{End}(F)\otimes K^{\otimes 2}_X\nonumber\\
=\, (F\otimes K^{1/2}_X)\otimes (F^*\otimes K^{3/2}_X)\,
\xrightarrow{\iota\otimes {\rm Id}_{F^*\otimes K^{3/2}_X}}\,
J^1(F\otimes K^{-1/2}_X)\otimes F^*\otimes K^{3/2}_X\nonumber
\end{gather}
(see also \eqref{e10}). It produces a short exact sequence of vector bundles on $X$
\begin{equation}\label{j1}
0\, \longrightarrow\, \text{ad}(F\otimes K^{-1/2}_X)\otimes K^{\otimes 2}_X\, \longrightarrow\, 
J^1(F\otimes K^{-1/2}_X)\otimes F^*\otimes K^{3/2}_X
\end{equation}
$$
\longrightarrow\, 
\frac{J^1(F\otimes K^{-1/2}_X)\otimes F^*\otimes K^{1/2}_X}{\text{ad}(F\otimes K^{-1/2}_X)
\otimes K_X}\otimes K_X \, \longrightarrow\, 0\, .
$$
Since $F$ is stable, we have $$H^1(X,\, \text{ad}(F\otimes K^{-1/2}_X)\otimes K^{\otimes 2}_X)\,=\,
H^0(X,\, \text{ad}(F)\otimes TX)^*\,=\, 0$$
because $\text{ad}(F)\otimes TX$ is polystable of negative degree (recall that $g\, \geq\, 2$). Therefore,
the long exact sequence of cohomologies associated to this short exact sequence in \eqref{j1}
gives a short exact sequence 
\begin{equation}\label{e19}
0\, \longrightarrow\, H^0(X,\, \text{ad}(F\otimes K^{-1/2}_X)\otimes K^{\otimes 2}_X)\, \longrightarrow\,
H^0(X,\, J^1(F\otimes K^{-1/2}_X)\otimes F^*\otimes K^{3/2}_X)
\end{equation}
$$
\longrightarrow\,
H^0\left(X,\, \,\,\frac{J^1(F\otimes K^{-1/2}_X)\otimes F^*\otimes K^{1/2}_X}{\text{ad}
(F\otimes K^{-1/2}_X)\otimes K_X} \otimes K_X\right)
\, \longrightarrow\, 0\, .
$$
Consequently, the vector space in \eqref{e18} coincides with the quotient space
$$
{\mathcal Q}(F)\, =:\,
\frac{H^0(X,\, J^1(F\otimes K^{-1/2}_X)\otimes F^*\otimes K^{3/2}_X)}{H^0
(X,\, \text{ad}(F\otimes K^{-1/2}_X)\otimes K^{\otimes 2}_X)}
$$
\begin{equation}\label{e24}
=\,
\frac{H^0(X,\, J^1(F\otimes K^{-1/2}_X)\otimes F^*\otimes K^{3/2}_X)}{
H^0(X,\, \text{ad}(F)\otimes K^{\otimes 2}_X)}\, .
\end{equation}

Now, we have a natural inclusion map
$$
H^0(X,\, J^1(F\otimes K^{-1/2}_X)\otimes F^*\otimes K^{3/2}_X)\,=\,
H^0(X,\, {\rm Hom}(F\otimes K^{-/2}_X,\, J^1(F\otimes K^{-1/2}_X)\otimes K_X))
$$
\begin{equation}\label{l1}
\, \hookrightarrow\, H^0(X,\,{\rm Hom}(J^1(F\otimes K^{-1/2}_X),\, J^1(F\otimes K^{-1/2}_X)\otimes K_X))
\end{equation}
that sends any homomorphism $\eta\,:\,F\otimes K^{-1/2}_X\, \longrightarrow\,
J^1(F\otimes K^{-1/2}_X)\otimes K_X$ over $X$ to
$$
\eta\circ q_0\, :\, J^1(F\otimes K^{-1/2}_X) \, \longrightarrow\, J^1(F\otimes K^{-1/2}_X)\otimes K_X\, ,
$$
where $q_0$ is the
projection in \eqref{e8b}. Using the inclusion map in \eqref{l1}, the natural action of the vector space
$H^0(X,\,{\rm Hom}(J^1(F\otimes K^{-1/2}_X),\, J^1(F\otimes K^{-1/2}_X)\otimes K_X))$ on the
affine space ${\rm Conn}(J^1(F\otimes K^{-1/2}_X))$ (defined in \eqref{c0}) restricts to an action of
$H^0(X,\, J^1(F\otimes K^{-1/2}_X)\otimes F^*\otimes K^{3/2}_X)$ on ${\rm Conn}(J^1(F\otimes K^{-1/2}_X))$.

Recall that Corollary \ref{cor1} says that ${\rm Conn}(J^1(F\otimes K^{-1/2}_X))\,=\,
\widetilde{\mathcal C}(F)$. Therefore, the vector space $H^0(X,\, J^1(F\otimes K^{-1/2}_X)
\otimes F^*\otimes K^{3/2}_X)$ acts on $\widetilde{\mathcal C}(F)$.

It can be shown that the above action of $H^0(X,\, J^1(F\otimes K^{-1/2}_X)
\otimes F^*\otimes K^{3/2}_X)$ on $\widetilde{\mathcal C}(F)$ produces an action of $H^0(X,\,
J^1(F\otimes K^{-1/2}_X)\otimes F^*\otimes K^{3/2}_X)$ on the quotient space ${\mathcal C}(F)$
in Definition \ref{def3}. Indeed, the action of $H^0(X,\, J^1(F\otimes K^{-1/2}_X)
\otimes F^*\otimes K^{3/2}_X)$ on $\widetilde{\mathcal C}(F)$, and also the quotient map
$\widetilde{\mathcal C}(F)\, \longrightarrow\, {\mathcal C}(F)$, are both constructed using the
action of $H^0(X,\, {\rm End}(J^1(F\otimes K^{-1/2}_X))\otimes K_X)$ on $\widetilde{\mathcal C}(F)$; as
$S\,=\, F\otimes K^{1/2}_X$ (see \eqref{e8}), we have $\text{End}(S)\,=\, \text{End}(F)\,=\,
\text{End}(F\otimes K^{-1/2}_X)$. Since the group
$H^0(X,\, {\rm End}(J^1(F\otimes K^{-1/2}_X))\otimes K_X)$ is abelian, the two
actions on $\widetilde{\mathcal C}(F)$ commute, and hence we get an action of $H^0(X,\,
J^1(F\otimes K^{-1/2}_X)\otimes F^*\otimes K^{3/2}_X)$ on ${\mathcal C}(F)$.

For the above action of $H^0(X,\, J^1(F\otimes K^{-1/2}_X)\otimes F^*\otimes K^{3/2}_X)$ on
${\mathcal C}(F)$, it is evident that the subspace
$$
H^0(X,\, \text{ad}(F)\otimes K^{\otimes 2}_X)\,=\,H^0(X,\, \text{ad}(F\otimes K^{-1/2}_X)\otimes K^{\otimes 2}_X)
$$
$$
\, \subset\,
H^0(X,\, J^1(F\otimes K^{-1/2}_X)\otimes F^*\otimes K^{3/2}_X)
$$
in \eqref{e19} acts trivially. Indeed, this follows from the fact that ${\mathcal C}(F)$ is the
quotient of $\widetilde{\mathcal C}(F)$ by the action of $H^0(X,\, \text{ad}(F)\otimes K^{\otimes 2}_X)$.
Thus we have an action on ${\mathcal C}(F)$ of the quotient space
${\mathcal Q}(F)$ in \eqref{e24}. This action of ${\mathcal Q}(F)$ on ${\mathcal C}(F)$
is free, because
\begin{itemize}
\item the action of $H^0(X,\,{\rm Hom}(J^1(F\otimes K^{-1/2}_X),\, J^1(F\otimes K^{-1/2}_X)\otimes K_X))$ on
${\rm Conn}(J^1(F\otimes K^{-1/2}_X))$ is free, and

\item ${\mathcal Q}(F)$ and ${\mathcal C}(F)$ are constructed by quotienting with the same subspace, namely
$$H^0(X,\, \text{ad}(F)\otimes K^{\otimes 2}_X)
\,\subset\, H^0(X,\, J^1(F\otimes K^{-1/2}_X)\otimes F^*\otimes K^{3/2}_X)$$
(see \eqref{e24} and Definition \ref{def3}).
\end{itemize}
Finally from the
structure of $\text{Aut}(J^1(F\otimes K^{-1/2}_X))$ shown in \eqref{aj} it follows that the
above action of ${\mathcal Q}(F)$ on ${\mathcal C}(F)$ produces an action of
${\mathcal Q}(F)$ on the quotient space
${\mathcal D}(F)$ in \eqref{qe}.

Using Lemma \ref{lem5} it follows that the above action of
${\mathcal Q}(F)$ on ${\mathcal D}(F)$ is transitive. This action is
also free. Indeed, this follows from \eqref{aj} and the above observation that the
action of ${\mathcal Q}(F)$ on ${\mathcal C}(F)$
is free.

We noted earlier that ${\mathcal Q}(F)$ is identified with the vector space in \eqref{e18}. Therefore,
the above action of ${\mathcal Q}(F)$ on ${\mathcal D}(F)$ produces an action, on ${\mathcal D}(F)$,
of the vector space in \eqref{e18}. Now the proposition follows from the above properties of the
action of ${\mathcal Q}(F)$ on ${\mathcal D}(F)$.
\end{proof}

Proposition \ref{prop2} and Lemma \ref{lem2} together give the following:

\begin{corollary}\label{cor4}
The space ${\mathcal D}(F)$ is an affine space modeled on the complex vector space $H^1(X,\, {\rm At}(F))^*$.
\end{corollary}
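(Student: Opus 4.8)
The plan is to extract the model vector space from Proposition \ref{prop2} and then identify it with $H^1(X,\, {\rm At}(F))^*$ purely by chasing the Serre-duality formula \eqref{e15} together with the isomorphism $\zeta$ of Lemma \ref{lem2}; no new geometric input is required, since an affine space modeled on a vector space $W$ is equally modeled on any $W'$ once a linear isomorphism $W\,\cong\, W'$ is given, and that isomorphism transports the torsor structure.

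First I would recall that Proposition \ref{prop2} exhibits ${\mathcal D}(F)$ as an affine space modeled on the vector space displayed in \eqref{e18}, namely
$$H^0\left(X,\,\,\, \frac{J^1(F\otimes K^{-1/2}_X)\otimes F^*\otimes K^{1/2}_X}{{\rm ad}(F\otimes K^{-1/2}_X)\otimes K_X}\otimes K_X\right)\, .$$
Thus it suffices to produce a canonical linear isomorphism between this space and $H^1(X,\, {\rm At}(F))^*$.

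Next I would apply the Serre-duality computation \eqref{e15} to the bundle $V\,=\, F\otimes K^{-1/2}_X$. Since $V^*\,=\, F^*\otimes K^{1/2}_X$, substituting into the right-hand side $H^0\left(X,\, \frac{J^1(V)\otimes V^*}{{\rm ad}(V)\otimes K_X}\otimes K_X\right)$ reproduces verbatim the numerator and the subbundle appearing in \eqref{e18}. Hence \eqref{e15} identifies the model space \eqref{e18} canonically with $H^1(X,\, {\rm At}(F\otimes K^{-1/2}_X))^*$. The only point needing care is keeping the twists straight in this substitution, so that one obtains an actual equality of the displayed spaces and not merely an abstract isomorphism.

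Finally I would invoke Lemma \ref{lem2} with the vector bundle taken to be $F$ itself, which furnishes a natural isomorphism
$$\zeta\, :\, H^1(X,\, {\rm At}(F)) \,\stackrel{\sim}{\longrightarrow}\, H^1(X,\, {\rm At}(F\otimes K^{-1/2}_X))\, .$$
Dualizing $\zeta$ and composing with the identification from the previous step gives a canonical isomorphism of the model space \eqref{e18} with $H^1(X,\, {\rm At}(F))^*$, which together with Proposition \ref{prop2} yields the corollary. I expect no genuine obstacle here: the entire content lies in correctly matching notation across \eqref{e15}, \eqref{e18} and Lemma \ref{lem2}, and the mildly delicate step is just verifying that the Serre dual of $H^1(X,\, {\rm At}(F\otimes K^{-1/2}_X))$ is literally the space \eqref{e18}.
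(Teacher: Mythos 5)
Your proposal is correct and follows essentially the same route as the paper: the paper's proof likewise combines Proposition \ref{prop2}, the Serre-duality identification \eqref{e15} applied to $V\,=\,F\otimes K^{-1/2}_X$, and the isomorphism $\zeta$ of Lemma \ref{lem2}. The substitution check you flag does come out as a literal equality of the displayed spaces, exactly as you expect.
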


\begin{proof}
{}From Lemma \ref{lem2} we know that $H^1(X,\, {\rm At}(F))^*\,=\, H^1(X,\, {\rm At}(F\otimes
K^{-1/2}_X))^*$. From \eqref{e15} it follows that
$$
H^1(X,\, {\rm At}(F\otimes K^{-1/2}_X))^*
$$
$$
\,=\, 
H^0\left(X,\, \left((J^1(F\otimes K^{-1/2}_X)\otimes F^*\otimes K^{1/2}_X)\big{/}(\text{ad}(F\otimes K^{-1/2}_X)
\otimes K_X)\right)\otimes K_X\right)\, .
$$
Now Proposition \ref{prop2} completes the proof.
\end{proof}

Let ${\mathcal M}^\theta_g$ denote the moduli space of irreducible smooth complex projective curves of genus $g$,
with $g\,\geq\, 2$, equipped with a theta characteristic.
It is a smooth orbifold of complex dimension $3g-3$. We note that ${\mathcal M}^\theta_g$ is not connected;
the loci of curves with an odd theta characteristic and curves with an even theta characteristic are
disconnected. For any fixed $r\, \geq\, 1$, let
\begin{equation}\label{e25}
\beta\, :\, {\mathcal B}_g(r)\, \longrightarrow\, {\mathcal M}^\theta_g
\end{equation}
be the moduli space of triples of the form $(X,\, K^{1/2}_X,\, F)$, where
\begin{itemize}
\item $X$ is a compact connected Riemann surface of genus $g$,

\item $K^{1/2}_X$ is a theta characteristic on $X$, and

\item $F$ is a stable vector bundle on $X$ of rank $r$ and degree zero.
\end{itemize}
The map $\beta$ in \eqref{e25} sends any $(X,\, K^{1/2}_X,\, F)$ to $(X,\, K^{1/2}_X)$
by forgetting $F$. The moduli space ${\mathcal B}_g(r)$ is a smooth orbifold of
complex dimension $3g+r^2(g-1)-2$. Note that ${\mathcal B}_g(r)$ is not connected
as ${\mathcal M}^\theta_g$ is not connected. Let
\begin{equation}\label{e26}
\gamma\, :\, {\mathcal H}_g(r)\, \longrightarrow\, {\mathcal B}_g(r)
\end{equation}
be the moduli space of quadruples of the form $(X,\, K^{1/2}_X,\, F,\, D)$, where
\begin{itemize}
\item $X$ is a compact connected Riemann surface of genus $g$,

\item $K^{1/2}_X$ is a theta characteristic on $X$,

\item $F$ is a stable vector bundle on $X$ of rank $r$ and degree zero, and

\item $D\, \in\,{\mathcal D}(F)$ (see \eqref{qe}).
\end{itemize}
The projection $\gamma$ in \eqref{e26} sends any $(X,\, K^{1/2}_X,\, F,\, D)$ to $(X,\, K^{1/2}_X,\, F)$
by simply forgetting $D$. The moduli space ${\mathcal H}_g(r)$ is a smooth orbifold of complex dimension
$2(3g+r^2(g-1)-2)$.

\begin{theorem}\label{thm1}
The algebraic fiber bundle $\gamma\, :\, {\mathcal H}_g(r)\, \longrightarrow\, {\mathcal B}_g(r)$
in \eqref{e26} is an algebraic affine bundle modeled on the holomorphic cotangent bundle
$T^*{\mathcal B}_g(r)$. In other words, ${\mathcal H}_g(r)$ is an algebraic torsor over
${\mathcal B}_g(r)$ for $T^*{\mathcal B}(r)$.
\end{theorem}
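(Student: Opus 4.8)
The plan is to reduce the statement to the fiberwise affine structure already established in Corollary \ref{cor4}, then identify the modeling vector space with the cotangent space of ${\mathcal B}_g(r)$, and finally upgrade the pointwise statement to an algebraic bundle by carrying out the constructions relatively over the moduli space.

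First I would compute the tangent space of ${\mathcal B}_g(r)$ at a point $b\,=\,(X,\, K^{1/2}_X,\, F)$. By the deformation theory recalled in Section \ref{sec4}, the infinitesimal deformations of the pair $(X,\, F)$ are parametrized by $H^1(X,\, \text{At}(F))$, and the Atiyah exact sequence \eqref{e11} for $V\,=\,F$ gives, using $H^0(X,\, TX)\,=\,0$ and $H^2(X,\, \text{End}(F))\,=\,0$ (as $\dim_{\mathbb C}X\,=\,1$ and $g\,\geq\,2$), a short exact sequence
$$
0\,\longrightarrow\, H^1(X,\, \text{End}(F))\,\longrightarrow\, H^1(X,\, \text{At}(F))\,\longrightarrow\, H^1(X,\, TX)\,\longrightarrow\,0\, .
$$
This exhibits $H^1(X,\, \text{At}(F))$ as an extension of the tangent space $H^1(X,\, TX)$ of ${\mathcal M}^\theta_g$ (recall the theta characteristic is rigid over $X$, so moving $(X,\, K^{1/2}_X)$ is the same as moving $X$, as encoded by $\phi$ in \eqref{e16}) by the tangent space $H^1(X,\, \text{End}(F))$ of the fiber of $\beta$. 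A dimension count, $1+r^2(g-1)+(3g-3)\,=\,3g+r^2(g-1)-2$, confirms this matches $\dim{\mathcal B}_g(r)$, and I would check that the identification $T_b{\mathcal B}_g(r)\,\cong\, H^1(X,\, \text{At}(F))$ is the canonical one coming from the universal family, so that dually $T^*_b{\mathcal B}_g(r)\,\cong\, H^1(X,\, \text{At}(F))^*$.

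Next I would invoke Corollary \ref{cor4}, which says the fiber $\gamma^{-1}(b)\,=\,{\mathcal D}(F)$ is an affine space modeled on $H^1(X,\, \text{At}(F))^*$; by the previous step this is precisely $T^*_b{\mathcal B}_g(r)$. Thus the free transitive action of the modeling vector space gives the desired affine structure fiberwise, with the correct linear model. (Note that Lemma \ref{lem2} is what allows one to pass between $\text{At}(F)$ and $\text{At}(F\otimes K^{-1/2}_X)$, the latter being the object naturally produced by the construction of ${\mathcal D}(F)$ through $J^1(F\otimes K^{-1/2}_X)$.)

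The main work — and the main obstacle — is to promote this pointwise statement to an algebraic affine bundle, i.e.\ to check that the affine structures vary algebraically and assemble into a $T^*{\mathcal B}_g(r)$--torsor. For this I would carry out all the constructions of Sections \ref{sec2}--\ref{sec5} relatively over (an \'etale or analytic cover of) ${\mathcal B}_g(r)$ carrying a universal triple $(X,\, K^{1/2}_X,\, F)$: form the relative jet bundle $J^1(F\otimes K^{-1/2}_X)$, the relative space of connections ${\rm Conn}(J^1(F\otimes K^{-1/2}_X))$ — which equals $\widetilde{\mathcal C}(F)$ by Corollary \ref{cor1} — and take in families the successive quotients of \eqref{qe}, realizing $\gamma$ as the relative ${\mathcal D}(F)$. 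Since every identification used above (Corollary \ref{cor4}, Lemma \ref{lem2}, and the deformation isomorphism of Step 1) is canonical and functorial in the family, the relative form of Corollary \ref{cor4} shows that over each chart ${\mathcal H}_g(r)$ is an affine bundle modeled on the relative cotangent bundle, with transition data given by translations by holomorphic sections of $T^*{\mathcal B}_g(r)$. This exhibits $\gamma$ as a $T^*{\mathcal B}_g(r)$--torsor. The delicate point I expect to require the most care is the canonicity and family-compatibility of Step 1, ensuring that the abstract isomorphism of the model $H^1(X,\, \text{At}(F))^*$ with $T^*_b{\mathcal B}_g(r)$ is the geometric one, so that the torsor is genuinely for $T^*{\mathcal B}_g(r)$ and not merely for an abstractly isomorphic bundle.
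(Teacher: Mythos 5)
Your proposal is correct and follows essentially the same route as the paper: identify $T^*_{(X,K^{1/2}_X,F)}{\mathcal B}_g(r)$ with $H^1(X,\,{\rm At}(F))^*$ via the deformation theory of Section \ref{sec4}, apply Corollary \ref{cor4} fiberwise, and appeal to the canonical, functorial nature of the constructions to make the statement work in families (the paper's proof also records surjectivity of $\gamma$, via Proposition \ref{prop1} and Corollary \ref{cor1}, which in your write-up is implicit in the nonemptiness of the affine spaces ${\mathcal D}(F)$).
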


\begin{proof}
{}From Proposition \ref{prop1} and Corollary \ref{cor1} we know that $\gamma$ is surjective.

Since the space of infinitesimal deformations of a pair $(X,\, K^{1/2}_X,\, F)\, \in\, {\mathcal B}_g(r)$ are
parametrized by $H^1(X,\, \text{At}(F))$ (see \eqref{e11b} and \eqref{e16}), we conclude that
$$
T^*_{(X,K^{1/2}_X,F)}{\mathcal B}_g(r)\,=\, H^1(X,\, \text{At}(F))^*\, .
$$
Therefore, the theorem follows from Corollary \ref{cor4}. It should be clarified that the canonical
nature of the action of $H^1(X,\, {\rm At}(F))^*$ on ${\mathcal D}(F)$ in Corollary \ref{cor4} ensures
that it extends to any given family of Riemann surfaces equipped with a theta characteristic and a
stable vector bundle of rank $r$ and degree zero. 
\end{proof}

On ${\mathcal B}_g(r)$ there is a natural reduced divisor $\Theta$ which is defined as follows:
\begin{equation}\label{e27}
\Theta\, :=\, \{(X,\, K^{1/2}_X,\, F)\,\in\, {\mathcal B}_g(r)\ \big\vert\ H^0(X,\, F\otimes K^{1/2}_X)\,
\not=\, 0\}\, \subset\, {\mathcal B}_g(r)\,.
\end{equation}
Since $\chi(X,\, F\otimes K^{1/2}_X)\,=\, 0$ (Riemann--Roch theorem), it follows that $H^0(X,\, F\otimes K^{1/2}_X)
\,\not=\, 0$ if and only if $H^1(X,\, F\otimes K^{1/2}_X)\,\not=\, 0$. For any $(X,\, K^{1/2}_X,\, F)\,\in\,\Theta$,
we have
\begin{equation}\label{o-1}
(X,\, K^{1/2}_X,\, F^*)\,\in\,\Theta\, ,
\end{equation}
because $H^0(X,\, F^*\otimes K^{1/2}_X)\,=\, H^1(X,\, F\otimes K^{1/2}_X)^*\,\not=\, 0$ by Serre duality.

\section{Differential operators, integral kernels and {\it r}--opers} 

\subsection{Differential operators and {\it r}--opers}

As before, take any $(X,\, K^{1/2}_X,\, F)\,\in\, {\mathcal B}_g(r)$ (see \eqref{e25}).
Let $D$ be a holomorphic connection on $E\, :=\, J^1(F\otimes K^{-1/2}_X)$; it
exists by Proposition \ref{prop1}. We know that the triple
$(E,\, F\otimes K^{1/2}_X,\, D)$ is a $r$--oper (see Corollary \ref{cor1} and Proposition \ref{prop1}),
where $F\otimes K^{1/2}_X$ is considered as a subbundle of $E$ using
$\iota$ in \eqref{e8b}. Consider the automorphism
\begin{equation}\label{l2}
\varphi\, :\, E\,=\, J^1(F\otimes K^{-1/2}_X)\,\stackrel{\sim}{\longrightarrow}\, J^1(F\otimes K^{-1/2}_X)\,=\, E
\end{equation}
constructed in \eqref{e3}. It should be clarified that the construction of $\varphi$ uses $D$
in an essential way; so $\varphi$ need not be the identity map of $E$. Imitating the construction of
$\varphi$ in \eqref{e3} we will construct a homomorphism
\begin{equation}\label{vp2}
\varphi_2\, :\, E\,=\, J^1(F\otimes K^{-1/2}_X)\, \longrightarrow\, J^2(F\otimes K^{-1/2}_X)\, .
\end{equation}
To construct $\varphi_2$,
take any $x\, \in\, X$ and any $v\, \in\, E_x$. As in the proof of Lemma \ref{lem1}, let
$\widehat{v}$ be the unique flat section of $E$ for the
connection $D$, defined on a simply connected open neighborhood $U\, \subset\, X$ of $x$, such that $\widehat{v}(x)\,
=\,v$. So $q_0(\widehat{v})\, \in\, H^0(U,\, F\otimes K^{-1/2}_X)$, where
$q_0$ is the projection in \eqref{e8b}. Now restrict $q_0(\widehat{v})$ to the second order infinitesimal
neighborhood of $x$; let $\widehat{v}'_2\, \in\, J^2(F\otimes K^{-1/2}_X)_x$ be the element obtained this
way from $q_0(\widehat{v})$. The homomorphism $\varphi_2$ sends any $v\, \in\, E_x$, $x\, \in\, X$,
to $\widehat{v}'_2\, \in\, J^2(F\otimes K^{-1/2}_X)_x$ constructed above from it.

Let
\begin{equation}\label{2j}
0\, \longrightarrow\, F\otimes K^{-1/2}_X\otimes K^{2}_X\,=\,F\otimes K^{3/2}_X \, \xrightarrow{\iota_2}\,
J^2(F\otimes K^{-1/2}_X)\, \xrightarrow{q_2}\, J^1(F\otimes K^{-1/2}_X)\, \longrightarrow\, 0
\end{equation}
be the canonical short exact sequence of jet bundles. The homomorphism
$$
\varphi_2\circ\varphi^{-1}\, :\, J^1(F\otimes K^{-1/2}_X)\, \longrightarrow\, J^2(F\otimes K^{-1/2}_X)\, ,
$$
where $\varphi_2$ and $\varphi$ are the homomorphisms in \eqref{vp2} and \eqref{l2} respectively, satisfies the equation
\begin{equation}\label{3j}
q_2\circ (\varphi_2\circ\varphi^{-1})\,=\, \text{Id}_{J^1(F\otimes K^{-1/2}_X)}\, ,
\end{equation}
where $q_2$ is the projection in \eqref{2j}.
In other words, $\varphi_2\circ\varphi^{-1}$ gives a holomorphic splitting of the short exact sequence in
\eqref{2j}. Therefore, there is a unique holomorphic homomorphism
\begin{equation}\label{4j}
\Delta_D\, :\, J^2(F\otimes K^{-1/2}_X) \, \longrightarrow\,F\otimes K^{3/2}_X
\end{equation}
such that
\begin{itemize}
\item $\text{kernel}(\Delta_D)\,=\, {\rm image}(\varphi_2\circ\varphi^{-1})\,=\, {\rm image}(\varphi_2)$, and

\item $\Delta_D\circ\iota_2\,=\, \text{Id}_{F\otimes K^{3/2}_X}$, where $\iota_2$ is the homomorphism in
\eqref{2j}.
\end{itemize}

We note that the homomorphism $\Delta_D$ in \eqref{4j} defines a holomorphic differential operator of
order two
\begin{equation}\label{5j}
\Delta_D\, \in\, H^0(X,\, \text{Diff}^2_X(F\otimes K^{-1/2}_X,\, F\otimes K^{3/2}_X))
\end{equation}
from $F\otimes K^{-1/2}_X$ to $F\otimes K^{3/2}_X$. The symbol of any
holomorphic differential operator of order two from $F\otimes K^{-1/2}_X$ to $F\otimes K^{3/2}_X$
is a holomorphic section of
$$
\text{Hom}(F\otimes K^{-1/2}_X,\, F\otimes K^{3/2}_X)\otimes (TX)^{\otimes 2}\,=\,
\text{End}(F)\, .
$$
{}From the above equation $\Delta_D\circ\iota_2\,=\, \text{Id}_{F\otimes K^{3/2}_X}$ it follows immediately
that the symbol of the differential operator $\Delta_D$ in \eqref{5j} is actually $\text{Id}_F\, \in\,
H^0(X,\, \text{End}(F))$.

For any $A\, \in\, H^0(X,\, \text{Diff}^2_X(F\otimes K^{-1/2}_X,\, F\otimes K^{3/2}_X))$ and
\begin{gather}
B\, \in\, H^0(X,\, \text{Diff}^0_X(F\otimes K^{-1/2}_X,\, F\otimes K^{3/2}_X))\nonumber\\
=\, H^0(X,\, \text{Hom}(F\otimes K^{-1/2}_X,\, F\otimes K^{3/2}_X))
\,=\, H^0(X,\, \text{End}(F)\otimes K^2_X)\, ,\nonumber
\end{gather}
we have
$$
A+B\, \in\, H^0(X,\, \text{Diff}^2_X(F\otimes K^{-1/2}_X,\, F\otimes K^{3/2}_X))\, .
$$
Also, the symbol of $A+B$ coincides with the symbol of $A$, because $B$ is a lower order differential operator.

Two holomorphic differential operators $A_1,\, A_2\, \in\, H^0(X,\, \text{Diff}^2_X(F\otimes K^{-1/2}_X,\,
F\otimes K^{3/2}_X))$ will be called \textit{equivalent} if
\begin{equation}\label{de}
A_1-A_2\, \in\, H^0(X,\, \text{ad}(F)\otimes K^{2}_X)\, \subset\, H^0(X,\, \text{End}(F)\otimes K^2_X)\, .
\end{equation}

\begin{definition}\label{ds}
The space of all equivalence classes of differential operators
$$A\, \in\, H^0(X,\, \text{Diff}^2_X(F\otimes K^{-1/2}_X,\, F\otimes K^{3/2}_X))$$
such that the symbol of $A$ is ${\rm Id}_F\, \in\, H^0(X,\, \text{End}(F))$ will
be denoted by $\widetilde{\mathcal D}(X,\, K^{1/2}_X,\, F)$.
\end{definition}

The above construction of $\Delta_D$ in \eqref{5j} from $D$ gives the following:

\begin{lemma}\label{lem6}
For any $(X,\, K^{1/2}_X,\, F)\,\in\, {\mathcal B}_g(r)$, there is a natural map
$$
\Psi\, :\, {\mathcal D}(F)\, \longrightarrow\, \widetilde{\mathcal D}(X,\, K^{1/2}_X,\, F)\, ,
$$
where ${\mathcal D}(F)$ and $\widetilde{\mathcal D}(X,\, K^{1/2}_X,\, F)$ are constructed
in \eqref{qe} and Definition \ref{ds} respectively.
\end{lemma}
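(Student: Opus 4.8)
The plan is to define $\Psi$ on representatives and then check that it respects the two quotient operations through which $\mathcal{D}(F)$ is built. Recall from Corollary \ref{cor1} that $\widetilde{\mathcal C}(F)\,=\,{\rm Conn}(E)$ with $E\,=\,J^1(F\otimes K^{-1/2}_X)$, and that $\mathcal D(F)$ is obtained from ${\rm Conn}(E)$ by first quotienting by the translation action of $H^0(X,\,{\rm ad}(F)\otimes K^{\otimes 2}_X)$ (via $\psi$ in \eqref{e5}) to get $\mathcal C(F)$, and then by the action of ${\rm Aut}(J^1(F\otimes K^{-1/2}_X))$ to get $\mathcal D(F)$ (see \eqref{qe}). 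The construction in \eqref{5j} already assigns to every $D\,\in\,{\rm Conn}(E)$ a second order operator $\Delta_D$ with symbol ${\rm Id}_F$, hence a point of $\widetilde{\mathcal D}(X,\,K^{1/2}_X,\,F)$ (Definition \ref{ds}). So it suffices to show that $D\,\longmapsto\,[\Delta_D]$ is constant on the orbits of these two actions; then it descends to the desired $\Psi$, and its naturality in $(X,\,K^{1/2}_X,\,F)$ is immediate since every ingredient ($\varphi$, $\varphi_2$, and the splitting of \eqref{2j}) is built canonically from $D$ and commutes with base change.

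First I would treat the ${\rm Aut}(J^1(F\otimes K^{-1/2}_X))$-action, where I expect exact invariance. By \eqref{aj} (Lemma \ref{lem4}) any $T\,\in\,{\rm Aut}(J^1(F\otimes K^{-1/2}_X))$ has the form $T\,=\,\varpi(v)+c\cdot{\rm Id}$ with $c\,\in\,{\mathbb C}^\star$, and since $\varpi(v)\,=\,\iota\circ v\circ q_0$ by \eqref{vp} while $q_0\circ\iota\,=\,0$ in \eqref{e8b}, one gets $q_0\circ T\,=\,c\cdot q_0$. The action sends $D$ to $(T\otimes{\rm Id}_{K_X})\circ D\circ T^{-1}$, whose flat sections are the images under $T$ of the flat sections of $D$; applying $q_0$ and using $q_0\circ T\,=\,c\cdot q_0$ shows that $\varphi_2$ (constructed in \eqref{vp2}) only gets precomposed with $T^{-1}$ and rescaled by $c\,\neq\,0$. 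Hence ${\rm image}(\varphi_2)$, which is exactly $\ker(\Delta_D)$ in \eqref{4j}, is unchanged; since $\Delta_D\circ\iota_2\,=\,{\rm Id}$ pins down the projection on the complementary subbundle $\iota_2(F\otimes K^{3/2}_X)$, I conclude $\Delta_{(T\otimes{\rm Id})DT^{-1}}\,=\,\Delta_D$.

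Next I would treat translation by $\alpha\,\in\,H^0(X,\,{\rm ad}(F)\otimes K^{\otimes 2}_X)$, i.e. compare $\Delta_D$ with $\Delta_{D+\psi(\alpha)}$. Both operators have symbol ${\rm Id}_F$, so their difference lies in $H^0(X,\,{\rm Diff}^1_X(F\otimes K^{-1/2}_X,\,F\otimes K^{3/2}_X))$. The key point is that $\psi(\alpha)$ maps $E$ into $S\otimes K_X$ and factors through $q_0$, so it leaves the restriction to $S$, and hence the second fundamental form, untouched while perturbing the flat sections in a purely off-diagonal direction. Carrying out the flat-section computation of $\varphi$ and $\varphi_2$ to second order at a point, I expect to find $\Delta_{D+\psi(\alpha)}\,=\,\Delta_D+\alpha$, where $\alpha$ is read as a zeroth order operator $F\otimes K^{-1/2}_X\,\longrightarrow\,F\otimes K^{3/2}_X$ under the identifications $S\otimes TX\,=\,F\otimes K^{-1/2}_X$ and $S\otimes K_X\,=\,F\otimes K^{3/2}_X$. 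In particular, when $\alpha$ is trace free the difference lies in $H^0(X,\,{\rm ad}(F)\otimes K^{\otimes 2}_X)$, which is precisely the equivalence \eqref{de} of Definition \ref{ds}, so $[\Delta_{D+\psi(\alpha)}]\,=\,[\Delta_D]$ in $\widetilde{\mathcal D}(X,\,K^{1/2}_X,\,F)$.

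Combining the two steps, $D\,\longmapsto\,[\Delta_D]$ is invariant under both actions and therefore descends to a well-defined map $\Psi\,:\,{\mathcal D}(F)\,\longrightarrow\,\widetilde{\mathcal D}(X,\,K^{1/2}_X,\,F)$. I expect the local jet computation of the second step to be the main obstacle: it requires writing $\varphi$, $\varphi_2$ and the resulting splitting of \eqref{2j} explicitly in a trivialization, and checking both that the order-one symbol of $\Delta_{D+\psi(\alpha)}-\Delta_D$ vanishes (so the difference is genuinely zeroth order) and that its zeroth order part is $\alpha$, rather than merely proportional to it. The ${\rm Aut}(J^1(F\otimes K^{-1/2}_X))$-invariance, by contrast, is essentially formal once the shape of $T$ from \eqref{aj} is used.
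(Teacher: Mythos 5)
Your proposal is correct and follows essentially the same route as the paper: the paper also defines the map on $\widetilde{\mathcal C}(F)\,=\,{\rm Conn}(J^1(F\otimes K^{-1/2}_X))$ via $D\,\longmapsto\,\Delta_D$ and then checks descent through the two quotients, declaring both verifications straightforward. Your argument supplies the details the paper omits — in particular the clean observation that $q_0\circ T\,=\,c\cdot q_0$ forces exact invariance of ${\rm image}(\varphi_2)$ under ${\rm Aut}(J^1(F\otimes K^{-1/2}_X))$, and for the translation step note that even proportionality of the zeroth-order difference to $\alpha$ (not equality) already suffices for the equivalence \eqref{de}, so the only part of your jet computation that this lemma genuinely needs is that the difference is zeroth order and trace-free.
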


\begin{proof}
Let $\widetilde{\mathcal D}'(X,\, K^{1/2}_X,\, F)$ denote the space of all holomorphic
differential operators
$A\, \in\, H^0(X,\, \text{Diff}^2_X(F\otimes K^{-1/2}_X,\, F\otimes K^{3/2}_X))$ such that the
symbol of $A$ is ${\rm Id}_F\, \in\, H^0(X,\, \text{End}(F))$.
The construction of $\Delta_D$ in \eqref{5j} from $D$ clearly gives a map
$$
\Psi'\, :\, \widetilde{\mathcal{\mathcal C}}(F)\, \longrightarrow\, \widetilde{\mathcal D}'(X,\, K^{1/2}_X,\, F)
$$
(see Definition \ref{def2}). Now consider the quotient space ${\mathcal C}(F)$ of $\widetilde{\mathcal{\mathcal C}}(F)$
in Definition \ref{def3}. Take two element $D_1,\, D_2\, \in\, \widetilde{\mathcal{\mathcal C}}(F)$ that give the
same element of ${\mathcal C}(F)$. It is straightforward to check that the differential operators $\Psi'(D_1)$ and
$\Psi'(D_2)$ are equivalent. Therefore, $\Psi'$ gives a map
$$
\Psi''\, :\, {\mathcal C}(F)\, \longrightarrow\, \widetilde{\mathcal D}(X,\, K^{1/2}_X,\, F)\, .
$$
The map $\Psi''$ clearly factors through the quotient ${\mathcal D}(F)$ of ${\mathcal C}(F)$ in \eqref{qe}.
Hence $\Psi''$ produces a map $\Psi$ as in the statement of the lemma.
\end{proof}

We will construct an inverse of the map $\Psi$ in Lemma \ref{lem6}.

For a holomorphic vector bundle $W$ on $X$, there is a canonical commutative diagram of homomorphisms
\begin{equation}\label{6j}
\begin{matrix}
&& 0 && 0 &&\\
&& \Big\downarrow && \Big\downarrow &&\\
0 &\longrightarrow & W\otimes K^2_X &\stackrel{\iota_2}{\longrightarrow} & J^2(W)
&\stackrel{q_2}{\longrightarrow} & J^1(W) &\longrightarrow & 0\\
&& \Big\downarrow && \,\,\, \Big\downarrow \mathbf{b} && \Big\Vert \\
0 &\longrightarrow & J^1(W)\otimes K_X & \stackrel{\iota'}{\longrightarrow} & J^1(J^1(W))
&\stackrel{q'}{\longrightarrow} & J^1(W) &\longrightarrow & 0\\
&&\Big\downarrow && \Big\downarrow &&\\
&& W\otimes K_X & = & W\otimes K_X &&\\
&& \Big\downarrow && \Big\downarrow && \\
&& 0 && 0 &&
\end{matrix}
\end{equation}
where the rows and columns are exact; the top (respectively, bottom) row is the one as in \eqref{2j}
(respectively, \eqref{ej}), while the left column is \eqref{ej} tensored with $K_X$. The map $\mathbf b$
is tautological; it follows from the definition of jet bundles. Now set
$$
W\,=\, F\otimes K^{-1/2}_X
$$
in \eqref{6j}, where $F$ is a stable vector bundle of rank $r$ and degree zero on $X$, and
$K^{1/2}_X$ is a theta characteristic on $X$. Let
$$
\Delta_0\, :\, J^2(F\otimes K^{-1/2}_X)\, \longrightarrow\, F\otimes K^{-1/2}_X\otimes K^2_X\,=\,
F\otimes K^{3/2}_X
$$
be a holomorphic homomorphism such that
\begin{equation}\label{7j}
\Delta_0\circ\iota_2\,=\,\text{Id}_{F\otimes K^{3/2}_X}\, ,
\end{equation}
where $\iota_2$ is the homomorphism in \eqref{6j}. In other words,
$$
\Delta_0\,\in\, H^0(X,\, \text{Diff}^2_X(F\otimes K^{-1/2}_X,\, F\otimes K^{3/2}_X))
$$
and the symbol of $\Delta_0$ is $\text{Id}_F\, \in\,
H^0(X,\, \text{End}(F))$ (it is equivalent to the equation in
\eqref{7j}). From \eqref{7j} it follows that $\Delta_0$ gives a holomorphic splitting of the
top row in \eqref{6j}. Consequently, there is a unique holomorphic homomorphism
$$
D'\, :\, J^1(F\otimes K^{-1/2}_X)\, \longrightarrow\, J^2(F\otimes K^{-1/2}_X)
$$
such that $\Delta_0\circ D'\,=\, 0$ and
\begin{equation}\label{8j}
q_2\circ D'\,=\, \text{Id}_{J^1(F\otimes K^{-1/2}_X)}\,,
\end{equation}
where $q_2$ is the projection in \eqref{6j}.

Now consider the composition of homomorphisms
\begin{equation}\label{9j}
D\, :=\, \mathbf{b}\circ D'\, :\, J^1(F\otimes K^{-1/2}_X)\, \longrightarrow\, J^1(J^1(F\otimes K^{-1/2}_X))\, ,
\end{equation}
where $\mathbf{b}$ is the homomorphism in \eqref{6j}. Since the diagram in \eqref{6j} is
commutative, from \eqref{8j} it follows that
$$
q'\circ D\,=\, q' \circ\mathbf{b}\circ D' \,=\,{\rm Id}_{J^1(F\otimes K^{-1/2}_X)}\circ q_2 \circ D'\,=\,
\text{Id}_{J^1(F\otimes K^{-1/2}_X)}\, ,
$$
where $q'$ is the projection in \eqref{6j}. In other words, $D$ in \eqref{9j} gives
a holomorphic connection on the vector bundle $J^1(F\otimes K^{-1/2}_X)$. The differential operator
$J^1(F\otimes K^{-1/2}_X)\, \longrightarrow\, J^1(F\otimes K^{-1/2}_X)\otimes K_X$
for the connection $D$ is the unique ${\mathcal O}_X$--linear homomorphism
$$
\widehat{D}\, :\, J^1(J^1(F\otimes K^{-1/2}_X))\, \longrightarrow\,J^1(F\otimes K^{-1/2}_X)\otimes K_X
$$
that satisfies the following conditions:
\begin{itemize}
\item $\text{kernel}(\widehat{D})\,=\, {\rm image}(D)$, and

\item $\widehat{D}\circ\iota'\,=\,
\text{Id}_{J^1(F\otimes K^{-1/2}_X)\otimes K_X}$, where $\iota'$ is the homomorphism in \eqref{6j}.
\end{itemize}

{}From Corollary \ref{cor1} we know that any holomorphic connection on $J^1(F\otimes K^{-1/2}_X)$
gives an element of ${\mathcal C}(F)$ (see Definition \ref{def3}). Hence the holomorphic connection $D$ in
\eqref{9j} gives an element
\begin{equation}\label{10j}
D_{\Delta_0}\, \in\, {\mathcal D}(F)\, ,
\end{equation}
where ${\mathcal D}(F)$ is the quotient of ${\mathcal C}(F)$ defined in \eqref{qe}.

\begin{lemma}\label{lem7}
For any $(X,\, K^{1/2}_X,\, F)\,\in\, {\mathcal B}_g(r)$ (see \eqref{e25}), there is a natural map
$$
\Phi\, :\, \widetilde{\mathcal D}(X,\, K^{1/2}_X,\, F)\, \longrightarrow\, {\mathcal D}(F)\, ,
$$
where $\widetilde{\mathcal D}(X,\, K^{1/2}_X,\, F)$ and ${\mathcal D}(F)$ are constructed
in Definition \ref{ds} and \eqref{qe} respectively.
\end{lemma}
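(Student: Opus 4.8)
The plan is to observe that the construction in \eqref{6j}--\eqref{10j} already yields the map at the level of the space $\widetilde{\mathcal D}'(X, K^{1/2}_X, F)$ of actual differential operators, and then to verify that it is constant on the equivalence classes of \eqref{de}, so that it descends to $\widetilde{\mathcal D}(X, K^{1/2}_X, F)$. Concretely, sending a differential operator $\Delta_0 \in \widetilde{\mathcal D}'(X, K^{1/2}_X, F)$ first to the connection $D = \mathbf b \circ D'$ of \eqref{9j}, and then to its class $D_{\Delta_0} \in {\mathcal D}(F)$ in \eqref{10j}, defines a map $\Phi' : \widetilde{\mathcal D}'(X, K^{1/2}_X, F) \longrightarrow {\mathcal D}(F)$. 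Since, by Definition \ref{ds}, the space $\widetilde{\mathcal D}(X, K^{1/2}_X, F)$ is the quotient of $\widetilde{\mathcal D}'(X, K^{1/2}_X, F)$ by the relation \eqref{de}, it suffices to prove that $\Phi'(\Delta_0 + B) = \Phi'(\Delta_0)$ in ${\mathcal D}(F)$ for every $B \in H^0(X, \text{ad}(F) \otimes K^2_X)$.

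First I would track how the splitting $D'$ depends on $\Delta_0$. Both $D'$ and the splitting $D'_B$ attached to $\Delta_0 + B$ satisfy $q_2 \circ (\cdot) = \text{Id}$ by \eqref{8j}, so $D'_B - D'$ factors through $\iota_2$; writing $D'_B - D' = \iota_2 \circ \Xi$, the defining relations $\Delta_0 \circ D' = 0$ and $(\Delta_0 + B) \circ D'_B = 0$, together with $\Delta_0 \circ \iota_2 = \text{Id}$ from \eqref{7j} and $B \circ \iota_2 = 0$ (because $B$ has order zero while $\iota_2$ is the inclusion of the top-order part), force $\Xi = -B \circ D'$. As $B$ has order zero it factors through the projection $J^2(F \otimes K^{-1/2}_X) \to F \otimes K^{-1/2}_X$, and $q_2 \circ D' = \text{Id}$, so $\Xi = -B \circ q_0$, with $B$ now read as the homomorphism $F \otimes K^{-1/2}_X \to F \otimes K^{3/2}_X$ and $q_0$ the projection in \eqref{e8b}.

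Then I would compute the induced change in the connection $D = \mathbf b \circ D'$, namely $D_B - D = \mathbf b \circ \iota_2 \circ \Xi$. The commutativity of the upper-left square of \eqref{6j} identifies $\mathbf b \circ \iota_2$ with $\iota' \circ (\iota \otimes \text{Id}_{K_X})$, where $\iota : F \otimes K^{1/2}_X \hookrightarrow E$ is the inclusion in \eqref{e8b} and $\iota' : E \otimes K_X \hookrightarrow J^1(E)$ is the inclusion in \eqref{ej} for the bundle $E = J^1(F \otimes K^{-1/2}_X)$. Hence $D_B - D$ takes values in $\iota'(E \otimes K_X)$, so, viewed as an element of $H^0(X, \text{End}(E) \otimes K_X)$, it equals the composition
$$E \, \xrightarrow{q_0} \, F \otimes K^{-1/2}_X \, \xrightarrow{-B} \, F \otimes K^{3/2}_X \, \xrightarrow{\iota \otimes \text{Id}_{K_X}} \, E \otimes K_X \, .$$
Comparing with the homomorphism $\psi$ in \eqref{e5}, this difference is exactly $\psi(-B)$, where $-B \in H^0(X, \text{ad}(F) \otimes K^2_X)$.

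Finally, since ${\mathcal C}(F)$ is obtained from $\widetilde{\mathcal C}(F)$ by quotienting by the action \eqref{ea} of $H^0(X, \text{ad}(F) \otimes K^2_X)$ through $\psi$, the connections $D$ and $D_B$ represent the same point of ${\mathcal C}(F)$, and therefore the same point of the further quotient ${\mathcal D}(F)$. This shows $\Phi'(\Delta_0 + B) = \Phi'(\Delta_0)$, so $\Phi'$ descends to the desired map $\Phi$. The step demanding the most care is the identification of $\mathbf b \circ \iota_2$ through the diagram \eqref{6j} and the ensuing matching of $D_B - D$ with the image of $\psi$; once this bookkeeping is in place, the rest is formal.
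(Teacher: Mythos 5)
Your proposal is correct and follows essentially the same route as the paper: pass from $\Delta_0$ to the connection $D=\mathbf b\circ D'$ via \eqref{6j}--\eqref{10j}, and check that changing $\Delta_0$ by $B\in H^0(X,\,{\rm ad}(F)\otimes K^{\otimes 2}_X)$ changes $D$ by an element of the same subspace, which dies in the quotient ${\mathcal C}(F)$ and hence in ${\mathcal D}(F)$. The only difference is that you carry out explicitly the diagram chase (identifying $D_B-D$ with $\psi(\mp B)$ through $\mathbf b\circ\iota_2=\iota'\circ(\iota\otimes{\rm Id}_{K_X})$) that the paper compresses into the single assertion $D_1-D_2=\Delta_1-\Delta_2$; the sign ambiguity is immaterial since the subspace is closed under negation.
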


\begin{proof}
The above construction of $D_{\Delta_0}$ (in \eqref{10j}) from $\Delta_0$ produces a map
\begin{equation}\label{pp}
\Phi'\, :\, \widetilde{\mathcal D}'(X,\, K^{1/2}_X,\, F)\, \longrightarrow\, {\mathcal D}(F),\,
\end{equation}
where $\widetilde{\mathcal D}'(X,\, K^{1/2}_X,\, F)$ is defined in the proof of Lemma \ref{lem6}.

Take $\Delta_1,\, \Delta_2\, \in\, \widetilde{\mathcal D}'(X,\, K^{1/2}_X,\, F)$ such that $\Delta_1$
is equivalent to $\Delta_2$ (see \eqref{de}). Let $D_1$ (respectively, $D_2$) be the holomorphic connection on
$J^1(F\otimes K^{-1/2}_X)$ corresponding to $\Delta_1$ (respectively, $\Delta_2$); see \eqref{9j}.
Since
$$
\Delta_1-\Delta_2\, \in\, H^0(X,\, \text{ad}(F)\otimes K^{2}_X)\, ,
$$
we have
$$
D_1-D_2\,=\, \Delta_1-\Delta_2\, \in\, H^0(X,\, \text{ad}(F)\otimes K^{2}_X)\, .
$$
Hence $D_1$ and $D_2$ give the same element of ${\mathcal C}(F)$ defined in Definition \ref{def3}.
This implies that $\Phi'$ in \eqref{pp} produces a map $\Phi$ as in the statement of the lemma.
\end{proof}

\begin{theorem}\label{thm2}
The two maps $\Psi$ and $\Phi$, constructed in Lemma \ref{lem6} and Lemma \ref{lem7} respectively,
are inverses of each other.
\end{theorem}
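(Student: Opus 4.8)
The plan is to verify that $\Phi\circ\Psi$ and $\Psi\circ\Phi$ are the identity maps by working with explicit representatives and reducing everything to a single compatibility between the jet-theoretic construction of $\varphi_2$ in \eqref{vp2} and the tautological map $\mathbf b$ in \eqref{6j}. Throughout write $W\,=\,F\otimes K^{-1/2}_X$ and $E\,=\,J^1(W)$, and for a connection $D$ on $E$ let $s_D\,:\,E\,\longrightarrow\, J^1(E)$ denote the associated splitting of the jet sequence of $E$, so that the $D$-flat sections $\widehat v$ are exactly those with $j^1\widehat v\,=\,s_D\circ\widehat v$. Recall that by Corollary \ref{cor1} every connection on $E$ lies in $\widetilde{\mathcal C}(F)$, and that the connection $D\,=\,\mathbf b\circ D'$ produced in \eqref{9j} is, by construction, precisely such a splitting $s_D\,=\,\mathbf b\circ D'$.

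The key identity I would isolate first is
$$
\mathbf b\circ\varphi_2\,=\, J^1(\varphi)\circ s_D
$$
as maps $E\,\longrightarrow\, J^1(E)$, where $\varphi,\,\varphi_2$ are built from $D$ as in \eqref{l2} and \eqref{vp2}, and $J^1(\varphi)$ is the functorial lift of the bundle automorphism $\varphi$. To prove it one unwinds the definitions: for $v\in E_x$ with $D$-flat extension $\widehat v$ and $\mu\,=\,q_0(\widehat v)$, one has $\varphi_2(v)\,=\,j^2_x(\mu)$, hence $\mathbf b(\varphi_2(v))\,=\,j^1_x(j^1\mu)$ by the defining property of $\mathbf b$; and since $j^1_y(\mu)\,=\,\varphi(\widehat v(y))$ for $y$ near $x$ (this is the definition of $\varphi$ together with the fact that $\widehat v$ is the common flat extension), the section $j^1\mu$ equals $\varphi\circ\widehat v$, so $\mathbf b(\varphi_2(v))\,=\,J^1(\varphi)(j^1_x\widehat v)\,=\,J^1(\varphi)(s_D(v))$ by $\mathcal O_X$-linearity of $\varphi$. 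I expect this unwinding of the jet bundles and of $\mathbf b$ to be the main obstacle, as it is the one genuinely computational point; everything else is formal.

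For $\Psi\circ\Phi\,=\,\mathrm{Id}$, take a representative $\Delta_0\in\widetilde{\mathcal D}'(X,K^{1/2}_X,F)$ and form $D'$ and $D\,=\,\mathbf b\circ D'$ as in \eqref{8j}--\eqref{9j}; since $\Delta_0\circ D'\,=\,0$, $q_2\circ D'\,=\,\mathrm{Id}$ and $\Delta_0\circ\iota_2\,=\,\mathrm{Id}$, one has $\mathrm{image}(D')\,=\,\ker(\Delta_0)$. A short computation, cleanest in a local trivialization where $\Delta_0$ is $\partial^2+a\partial+b$ and $D$ is the associated companion-form connection, shows that the $D$-flat sections of $E$ are precisely the $j^1(f)$ with $\Delta_0 f\,=\,0$; consequently $\varphi\,=\,\mathrm{Id}_E$ and $\varphi_2\,=\,D'$ for this $D$. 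Hence $\mathrm{image}(\varphi_2)\,=\,\ker(\Delta_0)$, and the operator $\Delta_D$ of \eqref{4j}, being the unique homomorphism with kernel $\mathrm{image}(\varphi_2)$ and $\Delta_D\circ\iota_2\,=\,\mathrm{Id}$, coincides with $\Delta_0$. Thus $\Psi(\Phi([\Delta_0]))\,=\,[\Delta_0]$.

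For $\Phi\circ\Psi\,=\,\mathrm{Id}$, take a connection $D$ representing a class in $\mathcal D(F)$ and let $\Delta_D$ be as in \eqref{4j}, so $\ker(\Delta_D)\,=\,\mathrm{image}(\varphi_2)\,=\,\mathrm{image}(\varphi_2\circ\varphi^{-1})$. Applying $\Phi'$ (see \eqref{pp}), the splitting $D'_{\Delta_D}$ with $q_2\circ D'_{\Delta_D}\,=\,\mathrm{Id}$ and image $\ker(\Delta_D)$ must equal $\varphi_2\circ\varphi^{-1}$, since a splitting of $q_2$ is determined by its image. The resulting connection $D''$ then has splitting $s_{D''}\,=\,\mathbf b\circ\varphi_2\circ\varphi^{-1}\,=\,J^1(\varphi)\circ s_D\circ\varphi^{-1}$ by the key identity. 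Recognizing the right-hand side as the splitting obtained from $s_D$ by the action of $\varphi\in\mathrm{Aut}(J^1(W))$ on connections described before Corollary \ref{cor3}, and using that $\varphi$ is an automorphism of $E$ by Lemma \ref{lem1} together with \eqref{aj}, we conclude $D''\,=\,\varphi\cdot D$. Since $\mathcal D(F)$ is the quotient of $\mathcal C(F)$ by $\mathrm{Aut}(J^1(W))$ in \eqref{qe}, this gives $[D'']\,=\,[D]$, that is $\Phi(\Psi([D]))\,=\,[D]$. Combining the two directions proves that $\Psi$ and $\Phi$ are mutually inverse.
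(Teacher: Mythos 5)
Your proof is correct and supplies exactly the ``straightforward verification'' that the paper's proof of Theorem \ref{thm2} declares and omits: the key identity $\mathbf{b}\circ\varphi_2 = J^1(\varphi)\circ s_D$ reduces both composites to bookkeeping, the computation that $D$-flat sections of $J^1(F\otimes K^{-1/2}_X)$ for $D=\mathbf{b}\circ D'$ are the prolongations $j^1f$ of solutions of $\Delta_0 f=0$ gives $\Psi\circ\Phi=\mathrm{Id}$ on the nose, and your observation that $\Phi\circ\Psi$ returns $\varphi\cdot D$ rather than $D$ itself correctly identifies the quotient by $\mathrm{Aut}(J^1(F\otimes K^{-1/2}_X))$ in \eqref{qe} as the step that makes the two maps mutually inverse on $\mathcal{D}(F)$. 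I see no gaps.
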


\begin{proof}
In view of the explicit nature of the maps $\Psi$ and $\Phi$, this is a verification by straightforward
computations. We omit the details.
\end{proof}

\subsection{Integral kernels and differential operators}

For $i\,=\, 1,\, 2$, let $$q_i\, :\, X\times X\, \longrightarrow\, X$$ be the projection to the $i$-th factor.
For holomorphic vector bundles $A,\, B$ on $X$, the holomorphic vector bundle $(q^*_1 A)\otimes (q^*_2 B)$ on
$X\times X$ will be denoted by $A\boxtimes B$. Let
$$
{\mathbf\Delta}\, :=\, \{(x,\, x)\,\,\mid\,\, x\in\, X\} \, \subset\, X\times X
$$
be the reduced diagonal divisor.

For holomorphic vector bundles $A,\, B$ on $X$, and a nonnegative integer $d$, we will construct a torsion sheaf
on $X\times X$ supported on the divisor $(d+1){\mathbf \Delta}\, \subset\, X\times X$. Let $r_A$ and
$r_B$ be the ranks of $A$ and $B$ respectively.

Consider the holomorphic vector bundles $$A\boxtimes (B^*\otimes K_X)\ \ \, \text{ and }\ \ \,
A\boxtimes (B^*\otimes K_X)\otimes {\mathcal O}_{X\times X}((d+1){\mathbf \Delta})$$ on $X\times X$. We note that
$A\boxtimes (B^*\otimes K_X)$ is a subsheaf of $A\boxtimes (B^*\otimes K_X)\otimes
{\mathcal O}_{X\times X}((d+1){\mathbf \Delta})$ because $(d+1){\mathbf \Delta}$ is an effective divisor on
$X\times X$. So we have a short exact sequence of coherent sheaves on $X\times X$
\begin{gather}
0\, \longrightarrow\, A\boxtimes (B^*\otimes K_X)\, \longrightarrow\,A\boxtimes (B^*\otimes K_X)\otimes
{\mathcal O}_{X\times X}((d+1){\mathbf \Delta})\nonumber\\
\longrightarrow\, {\mathcal Q}_d(A,\,B)\, :=\,
\frac{A\boxtimes (B^*\otimes K_X)\otimes
{\mathcal O}_{X\times X}((d+1){\mathbf \Delta})}{A\boxtimes (B^*\otimes K_X)} \, \longrightarrow\, 0\, ;
\label{k1}
\end{gather}
the support of the above quotient sheaf ${\mathcal Q}_d(A,\,B)$ in \eqref{k1}
is $(d+1){\mathbf \Delta}$. The direct image
\begin{equation}\label{k2}
{\mathcal K}_d(A,\,B)\, :=\, q_{1*} {\mathcal Q}_d(A,\,B)
\end{equation}
is a holomorphic vector bundle on $X$ of rank $r_Ar_B(d+1)$. It is known that
\begin{equation}\label{k3}
{\mathcal K}_d(A,\,B)\,=\, \text{Hom}(J^d(B),\, A)\,=\, \text{Diff}^d_X(B,\, A)\, ,
\end{equation}
where ${\mathcal K}_d(A,\,B)$ is constructed in \eqref{k2} (see \cite[Section 2.1]{BS}, \cite[p.~25, (5.1)]{Bi},
\cite[Seciton 3.1, p.~1314]{BB}).

For $d\, \geq\, 1$, the sheaf ${\mathcal Q}_d(A,\,B)$ in \eqref{k1} fits in the following short
exact sequence of sheaves on $X\times X$
\begin{gather}
0\, \longrightarrow\, {\mathcal Q}_{d-1}(A,\,B)\,:=\, \frac{A\boxtimes (B^*\otimes K_X)\otimes {\mathcal O}_{X\times X}
(d {\mathbf \Delta})}{A\boxtimes (B^*\otimes K_X)}\, \longrightarrow\, {\mathcal Q}_{d}(A,\,B)\label{ez1}\\
:=\, \frac{A\boxtimes (B^*\otimes K_X)\otimes
{\mathcal O}_{X\times X}((d+1){\mathbf \Delta})}{A\boxtimes (B^*\otimes K_X)} \, \longrightarrow\,
\frac{A\boxtimes (B^*\otimes K_X)\otimes
{\mathcal O}_{X\times X}((d+1){\mathbf \Delta})}{A\boxtimes (B^*\otimes K_X)\otimes {\mathcal O}_{X\times X}
(d {\mathbf \Delta})} \, \longrightarrow\, 0\nonumber .
\end{gather}
The above sheaf $\frac{A\boxtimes (B^*\otimes K_X)\otimes
{\mathcal O}_{X\times X}((d+1){\mathbf \Delta})}{A\boxtimes (B^*\otimes K_X)\otimes {\mathcal O}_{X\times X}
(d {\mathbf \Delta})}$ is supported on the reduced divisor ${\mathbf \Delta}$.
Taking direct image of the short exact sequence in \eqref{ez1} by the projection $q_1$ we get the following short exact sequence
holomorphic vector bundles on $X$
\begin{gather}
0\, \longrightarrow\, q_{1*} {\mathcal Q}_{d-1}(A,\,B)\,=\, {\mathcal K}_{d-1}(A,\,B) \, \longrightarrow\,
q_{1*} {\mathcal Q}_d(A,\,B)\,=\, {\mathcal K}_d(A,\,B)
\nonumber\\
\stackrel{p_0}{\longrightarrow}\, q_{1*} \left(\frac{A\boxtimes (B^*\otimes K_X)\otimes
{\mathcal O}_{X\times X}((d+1){\mathbf \Delta})}{A\boxtimes (B^*\otimes K_X)\otimes {\mathcal O}_{X\times X}
(d {\mathbf \Delta})}\right)\, \longrightarrow\, 0\, .\label{k4}
\end{gather}
Poincar\'e adjunction formula says that ${\mathcal O}_{X\times X}({\mathbf \Delta})\big\vert_{\mathbf \Delta}$
is the normal bundle of ${\mathbf \Delta}$ \cite[p.~146]{GH}.
So ${\mathcal O}_{X\times X}({\mathbf \Delta})\big\vert_{\mathbf \Delta}\,=\, TX$, using the identification of
${\mathbf \Delta}$ with $X$ defined by $x\, \longmapsto\, (x,\, x)$. Therefore, we have
$$
q_{1*} \left(\frac{A\boxtimes (B^*\otimes K_X)\otimes
{\mathcal O}_{X\times X}((d+1){\mathbf \Delta})}{A\boxtimes (B^*\otimes K_X)\otimes {\mathcal O}_{X\times X}
(d {\mathbf \Delta})}\right)\,=\, \text{Hom}(B,\, A)\otimes (TX)^{\otimes d}\, .
$$
Hence the isomorphism in \eqref{k3} and the projection $p_0$ in \eqref{k4} together produce a homomorphism
$$
\text{Diff}^d_X(B,\, A)\, \longrightarrow\, \text{Hom}(B,\, A)\otimes (TX)^{\otimes d}\, .
$$
The homomorphism of global sections corresponding to it
\begin{equation}\label{k5}
H^0(X,\, \text{Diff}^d_X(B,\, A))\, \longrightarrow\, H^0(X,\, \text{Hom}(B,\, A)\otimes (TX)^{\otimes d})
\end{equation}
is the symbol map on the global differential operators.

Let $F$ be a stable vector bundle on $X$ of rank $r$ and degree zero.
Using the isomorphism in \eqref{k3}, the space of holomorphic differential operators
$H^0(X,\, \text{Diff}^2_X(F\otimes K^{-1/2}_X,\, F\otimes K^{3/2}_X))$ has the following isomorphism:
\begin{equation}\label{di}
H^0(X,\, \text{Diff}^2_X(F\otimes K^{-1/2}_X,\, F\otimes K^{3/2}_X))
\end{equation}
$$
\stackrel{\sim}{\longrightarrow}\,
H^0\big(3\mathbf{\Delta},\, \big((F\otimes K^{3/2}_X)\boxtimes (F^*\otimes K^{3/2}_X)
\otimes {\mathcal O}_{X\times X}(3{\mathbf \Delta})\big)\big\vert_{3\mathbf{\Delta}}\big)\, .
$$
Now, for any
$$
s\, \in\, H^0\big(3\mathbf{\Delta},\, \big((F\otimes K^{3/2}_X)\boxtimes (F^*\otimes K^{3/2}_X)
\otimes{\mathcal O}_{X\times X}(3{\mathbf \Delta})\big)\big\vert_{3\mathbf{\Delta}}\big)\, ,
$$
let
\begin{equation}\label{s0}
s_0\, :=\, s\big\vert_{\mathbf{\Delta}}\, \in\,H^0\big(\mathbf{\Delta},\, (F\boxtimes F)\big\vert_{\mathbf{\Delta}}\big)
\,=\, H^0(X,\, \text{End}(F))
\end{equation}
be the restriction of it to $\mathbf{\Delta}\, \subset\, 3\mathbf{\Delta}$; note that
$((K^{3/2}_X\boxtimes K^{3/2}_X)\otimes {\mathcal O}_{X\times X}(3{\mathbf \Delta}))\big\vert_{\mathbf{\Delta}}$
is canonically trivialized, because ${\mathcal O}_{X\times X}({\mathbf \Delta})\big\vert_{\mathbf{\Delta}}
\,=\, TX$ (after identifying ${\mathbf \Delta}$ with $X$). Then $s_0$ coincides with
the symbol of the differential operator
$$
D_s\, \in\, H^0(X,\, \text{Diff}^2_X(F\otimes K^{-1/2}_X,\, F\otimes K^{3/2}_X))
$$
corresponding to the above section $s$ (see \eqref{k5} and \eqref{di}).

Consider the natural short exact sequence
$$
0\,\longrightarrow\, K^{\otimes 2}_{\mathbf{\Delta}}\, =\, K^{\otimes 2}_X\,\longrightarrow\,
{\mathcal O}_{3\mathbf{\Delta}}\,\longrightarrow\,{\mathcal O}_{2\mathbf{\Delta}}\,\longrightarrow\,0\, .
$$
Tensoring it with $(F\otimes K^{3/2}_X)\boxtimes (F^*\otimes K^{3/2}_X)\otimes
{\mathcal O}_{X\times X}(3{\mathbf \Delta})$, and then taking global sections, we see that the vector space
$$
H^0\big(\mathbf{\Delta},\, \big((F\otimes K^{3/2}_X)\boxtimes (F^*\otimes K^{3/2}_X)
\otimes{\mathcal O}_{X\times X}(3{\mathbf \Delta})\big)\big\vert_{\mathbf{\Delta}}\otimes
K^{\otimes 2}_{\mathbf{\Delta}}\big)\,=\,
H^0(X,\, \text{End}(F)\otimes K^{\otimes 2}_X)
$$
is a subspace of
$H^0\big(3\mathbf{\Delta},\, \big((F\otimes K^{3/2}_X)\boxtimes (F^*\otimes K^{3/2}_X)
\otimes{\mathcal O}_{X\times X}(3{\mathbf \Delta})\big)\big\vert_{3\mathbf{\Delta}}\big)$. 
Take two sections
$$
s,\, t\, \in\, H^0\big(3\mathbf{\Delta},\, \big((F\otimes K^{3/2}_X)\boxtimes (F^*\otimes K^{3/2}_X)
\otimes{\mathcal O}_{X\times X}(3{\mathbf \Delta})\big)\big\vert_{3\mathbf{\Delta}}\big)\, .
$$
Then the corresponding differential operators (see \eqref{di})
$$
D_s,\, D_t\, \in\, H^0(X,\, \text{Diff}^2_X(F\otimes K^{-1/2}_X,\, F\otimes K^{3/2}_X))
$$
are equivalent (see \eqref{de}) if and only if
$$
s-t\, \in\, H^0(X,\, \text{ad}(F)\otimes K^{\otimes 2}_X)\, \subset\, H^0(X,\, \text{End}(F)\otimes K^{\otimes 2}_X)\, .
$$
Consequently, we obtain the following description of $\widetilde{\mathcal D}(X,\, K^{1/2}_X,\, F)$ (see Definition \ref{ds})
in terms of integral kernels: $\widetilde{\mathcal D}(X,\, K^{1/2}_X,\, F)$ is identified with the
quotient of
$$
\{s\, \in\, H^0\big(3\mathbf{\Delta},\, \big((F\otimes K^{3/2}_X)\boxtimes (F^*\otimes K^{3/2}_X)
\otimes{\mathcal O}_{X\times X}(3{\mathbf \Delta})\big)\big\vert_{3\mathbf{\Delta}}\big)\,
\big\vert\, s\big\vert_{\mathbf{\Delta}}\,=\, H^0(X,\, \text{End}(F))\}
$$
by the subspace $H^0(X,\, \text{ad}(F)\otimes K^{\otimes 2}_X)$ of it.
Using this description of $\widetilde{\mathcal D}(X,\, K^{1/2}_X,\, F)$,
Theorem \ref{thm2} gives the following description of ${\mathcal H}_g(r)$ in terms
of the integral kernels.

\begin{corollary}\label{cor2}
The moduli space ${\mathcal H}_g(r)$ in \eqref{e26} is identified with the space of quadruples of the form
$(X,\, K^{1/2}_X,\, F,\, s)$, where $(X,\, K^{1/2}_X,\, F)\, \in\, {\mathcal B}_g(r)$ (see \eqref{e25}) and
$$
s\,\, \in\,\, \frac{H^0\left(3\mathbf{\Delta},\, \left((F\otimes K^{3/2}_X)\boxtimes (F^*\otimes K^{3/2}_X)
\otimes{\mathcal O}_{X\times X}(3{\mathbf \Delta})\right)\big\vert_{3\mathbf{\Delta}}\right)}{H^0
(X,\, {\rm ad}(F)\otimes K^{\otimes 2}_X)}
$$
such that $s_0\, :=\, s\big\vert_{\mathbf{\Delta}}\, =\,{\rm Id}_F$ (see \eqref{s0}).
\end{corollary}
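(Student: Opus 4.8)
The plan is to assemble the stated identification by chaining together the canonical bijections already established in the paper, working fiber by fiber over ${\mathcal B}_g(r)$ and then checking that the correspondence is natural enough to glue into an identification of the total moduli spaces. First I would recall that, by the very definition of ${\mathcal H}_g(r)$ in \eqref{e26}, the fiber of $\gamma$ over a point $(X,\, K^{1/2}_X,\, F)\, \in\, {\mathcal B}_g(r)$ is precisely the space ${\mathcal D}(F)$ of \eqref{qe}. Next I would invoke Theorem \ref{thm2}, which states that the maps $\Psi$ and $\Phi$ of Lemma \ref{lem6} and Lemma \ref{lem7} are mutually inverse; this yields a canonical bijection between ${\mathcal D}(F)$ and the space $\widetilde{\mathcal D}(X,\, K^{1/2}_X,\, F)$ of equivalence classes of order-two differential operators with symbol ${\rm Id}_F$ (Definition \ref{ds}).

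The third step is to substitute the integral-kernel description of $\widetilde{\mathcal D}(X,\, K^{1/2}_X,\, F)$ developed in the paragraphs immediately preceding the corollary. Concretely, the isomorphism \eqref{di} identifies a global operator $A\, \in\, H^0(X,\, \text{Diff}^2_X(F\otimes K^{-1/2}_X,\, F\otimes K^{3/2}_X))$ with a section $s$ of $((F\otimes K^{3/2}_X)\boxtimes (F^*\otimes K^{3/2}_X)\otimes {\mathcal O}_{X\times X}(3{\mathbf \Delta}))\big\vert_{3\mathbf{\Delta}}$; under this identification the symbol of $A$ becomes $s_0\,=\, s\big\vert_{\mathbf{\Delta}}$ as in \eqref{s0}, so the constraint that the symbol equal ${\rm Id}_F$ is exactly the condition $s_0\,=\, {\rm Id}_F$. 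Likewise, the equivalence relation \eqref{de} defining $\widetilde{\mathcal D}(X,\, K^{1/2}_X,\, F)$ translates, under \eqref{di}, into quotienting by the subspace $H^0(X,\, {\rm ad}(F)\otimes K^{\otimes 2}_X)$, since $s-t$ lies in this subspace precisely when $D_s$ and $D_t$ are equivalent; note that this subspace sits among the order-zero operators, hence has vanishing symbol, so the condition $s_0\,=\, {\rm Id}_F$ descends well to the quotient. Assembling these identifications produces exactly the quotient space appearing in the statement.

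The one point demanding genuine care — and the step I expect to be the main obstacle — is the compatibility of the canonical trivialization of $(K^{3/2}_X\boxtimes K^{3/2}_X)\otimes {\mathcal O}_{X\times X}(3{\mathbf \Delta})\big\vert_{\mathbf{\Delta}}$ with the symbol normalization, so that the symbol ${\rm Id}_F$ corresponds on the nose to $s_0\,=\, {\rm Id}_F$ under \eqref{s0}; this rests on the Poincar\'e adjunction isomorphism ${\mathcal O}_{X\times X}({\mathbf \Delta})\big\vert_{\mathbf{\Delta}}\,=\, TX$, and one must track the powers of $K_X$ carefully to see that the three copies of $TX$ produced by the triple diagonal cancel the $K^{3/2}_X\boxtimes K^{3/2}_X$ factor. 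Finally, I would observe that each ingredient — the bijection of Theorem \ref{thm2} and the integral-kernel isomorphism \eqref{di} — is built canonically from the data $(X,\, K^{1/2}_X,\, F)$ and hence varies holomorphically in families; this naturality is what upgrades the fiberwise correspondence to the asserted identification of the moduli space ${\mathcal H}_g(r)$ with the space of quadruples $(X,\, K^{1/2}_X,\, F,\, s)$.
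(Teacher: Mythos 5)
Your proposal is correct and follows essentially the same route as the paper: the paper obtains Corollary \ref{cor2} precisely by combining Theorem \ref{thm2} with the integral-kernel identification \eqref{di}, under which the symbol becomes the restriction $s\big\vert_{\mathbf{\Delta}}$ and the equivalence relation \eqref{de} becomes the quotient by $H^0(X,\, {\rm ad}(F)\otimes K^{\otimes 2}_X)$. Your attention to the cancellation $K^{3/2}_X\boxtimes K^{3/2}_X\otimes (TX)^{\otimes 3}\cong {\mathcal O}$ via Poincar\'e adjunction, and to the well-definedness of $s_0$ on the quotient, matches the checks the paper carries out in the paragraphs preceding the corollary.
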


\section{A canonical holomorphic section}

Recall the projection $\gamma$ in \eqref{e26} and the effective divisor $\Theta$ in \eqref{e27}.
In this section we we will construct a holomorphic map on the complement of the theta divisor
$$
{\mathbb S}\, :\, {\mathcal B}_g(r)\setminus \Theta\, \longrightarrow\, {\mathcal H}_g(r)
$$
such that $\gamma\circ{\mathbb S}\,=\, {\rm Id}_{{\mathcal B}_g(r)\setminus \Theta}$.

Take any $(X,\, K^{1/2}_X,\, F)\,\in\, {\mathcal B}_g(r)\setminus \Theta$. Since $$H^0(X,\, F\otimes K^{1/2}_X)\,
=\, 0\,=\, H^1(X,\, F\otimes K^{1/2}_X),$$
it follows that
\begin{equation}\label{e28a}
H^0(X\times X,\, (F\otimes K^{1/2}_X)\boxtimes (F^*\otimes K^{1/2}_X)\otimes{\mathcal O}_{X\times X}
({\mathbf\Delta}))\,=\, H^0(X,\, \text{End}(F\otimes K^{1/2}_X))\,=\, \text{End}(F)
\end{equation}
(see \cite[Remark 2.4]{BH1}, \cite[p.~8]{BH2}). As in \cite[(3.6)]{BH2}, let
\begin{equation}\label{e29}
\beta_F\, \in\, H^0(X\times X,\, (F\otimes K^{1/2}_X)\boxtimes (F^*\otimes K^{1/2}_X)\otimes{\mathcal O}_{X\times X}
({\mathbf\Delta}))
\end{equation}
be the section that corresponds to $\text{Id}_F\,\in\, H^0(X,\, \text{End}(F))$ by the isomorphism in
\eqref{e28a}.
The line bundle over $2{\mathbf\Delta}$
$$
K^{1/2}_X\boxtimes K^{1/2}_X\otimes{\mathcal O}_{X\times X}({\mathbf\Delta})\big\vert_{2{\mathbf\Delta}}
\,\longrightarrow\, 2{\mathbf\Delta}
$$
has a natural trivialization given by a section 
\begin{equation}\label{e29a}
\sigma_0\, \in\, H^0(2{\mathbf\Delta},\, K^{1/2}_X\boxtimes K^{1/2}_X\otimes{\mathcal O}_{X\times X}
({\mathbf\Delta})\big\vert_{2{\mathbf\Delta}})
\end{equation}
\cite[(3.8)]{BH2}, \cite[p.~688, Theorem 2.2]{BR}. Let
\begin{equation}\label{e30}
\widehat{\beta}_F\, \in\, H^0(2{\mathbf\Delta},\, (F\boxtimes F^*)\big\vert_{2{\mathbf\Delta}})
\end{equation}
be the section defined by the equation $\beta_F\big\vert_{2{\mathbf\Delta}}\,=\, \widehat{\beta}_F\otimes\sigma_0$,
where $\beta_F$ is the section in \eqref{e29} \cite[(3.9)]{BH2}. The
restriction of $\widehat{\beta}_F$ to ${\mathbf\Delta}\, \subset\, 2{\mathbf\Delta}$ is $\text{Id}_F$ \cite{BH2},
and hence $\widehat{\beta}_F$ defines a holomorphic connection on $F$ \cite[(3.10)]{BH2}; the
holomorphic connection on $F$ given by $\widehat{\beta}_F$ will be denoted by $\widehat{\beta}'_F$.

Any holomorphic connection on $X$ is integrable, because $\Omega^{2,0}_X\,=\, 0$. Therefore,
using the above holomorphic connection $\widehat{\beta}'_F$, for any
simply connected open subset $U\, \subset\, X$, the restriction $F\big\vert_U$ is canonically identified with
the trivialized holomorphic vector bundle $F_{x_0}\times U\, \longrightarrow\, U$, for any point $x_0\,\in\, U$.
More precisely, this identification of vector bundles is constructed by taking parallel translations of $F_{x_0}$,
for the integrable connection $\widehat{\beta}'_F$, along
paths originating from $x_0$. Consequently, the two holomorphic
vector bundles $q^*_1 F$ and $q^*_2 F$ are holomorphically identified over
an analytic neighborhood of ${\mathbf\Delta}\, \subset\, X\times X$. The restriction of this isomorphism to $2\mathbf{\Delta}$
coincides with the isomorphism
\begin{equation}\label{t1}
(q^*_1 F)\big\vert_{2{\mathbf\Delta}}\, \stackrel{\sim}{\longrightarrow}\, (q^*_2 F)\big\vert_{2{\mathbf\Delta}}
\end{equation}
given by $\widehat{\beta}_F$ in \eqref{e30}. Therefore, we have an extension of the isomorphism in \eqref{t1}
to an isomorphism
$$
(q^*_1 F)\big\vert_{m{\mathbf\Delta}}\, \stackrel{\sim}{\longrightarrow}\, (q^*_2 F)\big\vert_{m{\mathbf\Delta}}
$$
for every $m\, \geq\, 2$, which is given by $\widehat{\beta}'_F$. In particular, we get an isomorphism
\begin{equation}\label{e31}
f_3\,\, :\,\, (q^*_1 F)\big\vert_{3{\mathbf\Delta}}\, \stackrel{\sim}{\longrightarrow}\, (q^*_2 F)\big\vert_{3{\mathbf\Delta}}
\end{equation}
extending the isomorphism in \eqref{t1}.

Using the isomorphism $f_3$ in \eqref{e31}, the section $\beta_F\big\vert_{3{\mathbf\Delta}}$ in \eqref{e29} becomes
a section
\begin{gather}
\beta_{3,F}\,:=\,\beta_F\big\vert_{3{\mathbf\Delta}}
\, \in\, H^0\left((3{\mathbf\Delta},\,\left(\left((F\otimes F^*\otimes K^{1/2}_X)\boxtimes
K^{1/2}_X\right)\otimes{\mathcal O}_{X\times X}({\mathbf\Delta})\right)\big\vert_{3{\mathbf\Delta}}\right)\nonumber\\
=\, H^0\left(3{\mathbf\Delta},\,\left((K^{1/2}_X\boxtimes K^{1/2}_X)\otimes q^*_1\text{End}(F)
\otimes{\mathcal O}_{X\times X}({\mathbf\Delta})\right)\big\vert_{3{\mathbf\Delta}}\right)\, .\nonumber
\end{gather}
Now using the trace homomorphism
$$
q^*_1\text{End}(F)\, \longrightarrow\, q^*_1{\mathcal O}_X\,=\, {\mathcal O}_{X\times X}
\, , \ \ w\, \longmapsto\, \frac{1}{r}{\rm trace}(w)
$$
the above section $\beta_{3,F}$ produces a section
\begin{equation}\label{e32}
\sigma_1\, \in\, H^0(3{\mathbf\Delta},\, (K^{1/2}_X\boxtimes K^{1/2}_X\otimes{\mathcal O}_{X\times X}({\mathbf\Delta}))
\big\vert_{3{\mathbf\Delta}})\, ;
\end{equation}
the restriction of $\sigma_1$ to $2{\mathbf\Delta}\, \subset\, 3{\mathbf\Delta}$ coincides with the
section $\sigma_0$ in \eqref{e29a}.

Now define
\begin{gather}
\gamma_F\, :=\, (\beta_F)\big\vert_{3{\mathbf\Delta}}\otimes (\sigma_1)^{\otimes 2}
\, \in\, 
H^0\left(3{\mathbf\Delta},\, \left(\left((F\otimes K^{3/2}_X)\boxtimes (F^*\otimes K^{3/2}_X)\right)\otimes
{\mathcal O}_{X\times X}(3{\mathbf\Delta})\right)\big\vert_{3{\mathbf\Delta}}\right)\nonumber\\
\,=\, H^0\left(X,\, q_{1*}\left(\left(\left((F\otimes K^{3/2}_X)\boxtimes (F^*\otimes K^{3/2}_X)\right)\otimes
{\mathcal O}_{X\times X}(3{\mathbf\Delta})\right)\big\vert_{3{\mathbf\Delta}}\right)\right),\label{e33}
\end{gather}
where $\beta_F$ and $\sigma_1$ are the sections constructed in \eqref{e29} and \eqref{e32} respectively;
here the restriction of the projection $q_1$ to $3{\mathbf\Delta}\, \subset\, X\times X$ is also denoted by
$q_1$.

We have
$$
\left(\left((F\otimes K^{3/2}_X)\boxtimes (F^*\otimes K^{3/2}_X)\right)\otimes
{\mathcal O}_{X\times X}(3{\mathbf\Delta})\right)\big\vert_{3{\mathbf\Delta}}\,=\,
{\mathcal Q}_2(F\otimes K^{3/2}_X,\, F\otimes K^{-1/2}_X)
$$
(see \eqref{k1}), which implies that
$$
q_{1*}\left(\left(((F\otimes K^{3/2}_X)\boxtimes (F^*\otimes K^{3/2}_X))\otimes
{\mathcal O}_{X\times X}(3{\mathbf\Delta})\right)\big\vert_{3{\mathbf\Delta}}\right)\,=\,
{\mathcal K}_2(F\otimes K^{3/2}_X,\, F\otimes K^{-1/2}_X)
$$
(see \eqref{k2}). Therefore, from \eqref{k3} it follows that 
\begin{gather}
H^0\left(X,\, q_{1*}\left(\left(((F\otimes K^{3/2}_X)\boxtimes (F^*\otimes K^{3/2}_X))\otimes
{\mathcal O}_{X\times X}(3{\mathbf\Delta})\right)\big\vert_{3{\mathbf\Delta}}\right)\right)\nonumber\\
=\, H^0(X,\, {\mathcal K}_2(F\otimes K^{3/2}_X,\, F\otimes K^{-1/2}_X))\,=\,
H^0(X,\, \text{Diff}^2_X(F\otimes K^{-1/2}_X,\, F\otimes K^{3/2}_X))\, .\nonumber
\end{gather}
Using this isomorphism, the section $\gamma_F$ constructed in \eqref{e33} gives a holomorphic differential
operator
\begin{equation}\label{e34}
\widehat{\gamma}_F\,\in\, H^0(X,\, \text{Diff}^2_X(F\otimes K^{-1/2}_X,\, F\otimes K^{3/2}_X))\, .
\end{equation}
{}From the symbol homomorphism constructed in \eqref{k5} it can be shown that the symbol of
the differential operator $\widehat{\gamma}_F$ in \eqref{e34} is
$$
{\rm Id}_F\, \in\, H^0(X,\, {\rm Hom}(F\otimes K^{-1/2}_X,\, F\otimes K^{3/2}_X)\otimes
(TX)^{\otimes 2})\,=\, H^0(X,\, \text{End}(F))\, .
$$
Indeed, in view of the construction of $\gamma_F$ in \eqref{e33}, this is a consequence the following two facts:
\begin{itemize}
\item The restriction of the section $\widehat{\beta}_F$ in \eqref{e30} to
${\mathbf\Delta}\,\subset\, 2{\mathbf\Delta}$ is $\text{Id}_F$, and

\item the restriction of the section $\sigma_1$ in \eqref{e32} to
${\mathbf\Delta}\,\subset\, 3{\mathbf\Delta}$ is the constant function $1$ on
$\bf\Delta$ (recall that the restriction of $\sigma_1$ to $2{\mathbf\Delta}\,\subset\, 3{\mathbf\Delta}$
coincides with $\sigma_0$ in \eqref{e29a}).
\end{itemize}

Recall the space of differential operators $\widetilde{\mathcal D}'(X,\, K^{1/2}_X,\, F)$
defined in the proof of
Lemma \ref{lem6}. Since the symbol of the differential operator $\widehat{\gamma}_F$
in \eqref{e34} is $\text{Id}_F$, it is an element of $\widetilde{\mathcal D}'
(X,\, K^{1/2}_X,\, F)$. Therefore, $\widehat{\gamma}_F$ gives an element of the
quotient space $\widetilde{\mathcal D}(X,\, K^{1/2}_X,\, F)$ of
$\widetilde{\mathcal D}'(X,\, K^{1/2}_X,\, F)$ (see Definition \ref{ds}). Let
\begin{equation}\label{e35}
\widehat{\gamma}'_F\,\in\, \widetilde{\mathcal D}(X,\, K^{1/2}_X,\, F)
\end{equation}
be the element given by $\widehat{\gamma}_F$.

The above construction is summarized in the following lemma.

\begin{lemma}\label{lem8}
There is a natural holomorphic map
$$
{\mathbb S}\, :\, {\mathcal B}_g(r)\setminus \Theta\, \longrightarrow\, {\mathcal H}_g(r)
$$
such that $\gamma\circ{\mathbb S}\,=\, {\rm Id}_{{\mathcal B}_g(r)\setminus \Theta}$, where
$\gamma$ is the projection in \eqref{e26}.
\end{lemma}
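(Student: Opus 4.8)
The plan is to package the canonical construction carried out immediately above into the desired section, and then to verify the two required properties: that it is a section of $\gamma$, and that it is holomorphic. First I would record that, once the point $(X,\, K^{1/2}_X,\, F)\,\in\, {\mathcal B}_g(r)\setminus\Theta$ is fixed, the entire chain of constructions involves no arbitrary choices: the section $\beta_F$ in \eqref{e29} corresponding to $\mathrm{Id}_F$ under \eqref{e28a}, the trivialization $\sigma_0$ in \eqref{e29a}, the section $\widehat{\beta}_F$ in \eqref{e30} together with the induced holomorphic connection $\widehat{\beta}'_F$ on $F$, the diagonal identification $f_3$ in \eqref{e31}, the trace section $\sigma_1$ in \eqref{e32}, and finally the differential operator $\widehat{\gamma}_F$ in \eqref{e34} are all determined canonically by the triple. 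Consequently the element $\widehat{\gamma}'_F\,\in\, \widetilde{\mathcal D}(X,\, K^{1/2}_X,\, F)$ in \eqref{e35} is canonically associated to $(X,\, K^{1/2}_X,\, F)$.

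Next I would compose with the canonical bijection of Theorem \ref{thm2}. Applying the map $\Phi$ of that theorem to $\widehat{\gamma}'_F$ produces an element $D\,:=\,\Phi(\widehat{\gamma}'_F)\,\in\, {\mathcal D}(F)$, so that the quadruple $(X,\, K^{1/2}_X,\, F,\, D)$ is a point of ${\mathcal H}_g(r)$. I would then define ${\mathbb S}$ to send $(X,\, K^{1/2}_X,\, F)$ to this quadruple. Since the fourth coordinate $D$ lies by construction over the given triple, the composition $\gamma\circ{\mathbb S}$, with $\gamma$ as in \eqref{e26}, is tautologically the identity map of ${\mathcal B}_g(r)\setminus\Theta$; this requires no computation.

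The substantive point is holomorphicity, and here I would argue that each ingredient has a relative counterpart over any holomorphic family of triples avoiding $\Theta$. The vanishing $H^0(X,\, F\otimes K^{1/2}_X)\,=\, 0\,=\, H^1(X,\, F\otimes K^{1/2}_X)$ is an open condition, so over ${\mathcal B}_g(r)\setminus\Theta$ the relevant relative direct images are locally free and cohomology-and-base-change applies; this makes the family version of the identification \eqref{e28a}, and hence of $\beta_F$, holomorphic. The trivialization $\sigma_0$ and the trace section $\sigma_1$ are built by canonical algebraic operations (restriction to infinitesimal neighborhoods of the diagonal and the trace homomorphism), and therefore vary holomorphically as well; the bijection $\Phi$ of Theorem \ref{thm2} is likewise canonical and hence compatible with families.

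The main obstacle I anticipate is the diagonal identification $f_3$ in \eqref{e31}, which was described using parallel transport for the integrable connection $\widehat{\beta}'_F$ and therefore looks transcendental. The key observation that removes this difficulty is that $f_3$ is only ever used on the third order infinitesimal neighborhood $3{\mathbf\Delta}$ of the diagonal: on such a finite jet, the parallel transport isomorphism is determined algebraically by the finite Taylor expansion of the connection, i.e. by $\widehat{\beta}_F$ to finite order, rather than by genuine path integration. Hence $f_3$, and with it the differential operator $\widehat{\gamma}_F$, is given by a polynomial expression in the holomorphically varying data $\widehat{\beta}_F$ and $\sigma_1$, so it varies holomorphically over ${\mathcal B}_g(r)\setminus\Theta$. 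Combining these observations shows that ${\mathbb S}$ is holomorphic, which completes the construction.
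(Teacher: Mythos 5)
Your proposal is correct and follows essentially the same route as the paper: define ${\mathbb S}$ by sending $(X,\, K^{1/2}_X,\, F)$ to the quadruple determined by the canonical element $\widehat{\gamma}'_F$ of \eqref{e35}, viewed in ${\mathcal D}(F)$ via Theorem \ref{thm2}, so that $\gamma\circ{\mathbb S}={\rm Id}$ is tautological. The paper's proof simply asserts these points, while you additionally spell out why the construction is holomorphic in families (canonicity, base change off $\Theta$, and the algebraic nature of parallel transport on finite infinitesimal neighborhoods of the diagonal), which is a welcome elaboration rather than a different argument.
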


\begin{proof}
Take any $(X,\, K^{1/2}_X,\, F)\,\in\, {\mathcal B}_g(r)\setminus \Theta$. From 
Theorem \ref{thm2} we know that $$(X,\, K^{1/2}_X,\, F,\, \widehat{\gamma}'_F)\, \in\, {\mathcal H}_g(r),$$
where $\widehat{\gamma}'_F$ is constructed from $(X,\, K^{1/2}_X,\, F)$ in \eqref{e35}.
Let
$$
{\mathbb S}\, :\, {\mathcal B}_g(r)\setminus \Theta\, \longrightarrow\, {\mathcal H}_g(r)
$$
be the map that sends any $(X,\, K^{1/2}_X,\, F)\,\in\, {\mathcal B}_g(r)\setminus \Theta$
to $(X,\, K^{1/2}_X,\, F,\, \widehat{\gamma}'_F)$. 
It is evident that $\gamma\circ{\mathbb S}\,=\, {\rm Id}_{{\mathcal B}_g(r)\setminus \Theta}$.
\end{proof}

The above map ${\mathbb S}$ can also be described using Corollary \ref{cor2}.

\section{A canonical isomorphism of torsors and a symplectic structure}

\subsection{A canonical isomorphism of torsors}

Let
\begin{equation}\label{e36}
{\mathcal L}\,:=\, {\mathcal O}_{{\mathcal B}_g(r)}(\Theta)\, \longrightarrow\, {\mathcal B}_g(r)
\end{equation}
be the line bundle corresponding to the reduced divisor $\Theta$ in \eqref{e27}. Consider the short
exact sequence
\begin{equation}\label{atl}
0\, \longrightarrow\,{\mathcal O}_{{\mathcal B}_g(r)}\,=\, {\rm Diff}^0_X({\mathcal L},\, {\mathcal L})
\, \longrightarrow\,{\rm At}({\mathcal L})\, :=\, {\rm Diff}^1_X({\mathcal L},\, {\mathcal L})
\, \stackrel{\zeta}{\longrightarrow}\,T{\mathcal B}_g(r) \, \longrightarrow\,0\,,
\end{equation}
where $\zeta$ is the symbol map. We note that \eqref{atl} coincides with
the Atiyah exact sequence for the line bundle $\mathcal L$ in \eqref{e36} (see \eqref{e11}). Let
\begin{equation}\label{e36a}
0\, \longrightarrow\,T^*{{\mathcal B}_g(r)}\, \longrightarrow\,{\rm At}({\mathcal L})^*
\, \stackrel{\eta}{\longrightarrow}\,{\mathcal O}_{{\mathcal B}_g(r)} \, \longrightarrow\,0
\end{equation}
be the dual of the sequence in \eqref{atl}. Define
\begin{equation}\label{e37}
{\mathcal C}({\mathcal L})\,:=\, \eta^{-1}({\rm image}({\mathbf 1}))\, \subset\, {\rm At}({\mathcal L})^*\, ,
\end{equation}
where $\eta$ is the projection in \eqref{e36a}, and $\mathbf 1$ is the section of
${\mathcal O}_{{\mathcal B}_g(r)}$ given by the constant function $1$ on ${\mathcal B}_g(r)$. Let
\begin{equation}\label{e38}
\rho\,\, :\,\, {\mathcal C}({\mathcal L})\, \longrightarrow\, {\mathcal B}_g(r)
\end{equation}
be the restriction of the natural projection ${\rm At}({\mathcal L})^*\,\longrightarrow\, {\mathcal B}_g(r)$. From
\eqref{e36a} it follows that ${\mathcal C}({\mathcal L})$ is an algebraic torsor
over ${\mathcal B}_g(r)$ for the holomorphic cotangent bundle $T^*{{\mathcal B}_g(r)}$. Giving a
holomorphic section of ${\mathcal C}({\mathcal L})$ over an open subset $U\, \subset\, {\mathcal B}_g(r)$
is equivalent to giving a holomorphic connection on the line bundle ${\mathcal L}\big\vert_U
\, \longrightarrow\, U$.

Recall from Theorem \ref{thm1} the algebraic torsor $\gamma\, :\, {\mathcal H}_g(r)\, \longrightarrow
\, {\mathcal B}_g(r)$ for the holomorphic cotangent bundle $T^*{{\mathcal B}_g(r)}$. Using
Lemma \ref{lem8} it can be deduced that the two torsors ${\mathcal C}({\mathcal L})$ (constructed in \eqref{e38})
and ${\mathcal H}_g(r)$, over ${\mathcal B}_g(r)$ for the holomorphic cotangent bundle $T^*{{\mathcal B}_g(r)}$,
are naturally identified when restricted to the open subset ${\mathcal B}_g(r)
\setminus \Theta \, \subset\, {\mathcal B}_g(r)$, where $\Theta$ is the divisor in \eqref{e27}. To explain this,
define
\begin{gather}
{\mathcal H}^0_g(r)\,:=\, \gamma^{-1}({\mathcal B}_g(r) \setminus \Theta)\,\subset\, {\mathcal H}_g(r)\label{e39}\\
{\mathcal C}({\mathcal L})^0\,:=\, \rho^{-1}({\mathcal B}_g(r) \setminus \Theta)\,\subset\,
{\mathcal C}({\mathcal L})\label{e40}
\end{gather}
where $\gamma$ and $\rho$ are the projections in \eqref{e26} and \eqref{e38} respectively. Since the
restriction of ${\mathcal L}$ to ${\mathcal B}_g(r) \setminus \Theta
\, \subset\, {\mathcal B}_g(r)$ is the trivial bundle ${\mathcal O}_{{\mathcal B}_g(r)\setminus\Theta}$,
there is a natural integrable algebraic connection $\nabla^0$ on the restriction of 
${\mathcal L}$ to ${\mathcal B}_g(r) \setminus \Theta$ which is given by the de Rham differential
$d$ on ${\mathcal O}_{{\mathcal B}_g(r)\setminus\Theta}$.
This connection $\nabla^0$ produces an algebraic
splitting of the Atiyah exact sequence for ${\mathcal L}\big\vert_{{\mathcal B}_g(r) \setminus \Theta}$
(see \eqref{atl}), which in turn
produces an algebraic splitting, over ${\mathcal B}_g(r) \setminus \Theta$, of the short
exact sequence in \eqref{e36a}. Hence we get an algebraic section
\begin{equation}\label{e41}
G\,\, :\,\, {\mathcal B}_g(r) \setminus \Theta\, \longrightarrow\, {\mathcal C}({\mathcal L})^0
\end{equation}
of the bundle $\rho\, :\, {\mathcal C}({\mathcal L})^0\, \longrightarrow\, {\mathcal B}_g(r) \setminus \Theta$
in \eqref{e40}. To explain the construction of $G$, if
$$
s_0\, :\, {\mathcal O}_{{\mathcal B}_g(r) \setminus \Theta}\, \longrightarrow\,
{\rm At}({\mathcal L})^*\big\vert_{{\mathcal B}_g(r) \setminus \Theta}
$$
is the splitting homomorphism for \eqref{e36a} over ${\mathcal B}_g(r) \setminus \Theta$, then $G(z)\,=\, s_0({\mathbf 1}(z))$
for all $z\, \in\, {\mathcal B}_g(r) \setminus \Theta$, where ${\mathbf 1}$ as before is
the section of ${\mathcal O}_{{\mathcal B}_g(r) \setminus \Theta}$ given by the constant function $1$ on
${\mathcal B}_g(r) \setminus \Theta$.

Consider ${\mathcal H}^0_g(r)$ and ${\mathcal C}({\mathcal L})^0$
constructed in \eqref{e39} and \eqref{e40} respectively. We have a map
\begin{equation}\label{e42}
H\,\, :\,\, {\mathcal H}^0_g(r)\, \longrightarrow\, {\mathcal C}({\mathcal L})^0
\end{equation}
that sends any $v\, \in\, {\mathcal H}^0_g(r)$ to
\begin{equation}\label{e43}
G(\gamma(v))+(v-{\mathbb S}(\gamma(v)))\, ,
\end{equation}
where $\mathbb S$, $G$ and $\gamma$ are constructed in Lemma \ref{lem8}, \eqref{e41} and \eqref{e26} respectively;
note that $v-{\mathbb S}(\gamma(v))$ in \eqref{e43} is an element of $T^*_{\gamma(v)}{\mathcal B}_g(r)$ (recall
that ${\mathcal H}_g(r)$ is a torsor for $T^*{\mathcal B}_g(r)$ by Theorem \ref{thm1}), so the sum in
\eqref{e43} is an element of ${\mathcal C}({\mathcal L})^0$ because ${\mathcal C}({\mathcal L})^0$ is a torsor
over ${\mathcal B}_g(r)\setminus\Theta$ for $T^*({\mathcal B}_g(r)\setminus\Theta)$ (see \eqref{e37} and \eqref{e40}).

It is evident that $H$ in \eqref{e42} is an algebraic isomorphism of torsors over
${\mathcal B}_g(r)\setminus\Theta$ for $T^*({\mathcal B}_g(r)\setminus\Theta)$.

\begin{theorem}\label{thm3}
The algebraic isomorphism $H$ in \eqref{e42}, of torsors
over ${\mathcal B}_g(r)\setminus\Theta$ for $T^*({\mathcal B}_g(r)\setminus\Theta)$, extends to an
algebraic isomorphism of $T^*{\mathcal B}_g(r)$--torsors
$$
{\mathcal H}\,\, :\,\, {\mathcal H}_g(r)\, \longrightarrow\, {\mathcal C}({\mathcal L})
$$
over entire ${\mathcal B}_g(r)$.
\end{theorem}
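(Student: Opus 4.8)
The plan is to extend the isomorphism by showing that the rational map ${\mathcal H}$ prolonging $H$ acquires no pole along $\Theta$, and then to let the torsor structure force the extension to be an isomorphism automatically. Since both $\gamma\,:\,{\mathcal H}_g(r)\,\to\,{\mathcal B}_g(r)$ and $\rho\,:\,{\mathcal C}({\mathcal L})\,\to\,{\mathcal B}_g(r)$ are algebraic torsors for $T^*{\mathcal B}_g(r)$ (Theorem \ref{thm1} and the paragraph following \eqref{e38}), and $H$ in \eqref{e42} is already an isomorphism of such torsors over the dense open subset ${\mathcal B}_g(r)\setminus\Theta$, the map $H$ prolongs at least to a rational map of total spaces; the only issue is regularity along the divisor $\Theta$.

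First I would reduce the extension problem to a local pole-cancellation statement. Fix a point of the smooth locus $\Theta^{\mathrm{sm}}$ and choose local algebraic sections $\tau$ of $\gamma$ and $\kappa$ of $\rho$ near it. Because $H$ is $T^*{\mathcal B}_g(r)$-equivariant over the complement of $\Theta$, extending $H$ across $\Theta$ is equivalent to showing that the difference $H(\tau)-\kappa$, a priori a section of $T^*{\mathcal B}_g(r)$ defined only on the complement of $\Theta$, is in fact holomorphic across $\Theta$. Using the explicit formula \eqref{e43} I would split this difference as
$$
H(\tau)-\kappa \,=\, \big(G(\gamma(\tau))-\kappa\big) \,+\, \big(\tau - {\mathbb S}(\gamma(\tau))\big)\, ,
$$
isolating the two possible sources of a pole along $\Theta$: the section $G$ of \eqref{e41} (the connection on ${\mathcal L}$ given away from $\Theta$ by the tautological trivialization) and the section ${\mathbb S}$ of Lemma \ref{lem8} (the canonical $r$-oper built from $\beta_F$).

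The heart of the argument, and the step I expect to be the main obstacle, is the identification and matching of these two polar parts along $\Theta^{\mathrm{sm}}$. On the ${\mathcal C}({\mathcal L})$ side, $G$ is the connection $d$ in the trivialization of ${\mathcal L}\,=\,{\mathcal O}_{{\mathcal B}_g(r)}(\Theta)$ by the canonical section; relative to a frame holomorphic across $\Theta$ it is the logarithmic connection with residue $1$ along $\Theta$, so its polar part is $d\log\theta$, where $\theta$ is a local defining equation of $\Theta$. On the ${\mathcal H}_g(r)$ side, ${\mathbb S}$ degenerates along $\Theta$ precisely because its construction via $\beta_F$ in \eqref{e29} and the trace normalization $\sigma_1$ in \eqref{e32} requires $H^0(X,\,F\otimes K^{1/2}_X)\,=\,0$; its leading singular term along $\Theta$ is again logarithmic, and by the naturality of the construction it equals the same $d\log\theta$. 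This is the family version of the fiberwise pole-matching established in \cite{BH1}, \cite{BH2}: restricting to a fixed $(X,\,K^{1/2}_X)$ recovers the theta divisor inside ${\mathcal N}_X(r)$ together with the computation there, while the normalization by $\frac{1}{r}{\rm trace}$ in \eqref{e32} guarantees that the residues agree in the remaining curve and theta-characteristic directions as well. Consequently the two polar parts cancel in $H(\tau)-\kappa$, which is therefore holomorphic across $\Theta^{\mathrm{sm}}$.

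Finally I would globalize and conclude. The pole cancellation extends ${\mathcal H}$ across the smooth locus $\Theta^{\mathrm{sm}}$; since $\Theta\setminus\Theta^{\mathrm{sm}}$ has codimension at least $2$ in the smooth orbifold ${\mathcal B}_g(r)$ and the target is an affine bundle, the extension propagates over the remaining locus by normality (a Hartogs-type argument). The resulting algebraic morphism ${\mathcal H}\,:\,{\mathcal H}_g(r)\,\to\,{\mathcal C}({\mathcal L})$ satisfies the equivariance identity ${\mathcal H}(v+\omega)\,=\,{\mathcal H}(v)+\omega$ for $\omega\,\in\,T^*{\mathcal B}_g(r)$ on the dense open subset ${\mathcal B}_g(r)\setminus\Theta$, hence everywhere by continuity; and any $T^*{\mathcal B}_g(r)$-equivariant morphism between two $T^*{\mathcal B}_g(r)$-torsors over ${\mathcal B}_g(r)$ is automatically an isomorphism, because on each fiber it is an affine map with identity linear part. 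This yields the desired isomorphism of $T^*{\mathcal B}_g(r)$-torsors over all of ${\mathcal B}_g(r)$.
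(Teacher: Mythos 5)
Your reduction is sound as far as it goes: the fiberwise part of the pole cancellation (fixed $(X,K^{1/2}_X)$, varying $F$) is exactly what \cite{BH1}, \cite{BH2} establish, and the paper likewise imports it, via \cite[Lemma 3.1]{BH2} and \cite[Corollary 4.5]{BH2}, to extend the induced map $\widehat{H}$ of $T^*_\beta$--torsors (the quotients of the two torsors by $\beta^*T^*{\mathcal M}^\theta_g$) across $\Theta$. The gap is in the directions transverse to the fibers of $\beta$, i.e.\ the $T^*{\mathcal M}^\theta_g$--component of $H(\tau)-\kappa$. Your justification there is the single sentence that ``by the naturality of the construction'' the residue of ${\mathbb S}$ along $\Theta$ in the curve and theta-characteristic directions equals $d\log\theta$, with the $\frac{1}{r}{\rm trace}$ normalization of \eqref{e32} ``guaranteeing'' agreement. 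That is an assertion, not an argument: nothing in \cite{BH2} computes the asymptotics of $\beta_F$ as the \emph{curve} varies toward $\Theta$, and the trace normalization controls the symbol and sub-leading term of the operator $\widehat{\gamma}_F$, not the residue of a $1$-form on moduli space in the $3g-3$ horizontal directions. This is precisely the new difficulty of the theorem beyond \cite{BH2}, and it is where your proof stops being a proof.

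The paper circumvents the local residue computation entirely with a global argument, and it is worth seeing why some such substitute is needed. After the fiberwise extension, the remaining ambiguity is packaged as a torsor ${\mathcal G}$ for $\beta^*T^*{\mathcal M}^\theta_g$ over ${\mathcal B}_g(r)$ whose sections over $U$ are exactly the lifts of $\widehat{H}'$ to isomorphisms of the original $T^*{\mathcal B}_g(r)$--torsors. Two nontrivial global inputs then finish the job: first, the involution $F\longmapsto F^*$ (which preserves $\Theta$ and acts on $H^1({\mathcal B}^t,{\mathcal O}_{{\mathcal B}^t})\cong H^1({\rm Pic}^0(X),{\mathcal O})$ as $-1$) forces the obstruction class of ${\mathcal G}^t$ to vanish, so ${\mathcal G}^t$ is a trivial torsor and $H^t$ becomes a $T^*_t{\mathcal M}^\theta_g$--valued meromorphic function on ${\mathcal B}^t$; second, $H^t$ has pole order at most one along ${\mathcal B}^t\cap\Theta$, and $H^0({\mathcal B}^t,{\mathcal O}_{{\mathcal B}^t}({\mathcal B}^t\cap\Theta))\,=\,{\mathbb C}$ by \cite[p.~169, Theorem 2]{BNR}, which forces $H^t$ to be regular (a meromorphic function with polar divisor bounded by $\Theta$ is a section of ${\mathcal O}(\Theta)$ divided by the canonical one, hence constant). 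Your proposal contains neither ingredient, and without them the claimed cancellation of the horizontal polar parts is unsubstantiated. The closing observations (Hartogs across $\Theta\setminus\Theta^{\rm sm}$, and that an equivariant map of torsors is automatically an isomorphism) are fine but do not touch the real issue.
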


\begin{proof}
Consider the submersion $\beta$ in \eqref{e25}. The kernel of the differential of $\beta$
$$
d\beta\, :\, T{\mathcal B}_g(r)\, \longrightarrow\, \beta^*T{\mathcal M}^\theta_g
$$
will be denoted by $T_\beta$; in other words, $T_\beta$ is the relative tangent bundle
for the projection $\beta$. So we have the short exact sequence of vector bundles
$$
0 \, \longrightarrow\, \beta^*T^*{\mathcal M}^\theta_g \,\xrightarrow{(d\beta)^*}\, 
T^*{\mathcal B}_g(r) \, \longrightarrow\, T^*_\beta\, :=\, (T_\beta)^* \, \longrightarrow\,0
$$
over ${\mathcal B}_g(r)$. Let ${\mathcal W}\, \longrightarrow\, {\mathcal B}_g(r)$ be an algebraic
torsor over ${\mathcal B}_g(r)$ for $T^*{\mathcal B}_g(r)$. Then
\begin{equation}\label{t2}
{\mathcal W}/(\beta^*T^*{\mathcal M}^\theta_g)\, \longrightarrow\, {\mathcal B}_g(r)
\end{equation}
is an algebraic torsor over ${\mathcal B}_g(r)$ for the vector bundle $T^*_\beta$.

Substitute the two $T^*{\mathcal B}_g(r)$--torsors ${\mathcal H}_g(r)$ and ${\mathcal C}({\mathcal L})$
in place of the above $T^*{\mathcal B}_g(r)$--torsor $\mathcal W$. The construction in \eqref{t2} produces
two $T^*_\beta$--torsors over ${\mathcal B}_g(r)$ from ${\mathcal H}_g(r)$ and ${\mathcal C}({\mathcal L})$;
these two $T^*_\beta$--torsors will
be denoted by $\widehat{\mathcal H}_g(r)$ and $\widehat{\mathcal C}({\mathcal L})$ respectively.
The restrictions of $\widehat{\mathcal H}_g(r)$ and $\widehat{\mathcal C}({\mathcal L})$ to the
Zariski open subset $${\mathcal B}_g(r)\setminus \Theta\, \subset\, {\mathcal B}_g(r)$$ will be denoted by
$\widehat{\mathcal H}^0_g(r)$ and $\widehat{\mathcal C}({\mathcal L})^0$ respectively.

The isomorphism $H$ in \eqref{e42} produces an algebraic isomorphism of $T^*_\beta$--torsors over
${\mathcal B}_g(r)\setminus \Theta$
\begin{equation}\label{t3}
\widehat{H}\,\, :\,\, \widehat{\mathcal H}^0_g(r)\, \longrightarrow\, \widehat{\mathcal C}({\mathcal L})^0\, .
\end{equation}
For each point $z\,\in\, {\mathcal M}^\theta_g$, consider the restriction of the isomorphism $\widehat H$ in \eqref{t3}
to the complement $\beta^{-1}(z)\setminus (\beta^{-1}(z)\cap\Theta)$, where $\beta$ is the projection in
\eqref{e25}. This restriction coincides with the isomorphism constructed in \cite[Lemma 3.1]{BH2}. Therefore, from
\cite[Corollary 4.5]{BH2} we know that $\widehat{H}$ in \eqref{t3} extends to an algebraic isomorphism
of $T^*_\beta$--torsors
\begin{equation}\label{t4}
\widehat{H}'\,\, :\,\, \widehat{\mathcal H}_g(r)\, \longrightarrow\, \widehat{\mathcal C}({\mathcal L})\, .
\end{equation}
over entire ${\mathcal B}_g(r)$. Let
\begin{equation}\label{o1}
{\mathbb G}\,\, :=\,\, \{(y,\, \widehat{H}'(y))\, \mid\, y\, \in\, \widehat{\mathcal H}_g(r)\}\, \subset\,
\widehat{\mathcal H}_g(r)\times_{{\mathcal B}_g(r)} \widehat{\mathcal C}({\mathcal L})
\end{equation}
be the graph of the map $\widehat{H}'$ in \eqref{t4}.

Consider the natural quotient map
$$
{\mathcal H}_g(r)\times_{{\mathcal B}_g(r)}{\mathcal C}({\mathcal L})\, \stackrel{{\mathbf q}_1}{\longrightarrow}\,
({\mathcal H}_g(r)/(\beta^*T^*{\mathcal M}^\theta_g))
\times_{{\mathcal B}_g(r)} ({\mathcal C}({\mathcal L})/(\beta^*T^*{\mathcal M}^\theta_g))\,=\,
\widehat{\mathcal H}_g(r)\times_{{\mathcal B}_g(r)} \widehat{\mathcal C}({\mathcal L})\, .
$$
The inverse image
$$
{\mathcal G}_1\, :=\, {\mathbf q}^{-1}_1({\mathbb G})\, \subset\,
{\mathcal H}_g(r)\times_{{\mathcal B}_g(r)}{\mathcal C}({\mathcal L})\, ,
$$
where ${\mathbb G}$ is constructed in \eqref{o1}, is an algebraic torsor over ${\mathcal B}_g(r)$ for
the vector bundle $(\beta^*T^*{\mathcal M}^\theta_g)^{\oplus 2}$. Let
\begin{equation}\label{o2}
{\mathcal G}\,:=\, {\mathcal G}_1/(\beta^*T^*{\mathcal M}^\theta_g)\, \stackrel{\mathbf q}{\longrightarrow}
\, {\mathcal B}_g(r)
\end{equation}
be the quotient for the diagonal action of $\beta^*T^*{\mathcal M}^\theta_g$; so for any
$z\, \in\, {\mathcal B}_g(r)$, two elements $(x,\, y)$ and $(x',\, y')$ of the fiber
$({\mathcal G}_1)_z$ give the same element of the fiber ${\mathcal G}_z$ if and only if there is an element
$v\, \in\, (\beta^*T^*{\mathcal M}^\theta_g)_z\,=\, T^*_{\beta(z)}{\mathcal M}^\theta_g$ such that
$x'\,=\, x+v$ and $y'\,=\, y+v$. Therefore, we have a map
$$
\nu\,\, :\,\,{\mathcal H}_g(r)\times_{{\mathcal B}_g(r)} {\mathcal G}\,\, \longrightarrow\,
\,{\mathcal C}({\mathcal L})
$$
that sends any $(h,\, (x,\, y))$, where
$h\, \in\, {\mathcal H}_g(r)_z$,\, $(x,\, y)\,\in\, {\mathcal G}_z$ and $z\,\in\,
{\mathcal B}_g(r)$, to $$y+(h-x)\,\, \in\,\, {\mathcal C}({\mathcal L})_z$$ (note that
$h-x\,\in\, T^*_z{\mathcal B}_g(r)$). Using this map $\nu$,
the space of holomorphic sections of ${\mathcal G}$ over any open subset $U\, \subset\, {\mathcal B}_g(r)$
is identified with the space of holomorphic isomorphisms ${\mathcal H}_g(r)\big\vert_U \, \longrightarrow\,
{\mathcal C}({\mathcal L})\big\vert_U$, of the torsor over $U$ for $T^*U$, that induce the isomorphism
$\widehat{H}'\big\vert_U$ in \eqref{t4} of $T^*_\beta\big\vert_U$--torsors.

We note that ${\mathcal G}$ is an algebraic torsor over ${\mathcal B}_g(r)$ for the vector bundle
$\beta^*T^*{\mathcal M}^\theta_g$, where $\beta$ is the projection in
\eqref{e25}, as follows: For any $z\, \in\, {\mathcal B}_g(r)$, take
$(x,\, y)\,\in\, {\mathcal G}_z$ and $v\, \in\, (\beta^*T^*{\mathcal M}^\theta_g)_z\,=\,
T^*_{\beta(z)}{\mathcal M}^\theta_g$; then we have
$$
(x,\, y) +v\,=\, (x+v,\, y-v)\, .
$$

The moduli space ${\mathcal B}_g(r)$ has an algebraic involution
\begin{equation}\label{o3}
{\mathcal I}_B\, \, :\, \, {\mathcal B}_g(r)\, \longrightarrow\, {\mathcal B}_g(r)\, ,\ \ \,
(X,\, K^{1/2}_X,\, F)\, \longmapsto\, (X,\, K^{1/2}_X,\, F^*)\, .
\end{equation}
This involution ${\mathcal I}_B$ preserves the divisor $\Theta$ in \eqref{e27} (see \eqref{o-1}). Hence the
action ${\mathbb Z}/2{\mathbb Z}$ on ${\mathcal B}_g(r)$ given by ${\mathcal I}_B$ lifts to the line bundle
$\mathcal L$ in \eqref{e36}. Consequently, the involution ${\mathcal I}_B$ lifts to an involution
\begin{equation}\label{o4}
{\mathcal I}_C\, \, :\, \, {\mathcal C}({\mathcal L})\, \longrightarrow\, {\mathcal C}({\mathcal L})
\end{equation}
of ${\mathcal C}({\mathcal L})$.

We will now describe the relationship between ${\mathcal I}_C$ and
the $T^*{\mathcal B}_g(r)$--torsor structure of ${\mathcal C}({\mathcal L})$. Take any $y\, \in\, {\mathcal B}_g(r)$,
$z\, \in\, {\mathcal C}({\mathcal L})_y$ and $w\, \in\, T^*_y{\mathcal B}_g(r)$. Then we have
\begin{equation}\label{o5}
{\mathcal I}_C(z+w)\,=\, {\mathcal I}_C(z) - (d{\mathcal I}_B)^*_{{\mathcal I}_B(y)} (w)\, ,
\end{equation}
where $(d{\mathcal I}_B)^*_{{\mathcal I}_B(y)}\, :\, T^*_y{\mathcal B}_g(r)\, \longrightarrow\,
T^*_{{\mathcal I}_B(y)}{\mathcal B}_g(r)$ is the dual of the differential $(d{\mathcal I}_B)_{{\mathcal I}_B(y)}\,:\,
T_{{\mathcal I}_B(y)}{\mathcal B}_g(r)\, \longrightarrow\, T_y{\mathcal B}_g(r)$
of the map ${\mathcal I}_B$ at the point ${\mathcal I}_B(y)$.

Since $\bigwedge^2J^1(K^{-1/2}_X)\,=\, K_X\otimes (K^{-1/2}_X)^{\otimes 2}\,=\, {\mathcal O}_X$ (see
\eqref{ej}), it follows that $J^1(K^{-1/2}_X)\,=\, J^1(K^{-1/2}_X)^*$. Let $F$ be a stable vector bundle
on $X$ of rank $r$ and degree zero. Since $F$ admits a holomorphic connection (see the proof of
Proposition \ref{prop1}), from Proposition \ref{prop-1} we conclude that
$$
J^1(F^*\otimes K^{-1/2}_X)\,=\, F^*\otimes J^1(K^{-1/2}_X)\,=\, F^*\otimes J^1(K^{-1/2}_X)^*
\,=\, J^1(F\otimes K^{-1/2}_X)^*\, .
$$
Fixing an isomorphism of $J^1(F^*\otimes K^{-1/2}_X)$ with $J^1(F\otimes K^{-1/2}_X)^*$ we conclude that any
holomorphic connection on $J^1(F\otimes K^{-1/2}_X)$ produces a holomorphic connection on
$J^1(F\otimes K^{-1/2}_X)^*\,=\, J^1(F^*\otimes K^{-1/2}_X)$. It is straightforward to check that this
produces a bijection
$$
M_F\,\, :\,\, {\mathcal D}(F)\, \longrightarrow\, {\mathcal D}(F^*)\, ,
$$
where ${\mathcal D}(F)$ is constructed in \eqref{qe}. This map $M_F$ does not depend on the choice of the
isomorphism of $J^1(F^*\otimes K^{-1/2}_X)$ with $J^1(F\otimes K^{-1/2}_X)^*$; this is because
in the construction of ${\mathcal D}(F)$ quotienting by $\text{Aut}(F\otimes K^{-1/2}_X)$ was executed.
Now we have an algebraic involution
\begin{equation}\label{o6}
{\mathcal I}_H\, \, :\, \, {\mathcal H}_g(r)\, \longrightarrow\,{\mathcal H}_g(r)\, ,\ \ \,
(X,\, K^{1/2}_X,\, F,\, D)\, \longmapsto\, (X,\, K^{1/2}_X,\, F^*,\, M_F(D))\, ,
\end{equation}
where $M_F$ is the map constructed above. It is evident that
$$
\gamma\circ {\mathcal I}_H\,=\, {\mathcal I}_B\circ \gamma\, ,
$$
where $\gamma$ and ${\mathcal I}_B$ are the maps in \eqref{e26} and \eqref{o3} respectively.

We will describe the relationship between ${\mathcal I}_H$ and
the $T^*{\mathcal B}_g(r)$--torsor structure of ${\mathcal H}_g(r)$. Take any $y\, \in\, {\mathcal B}_g(r)$,
$z\, \in\, {\mathcal H}_g(r)_y$ and $w\, \in\, T^*_y{\mathcal B}_g(r)$. Then we have
\begin{equation}\label{o7}
{\mathcal I}_H(z+w)\,=\, {\mathcal I}_H(z) - (d{\mathcal I}_B)^*_{{\mathcal I}_B(y)} (w)\, ,
\end{equation}
where $(d{\mathcal I}_B)^*_{{\mathcal I}_B(y)}$ is the homomorphism in \eqref{o5}.

We note that the projection $\beta$ in \eqref{e25} satisfies the equation
$$\beta\circ {\mathcal I}_B\,=\, \beta\, .$$
In view of this, from \eqref{o5} and \eqref{o7} we conclude the following:
\begin{enumerate}
\item The involution ${\mathcal I}_B$ lifts to an involution
\begin{equation}\label{o8}
{\mathcal I}_G\, \,:\,\, {\mathcal G}\, \longrightarrow\, {\mathcal G}
\end{equation}
of ${\mathcal G}$ constructed in \eqref{o2}. For any $z\, \in\, {\mathcal B}_g(r)$, take
$(x,\, y)\,\in\, {\mathcal G}_z$; the map ${\mathcal I}_G$ sends $(x,\, y)$ to
$({\mathcal I}_H(x),\, {\mathcal I}_C(y))$, where ${\mathcal I}_H$ and ${\mathcal I}_C$ are
the involutions in \eqref{o6} and \eqref{o4} respectively.

\item For any $v\, \in\, (\beta^*T^*{\mathcal M}^\theta_g)_z\,=\,
T^*_{\beta(z)}{\mathcal M}^\theta_g$,
\begin{equation}\label{o9}
{\mathcal I}_G((x,y) +v)\,\,=\,\,{\mathcal I}_G((x,y)) +v\, ;
\end{equation}
recall that ${\mathcal G}$ is a torsor for $\beta^*T^*{\mathcal M}^\theta_g$.
\end{enumerate}

Take any point
$$
t\, \,:=\,\, (X,\, K^{1/2}_X)\,\, \in\,\,{\mathcal M}^\theta_g\, .
$$
Let
\begin{equation}\label{o10}
{\mathcal G}^t\, \,:=\, {\mathcal G}\big\vert_{\beta^{-1}(t)} \, \stackrel{{\mathbf q}^t}{\longrightarrow}
\, {\mathcal B}^t\, :=\, \beta^{-1}(t)
\end{equation}
be the restriction, where ${\mathbf q}^t$ is the restriction of the projection $\mathbf q$ in \eqref{o2},
and $\beta$ is the projection in \eqref{e25}. So ${\mathcal G}^t$ is an algebraic torsor over ${\mathcal B}^t$ for the
trivial vector bundle $${\mathcal V}^t\, :=\, {\mathcal B}^t\times T^*_t{\mathcal M}^\theta_g\,
\longrightarrow\, {\mathcal B}^t$$
over ${\mathcal B}^t$ with fiber $T^*_t{\mathcal M}^\theta_g$; this is because
${\mathcal G}$ is a torsor for $\beta^*T^*{\mathcal M}^\theta_g$.

Let
$$
{\mathcal I}^t_B\, :=\, {\mathcal I}_B\big\vert_{{\mathcal B}^t}
\, \, :\, \, {\mathcal B}^t\, \longrightarrow\,{\mathcal B}^t
$$
be the restriction of ${\mathcal I}_B$ in \eqref{o3} to the subvariety ${\mathcal B}^t$ in \eqref{o10}.

We note that the isomorphism classes of algebraic ${\mathcal V}^t$--torsors over the variety
${\mathcal B}^t$ in \eqref{o10} are parametrized by
$$
H^1({\mathcal B}^t,\, {\mathcal V}^t)\,=\, H^1({\mathcal B}^t,\,
{\mathcal O}_{{\mathcal B}^t})\otimes T^*_t{\mathcal M}^\theta_g\, .
$$
Let
\begin{equation}\label{o11}
{\mathbf c}\,\,\in\,\, H^1({\mathcal B}^t,\,
{\mathcal O}_{{\mathcal B}^t})\otimes T^*_t{\mathcal M}^\theta_g
\end{equation}
be the class of the ${\mathcal V}^t$--torsor ${\mathcal G}^t$ in \eqref{o10}. From \eqref{o9}
it follows immediately that
\begin{equation}\label{o12}
({\mathcal I}^t_B)^*{\mathbf c}\,\,=\,\, {\mathbf c}\, .
\end{equation}

Consider the determinant map
$$
\widetilde{\delta}\, :\, {\mathcal B}^t\, \longrightarrow\, \text{Pic}^0(X)\, ,\ \ \, F\, \longmapsto\,
\bigwedge\nolimits^r F\, .
$$
The corresponding homomorphism
\begin{equation}\label{dc}
\widetilde{\delta}^*\,:\, H^1(\text{Pic}^0(X),\, {\mathcal O}_{\text{Pic}^0(X)})
\, \longrightarrow\, H^1({\mathcal B}^t,\,
{\mathcal O}_{{\mathcal B}^t})
\end{equation}
is an isomorphism. Indeed, $\text{Pic}({\mathcal B}^t) \,=\, \text{Pic}(\text{Pic}^0(X))\oplus {\mathbb Z}$
\cite[p.~57, Theorem D]{DN} when $r \, \geq\,2$ and $(g,\, r)\, \not=\, (2,\, 2)$ (when $g\,=\,2\,=\, r$, the
quotient $\text{Pic}({\mathcal B}^t)/\text{Pic}(\text{Pic}^0(X))$ is a finite group), so using the
exact sequence of cohomologies
$$
H^1(Y,\, 2\pi\sqrt{-1}\cdot{\mathbb Z})\, \longrightarrow\, H^1(Y,\, {\mathcal O}_Y) \, \longrightarrow\,
H^1(Y,\, {\mathcal O}^*_Y) \, \longrightarrow\, H^2(Y,\, 2\pi\sqrt{-1}\cdot{\mathbb Z})
$$
associated to the exponential sequence
$$
0\, \longrightarrow\, 2\pi\sqrt{-1}\cdot{\mathbb Z} \, \longrightarrow\,{\mathcal O}_Y \,
\xrightarrow{\,\exp\, \,}\,{\mathcal O}^*_Y \, \longrightarrow\,0
$$
on a complex variety $Y$ we conclude that $\widetilde{\delta}^*$ in \eqref{dc} is an isomorphism.
We also note that the involution
$$
\text{Pic}^0(X)\, \longrightarrow\, \text{Pic}^0(X)\, ,\ \ \, \xi\, \longmapsto\, \xi^*
$$
acts on $H^1(\text{Pic}^0(X),\, {\mathcal O}_{\text{Pic}^0(X)})$ as multiplication by $-1$. In other
words, no nonzero element of $H^1(\text{Pic}^0(X),\, {\mathcal O}_{\text{Pic}^0(X)})$ is fixed by this
involution. In view of these, from \eqref{o12} we conclude that
$$
{\mathbf c}\,=\, 0\, .
$$
So from \eqref{o11} we conclude that the ${\mathcal V}^t$--torsor ${\mathcal G}^t$ in \eqref{o10}
is the trivial ${\mathcal V}^t$--torsor
$$
{\mathcal V}^t\,=\, {\mathcal B}^t\times T^*_t{\mathcal M}^\theta_g \, \longrightarrow\, {\mathcal B}^t\, .
$$

Restrict $H$ (constructed in \eqref{e42}) to ${\mathcal B}^t\, \subset\,{\mathcal B}_g(r)$;
denote this restriction by $H^t$. From the above isomorphism of ${\mathcal G}^t$ with the
trivial ${\mathcal V}^t$--torsor ${\mathcal V}^t$ it follows that
$H^t$ is a meromorphic function on ${\mathcal B}^t$ with values in the vector space
$T^*_t{\mathcal M}^\theta_g$ (recall that ${\mathcal V}^t\, :=\,
{\mathcal B}^t\times T^*_t{\mathcal M}^\theta_g$). This meromorphic function is evidently regular
on the complement ${\mathcal B}^t\setminus ({\mathcal B}^t\cap\Theta)$, where $\Theta$ is
constructed in \eqref{e27}. From the construction of $H$ it is straightforward to deduce that
$H^t$ has a pole of order at most one on the divisor ${\mathcal B}^t\cap\Theta\, \subset\,
{\mathcal B}^t$. On the other hand, we know that
$$
H^0({\mathcal B}^t,\, {\mathcal O}_{{\mathcal B}^t}({\mathcal B}^t\cap\Theta))\,=\,
H^0({\mathcal B}^t,\, {\mathcal O}_{{\mathcal B}^t})\,=\, \mathbb C
$$
\cite[p.~169, Theorem 2]{BNR}. Consequently, the section $H^t$ over ${\mathcal B}^t\setminus ({\mathcal B}^t\cap\Theta)$
extends to entire ${\mathcal B}^t$ as a regular section. From this it follows immediately
that the isomorphism $H$ of torsors
over ${\mathcal B}_g(r)\setminus\Theta$ for $T^*({\mathcal B}_g(r)\setminus\Theta)$, extends to an
algebraic isomorphism of $T^*{\mathcal B}_g(r)$--torsors
${\mathcal H}_g(r)\, \longrightarrow\, {\mathcal C}({\mathcal L})$
over entire ${\mathcal B}_g(r)$.
\end{proof}

\subsection{A holomorphic symplectic form}

Recall the holomorphic line bundle ${\mathcal L}\, \longrightarrow\, {\mathcal B}_g(r)$ in \eqref{e36}.
The space ${\mathcal C}({\mathcal L})$ in \eqref{e38} has a canonical holomorphic symplectic structure.
We will briefly recall the construction of this symplectic form on ${\mathcal C}({\mathcal L})$.

From the construction of ${\mathcal C}({\mathcal L})$ in \eqref{e37} it follows immediately that there is
a tautological holomorphic splitting
$$
\rho^*\text{At}({\mathcal L})\,=\, \rho^*{\mathcal O}_{{\mathcal B}_g(r)}\oplus\rho^* T{\mathcal B}_g(r)
\,=\, {\mathcal O}_{{\mathcal C}({\mathcal L})} \oplus\rho^* T{\mathcal B}_g(r)\, ,
$$
where $\rho$ is constructed in \eqref{e38}. This decomposition of $\rho^*\text{At}({\mathcal L})$
gives a holomorphic projection
\begin{equation}\label{f0}
f_0\, :\, \rho^*\text{At}({\mathcal L})\, \longrightarrow\, {\mathcal O}_{{\mathcal C}({\mathcal L})}\, .
\end{equation}
Let
\begin{equation}\label{aes2}
0\, \longrightarrow\, {\mathcal O}_{{\mathcal C}({\mathcal L})}\, \longrightarrow\, 
\text{At}(\rho^*{\mathcal L})\, \stackrel{\zeta'}{\longrightarrow}\, T{\rm Conn}({\mathcal L})\, \longrightarrow\, 0
\end{equation}
be the Atiyah exact sequence for $\rho^*{\mathcal L}$ (see \eqref{e11}). We also have a tautological projection
$$
h_0\, :\, \text{At}(\rho^* {\mathcal L})\, \longrightarrow\, \rho^*\text{At}({\mathcal L})
$$
such that the diagram
$$
\begin{matrix}
\text{At}(\rho^*{\mathcal L})& \stackrel{h_0}{\longrightarrow} &
\rho^*\text{At}({\mathcal L})\\
\,\,\,\Big\downarrow \zeta' && ~\,~\,~\,~\,\Big\downarrow \rho^*\zeta\\
T{\rm Conn}({\mathcal L})& \stackrel{d\rho}{\longrightarrow} & \rho^* T{\mathcal B}_g(r)
\end{matrix}
$$
is commutative, where $\zeta$ and $\zeta'$ are the projections in \eqref{atl} and \eqref{aes2}
respectively, and $d\rho$ is the differential of the projection $\rho$ in
\eqref{e38}; see \cite[(3.9)]{BHS} \cite[Section 3]{BH1} for the construction of $h_0$. The composition of homomorphisms
\begin{equation}\label{fh}
f_0\circ h_0\, :\, \text{At}(\rho^* {\mathcal L})\, \longrightarrow\, {\mathcal O}_{{\mathcal C}({\mathcal L})},
\end{equation}
where $f_0$ is constructed in \eqref{f0}, gives
a holomorphic splitting of the Atiyah exact sequence in \eqref{aes2}. Hence $f_0\circ h_0$
defines a holomorphic connection on $\rho^*{\mathcal L}$; see \cite[Proposition 3.3]{BHS}.

The curvature $\Omega_{\mathcal L}$ of the holomorphic connection $f_0\circ h_0$ on $\rho^*{\mathcal L}$
in \eqref{fh} 
is a closed algebraic $2$--form on ${\rm Conn}({\mathcal L})$. This algebraic $2$--form $\Omega_{\mathcal 
L}$ is symplectic. (See \cite[Section 3]{BH1}, \cite{BHS}.)

Recall the holomorphic $T^*{\mathcal B}_g(r)$--torsor structure of ${\rm Conn}({\mathcal L})$. The above 
symplectic form $\Omega_{\mathcal L}$ on ${\rm Conn}({\mathcal L})$ is compatible with the $T^*{\mathcal 
B}_g(r)$--torsor structure. This means that for any locally defined holomorphic section of the projection $\rho$ 
in \eqref{e38}
$$
{\mathcal B}_g(r)\, \supset\, U\, \stackrel{s}{\longrightarrow}\, {\rm Conn}({\mathcal L})
$$
and any holomorphic $1$--form $\omega\, \in\, H^0(U,\, T^*U)$, we have
\begin{equation}\label{sf1}
s^*\Omega_{\mathcal L} + d\omega \,=\, (s+\omega)^*\Omega_{\mathcal L}\, ;
\end{equation}
note that $y\, \longmapsto\, s(y)+\omega(y)$ is a holomorphic section, over
$U$, of the projection $\rho$ in \eqref{e38}.

\begin{corollary}\label{cor5}\mbox{}
\begin{enumerate}
\item The moduli space ${\mathcal H}_g(r)$ in \eqref{e26} has a canonical algebraic symplectic
structure $\Omega_{{\mathcal H}_g(r)}$.

\item The symplectic form $\Omega_{{\mathcal H}_g(r)}$ on ${\mathcal H}_g(r)$ is compatible with the
$T^*{\mathcal B}_g(r)$--torsor structure of ${\mathcal H}_g(r)$ obtained in Theorem \ref{thm1}.

\item There is a holomorphic line bundle $\mathbf{L}$ on ${\mathcal H}_g(r)$ and a holomorphic
connection $\nabla^{\mathbf{L}}$ on $\mathbf{L}$ such that the curvature of $\nabla^{\mathbf{L}}$
is the symplectic form $\Omega_{{\mathcal H}_g(r)}$.
\end{enumerate}
\end{corollary}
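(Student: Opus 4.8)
The plan is to deduce all three assertions by transporting structures along the canonical isomorphism ${\mathcal H}\,:\,{\mathcal H}_g(r)\,\longrightarrow\,{\mathcal C}({\mathcal L})$ of Theorem \ref{thm3}. Recall that ${\mathcal C}({\mathcal L})$ already carries the algebraic symplectic form $\Omega_{\mathcal L}$ recalled above, which is the curvature of the tautological connection $f_0\circ h_0$ (see \eqref{fh}) on $\rho^*{\mathcal L}$. Since ${\mathcal H}$ is an algebraic isomorphism, setting
\[
\Omega_{{\mathcal H}_g(r)}\,:=\,{\mathcal H}^*\Omega_{\mathcal L}
\]
produces a closed, nondegenerate algebraic $2$-form on ${\mathcal H}_g(r)$ (pullback commutes with $d$, and nondegeneracy is preserved under an isomorphism); this gives assertion (1). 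The construction is canonical precisely because both $\Omega_{\mathcal L}$ and ${\mathcal H}$ are canonical.

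For assertion (2), I would use that ${\mathcal H}$ is an isomorphism of $T^*{\mathcal B}_g(r)$--torsors over ${\mathcal B}_g(r)$: it covers the identity of ${\mathcal B}_g(r)$ and intertwines the two translation actions of $T^*{\mathcal B}_g(r)$. Hence, for any locally defined holomorphic section $s\,:\,U\,\longrightarrow\,{\mathcal H}_g(r)$ of $\gamma$ (see \eqref{e26}) and any $\omega\,\in\,H^0(U,\,T^*U)$, one has ${\mathcal H}\circ(s+\omega)\,=\,({\mathcal H}\circ s)+\omega$. Combining this equivariance with the compatibility \eqref{sf1} of $\Omega_{\mathcal L}$ with the torsor structure of ${\mathcal C}({\mathcal L})$, I would compute
\[
(s+\omega)^*\Omega_{{\mathcal H}_g(r)}\,=\,\big(({\mathcal H}\circ s)+\omega\big)^*\Omega_{\mathcal L}\,=\,({\mathcal H}\circ s)^*\Omega_{\mathcal L}+d\omega\,=\,s^*\Omega_{{\mathcal H}_g(r)}+d\omega\, ,
\]
which is exactly the asserted compatibility of $\Omega_{{\mathcal H}_g(r)}$ with the $T^*{\mathcal B}_g(r)$--torsor structure of Theorem \ref{thm1}.

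For assertion (3), I would simply pull back the line bundle with its connection. Set $\mathbf{L}\,:=\,{\mathcal H}^*(\rho^*{\mathcal L})$ and $\nabla^{\mathbf{L}}\,:=\,{\mathcal H}^*(f_0\circ h_0)$, the pullback of the tautological connection in \eqref{fh}. Since pullback commutes with taking curvature, the curvature of $\nabla^{\mathbf{L}}$ equals ${\mathcal H}^*\Omega_{\mathcal L}\,=\,\Omega_{{\mathcal H}_g(r)}$, as needed. Because ${\mathcal H}$ covers the identity, so that $\rho\circ{\mathcal H}\,=\,\gamma$, one in fact has the concrete description $\mathbf{L}\,=\,\gamma^*{\mathcal L}$.

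I expect no genuine obstacle at this stage: the substantive work has already been carried out in Theorem \ref{thm3} (the identification of the two torsors) and in the construction of $\Omega_{\mathcal L}$ on ${\mathcal C}({\mathcal L})$, so the corollary is a transport-of-structure argument. The only point meriting care is that ${\mathcal H}$ is an isomorphism of \emph{torsors} rather than merely of fiber bundles, which is what guarantees that both the symplectic form and its compatibility with the torsor structure pass correctly to ${\mathcal H}_g(r)$.
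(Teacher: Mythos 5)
Your proposal is correct and follows essentially the same route as the paper: the paper also defines $\Omega_{{\mathcal H}_g(r)}:={\mathcal H}^*\Omega_{\mathcal L}$, derives the compatibility \eqref{sfl} directly from \eqref{sf1}, and takes $\mathbf{L}={\mathcal H}^*\rho^*{\mathcal L}$ with $\nabla^{\mathbf{L}}={\mathcal H}^*(f_0\circ h_0)$. Your added observation that $\mathbf{L}=\gamma^*{\mathcal L}$ since $\rho\circ{\mathcal H}=\gamma$ is a correct and harmless refinement.
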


\begin{proof}
Using the isomorphism ${\mathcal H}$ in Theorem \ref{thm3}, the above algebraic symplectic form
$\Omega_{\mathcal L}$ on ${\rm Conn}({\mathcal L})$ (see \eqref{sf1}) produces an algebraic symplectic
form
$$
\Omega_{{\mathcal H}_g(r)}\, :=\, {\mathcal H}^*\Omega_{\mathcal L}
$$
on ${\mathcal H}_g(r)$.

The compatibility condition in the statement (2) says that
for any locally defined holomorphic section of the projection $\gamma$ in \eqref{e26}
$$
{\mathcal B}_g(r)\, \supset\, U\, \stackrel{s}{\longrightarrow}\, {\mathcal H}_g(r)
$$
and any holomorphic $1$--form $\omega\, \in\, H^0(U,\, T^*U)$, the equality
\begin{equation}\label{sfl}
s^*\Omega_{{\mathcal H}_g(r)} + d\omega \,=\, (s+\omega)^*\Omega_{{\mathcal H}_g(r)}
\end{equation}
holds; note that $y\, \longmapsto\, s(y)+\omega(y)$ is a holomorphic section, over $U$, of
the projection $\gamma$ in \eqref{e26}. Now, \eqref{sfl} follows immediately from \eqref{sf1}.

We recall that $\Omega_{\mathcal L}$ is the curvature of the holomorphic connection $f_0\circ h_0$
(see \eqref{fh}) on the holomorphic line bundle $\rho^*{\mathcal L}$. Therefore, $\Omega_{{\mathcal H}_g(r)}$
is the curvature
of the holomorphic connection $$\nabla^{\mathbf{L}}\,:=\, {\mathcal H}^*(f_0\circ h_0)$$ on the
holomorphic line bundle $\mathbf{L}\, :=\, {\mathcal H}^*\rho^*{\mathcal L}$.
\end{proof}

\section{Isomonodromy and symplectic form}

\subsection{Projective structure on a Riemann surface}\label{se9.1}

Let $Y$ be a compact connected Riemann surface. A holomorphic coordinate chart on $Y$ is a pair
of the form $(U,\, \varphi)$, where $U\, \subset\, Y$ is an open subset and $\varphi\,
:\, U\, \longrightarrow\, {\mathbb C}{\mathbb P}^1$ is a holomorphic embedding. A holomorphic
coordinate atlas on $Y$ is a collection of holomorphic coordinate charts $\{(U_i,\, \varphi_i)\}_{i\in J}$
such that $Y\,=\, \bigcup_{i\in J} U_i$. A projective structure on $Y$ is given by a holomorphic
coordinate atlas $\{(U_i,\, \varphi_i)\}_{i\in J}$ satisfying the following condition:
For every $i,\, j\, \in\, J\times J$ with $U_i\cap U_j\, \not=\, \emptyset$,
and every connected component $V_c\, \subset\, U_i\bigcap U_j$, there is a
$\tau^c_{j,i} \, \in\, \text{PSL}(2, {\mathbb C})$ such that the
map $(\varphi_j\circ\varphi^{-1}_i)\big\vert_{\varphi_i(V_c)}$ is the restriction,
to $\varphi_i(V_c)$, of the automorphism of
${\mathbb C}{\mathbb P}^1$ given by $\tau_{j,i}$. Recall that 
$\text{PSL}(2, {\mathbb C})\,=\, \text{Aut}({\mathbb C}{\mathbb P}^1)$.

Two holomorphic coordinate atlases $\{(U_i,\, \varphi_i)\}_{i\in J}$ and $\{(U_i,\, \varphi_i)\}_{i\in J'}$ 
satisfying the above condition are called {\it equivalent} if their union $\{(U_i,\, \varphi_i)\}_{i\in J\cup 
J'}$ also satisfies the above condition. A {\it projective structure} on $Y$ is an equivalence class of 
holomorphic coordinate atlases satisfying the above condition.

Giving a projective structure on $Y$ is equivalent to giving a holomorphic Cartan geometry on
$Y$ for the pair of groups $(\text{PSL}(2,{\mathbb C}),\, B)$, where $B\, \subset\,
\text{PSL}(2,{\mathbb C})$ is the Borel subgroup
$$
B\, :=\, \Big\{
\begin{pmatrix}
a & b\\
c &d
\end{pmatrix}\, \in\, \text{PSL}(2,{\mathbb C})\, \mid\, c\, =\, 0\Big\}\, ;
$$
see \cite{Sh} for Cartan geometry. Let $\widetilde{B}\, \subset\, \text{SL}(2,{\mathbb C})$
be the Borel subgroup that projects to $B$ defined above. 
Giving a holomorphic Cartan geometry on
$Y$ for the pair of groups $(\text{SL}(2,{\mathbb C}),\, \widetilde{B})$ is equivalent to giving
a projective structure on $Y$ together with a theta characteristic on $Y$ \cite{Gu}.

For $g\, \geq\, 2$, let
\begin{equation}\label{eth}
\delta\,\, :\,\, {\mathcal P}_g\, \longrightarrow\, {\mathcal M}^\theta_g
\end{equation}
be the moduli space of triples of the form $(X,\, K^{1/2}_X,\, P)$, where
\begin{itemize}
\item $X$ is a compact connected Riemann surface of genus $g$,

\item $K^{1/2}_X$ is a theta characteristic on $X$, and

\item $P$ is a projective structure on $X$.
\end{itemize}
The map $\delta$ in \eqref{eth} sends any $(X,\, K^{1/2}_X,\, P)$ to $(X,\, K^{1/2}_X)$.

\begin{proposition}\label{prop3}
The moduli space ${\mathcal H}_g(r)$ in \eqref{e26} admits a natural projection
$$
f\,\, :\,\, {\mathcal H}_g(r)\, \longrightarrow\, {\mathcal P}_g\, ,
$$
where ${\mathcal P}_g$ is defined in \eqref{eth}.
\end{proposition}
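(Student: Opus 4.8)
The plan is to reduce the rank-$2r$ oper attached to a point of ${\mathcal H}_g(r)$ to a rank-$2$ oper, i.e.\ to a projective structure, by ``dividing out'' the stable bundle $F$. Fix $(X,\,K^{1/2}_X,\,F,\,D)\in{\mathcal H}_g(r)$ and represent $D$ by a holomorphic connection on $E:=J^1(F\otimes K^{-1/2}_X)$; by Corollary \ref{cor1} and Proposition \ref{prop1} the triple $(E,\,F\otimes K^{1/2}_X,\,D)$ is a stable $r$--oper. Since $F$ is stable it carries the canonical unitary flat connection $D^F$ of \cite{NS}, and Proposition \ref{prop-1} then gives an identification $E=F\otimes J^1(K^{-1/2}_X)$ under which $F\otimes K^{1/2}_X$ becomes $F\otimes(K^{1/2}_X\subset J^1(K^{-1/2}_X))$. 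Writing $G:=J^1(K^{-1/2}_X)$ and choosing any auxiliary connection $D^G_0$ on $G$, I expand $D=D^F\otimes\text{Id}_G+\text{Id}_F\otimes D^G_0+\theta$ with $\theta\in H^0(X,\,\text{End}(F)\otimes\text{End}(G)\otimes K_X)$ and set
$$D^G:=D^G_0+\tfrac1r\,\text{tr}_F(\theta),$$
a holomorphic connection on $G$ that is readily checked to be independent of $D^G_0$.

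Second I would verify that $(G,\,K^{1/2}_X,\,D^G)$ is a $1$--oper. The second fundamental form of $F\otimes K^{1/2}_X\subset E$ for $D$ is the identity of $F$ (Corollary \ref{cor1}), and the formation of the second fundamental form commutes with $\tfrac1r\text{tr}_F$, so the second fundamental form of $K^{1/2}_X\subset G$ for $D^G$ equals $\tfrac1r\text{tr}_F(\text{Id}_F\otimes{\rm id})={\rm id}$, the canonical isomorphism $K^{1/2}_X\xrightarrow{\sim}K^{-1/2}_X\otimes K_X$. Thus $D^G$ is a holomorphic connection on $J^1(K^{-1/2}_X)$ whose second fundamental form on $K^{1/2}_X$ is the identity; by Gunning \cite{Gu} (compare Section \ref{se9.1}) such data define, via the projectivization of the flat $\mathbb{P}^1$--bundle $(\mathbb{P}(G),\,\mathbb{P}(D^G))$ together with the section $\mathbb{P}(K^{1/2}_X)$, a projective structure $P$ on $X$ compatible with the fixed theta characteristic. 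I then define $f(X,\,K^{1/2}_X,\,F,\,D):=(X,\,K^{1/2}_X,\,P)$, so that $\delta\circ f=\beta\circ\gamma$.

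The main work, and the step I expect to be the real obstacle, is well-definedness: that $P$ depends only on the class of $D$ in ${\mathcal D}(F)={\mathcal C}(F)/\text{Aut}(J^1(F\otimes K^{-1/2}_X))$, and not on the representative connection, on $D^G_0$, or on the chosen holomorphic connection on $F$. The translations by $H^0(X,\,\text{ad}(F)\otimes K^{\otimes 2}_X)$ defining ${\mathcal C}(F)$ are trace-free in the $F$--factor, hence are annihilated by $\tfrac1r\text{tr}_F$ and leave $D^G$ unchanged, while the scalar factor $\mathbb{C}^\star$ of $\text{Aut}(J^1(F\otimes K^{-1/2}_X))$ in \eqref{aj} acts trivially on connections. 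The delicate point is invariance under the unipotent part $\varpi(v)$ of $\text{Aut}(J^1(F\otimes K^{-1/2}_X))$, which coincides with independence of the connection $D^F$ (changing $D^F$ alters the identification of Proposition \ref{prop-1} by such a unipotent automorphism). Here $D^G$ itself is \emph{not} invariant; what must be proved is that these operations alter $(G,\,D^G,\,K^{1/2}_X)$ only through a $K^{1/2}_X$--preserving gauge transformation together with a twist of the induced connection on $\det G={\mathcal O}_X$. Both are invisible to $P$: the gauge transformation fixes the section $\mathbb{P}(K^{1/2}_X)$ and so gives an isomorphism of flat $\mathbb{P}^1$--bundles-with-section, and the determinant twist disappears upon passing to $\mathbb{P}(G)$. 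Showing that the combined effect lands in this harmless subspace --- in particular that it contributes nothing in the lowering direction, which is already forced because both connections have second fundamental form the identity --- is the technical heart of the argument.

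Finally I would promote the pointwise construction to a morphism of moduli spaces. A holomorphic connection on $F$ can be chosen holomorphically over any local base (stable bundles admit holomorphic connections, Proposition \ref{prop1}), and the jet identifications, the partial trace, and the passage to $\mathbb{P}(G)$ are algebraic in families, so $f$ is holomorphic; the independence statement above guarantees that the local choices glue. Using that the theta characteristic propagates uniquely in families (as in \eqref{e16} and Lemma \ref{lem2}), this produces the desired holomorphic projection $f:{\mathcal H}_g(r)\longrightarrow{\mathcal P}_g$ with $\delta\circ f=\beta\circ\gamma$.
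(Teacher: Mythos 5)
Your route is genuinely different from the paper's, and its crucial step is left unproven. You reduce the rank-$2r$ oper to a rank-$2$ one by writing $E=F\otimes J^1(K^{-1/2}_X)$ via an auxiliary holomorphic connection $D^F$ on $F$ and then taking the normalized partial trace $\tfrac1r\mathrm{tr}_F$ of the connection form. The preliminary checks do go through: $D^G$ is independent of $D^G_0$, and the second fundamental form computation shows that $(J^1(K^{-1/2}_X),\,K^{1/2}_X,\,D^G)$ is a $1$--oper. But the whole construction hinges on the resulting projective structure being independent of $D^F$ --- equivalently, on invariance under the unipotent automorphisms $\varpi(v)$ in \eqref{aj} --- and you explicitly defer this, calling it ``the technical heart of the argument.'' It is not a routine verification. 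Writing $\nabla$ for the connection induced by $D$ on ${\rm End}(E)$, conjugation by ${\rm Id}+\varpi(v)$ changes the connection by $-\nabla\varpi(v)-\varpi(v)\nabla\varpi(v)$; since $D$ is not the product connection, $\nabla\varpi(v)$ contains the term $[\theta,\varpi(v)]$, and a \emph{partial} trace of a commutator does not vanish in general. One must show that the total change in $D^G$ is exactly the covariant derivative of a nilpotent lowering endomorphism of $J^1(K^{-1/2}_X)$ plus a central term, so that it is absorbed by a $K^{1/2}_X$--preserving gauge transformation and a determinant twist. Without this, $f$ is not well defined on ${\mathcal D}(F)$, and the gluing of local holomorphic choices of $D^F$ over the moduli space also fails. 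So there is a genuine gap.

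The paper sidesteps this issue entirely. It works with the integral-kernel description of Corollary \ref{cor2}, in which the quotient by ${\rm Aut}(J^1(F\otimes K^{-1/2}_X))$ has already been absorbed (via Theorem \ref{thm2}) and a point of the fibre is a class of kernels $s$ on $3\mathbf{\Delta}$ modulo $H^0(X,\,{\rm ad}(F)\otimes K^{\otimes 2}_X)$. Crucially, the connection on $F$ used to identify $q_1^*F$ with $q_2^*F$ near the diagonal is not an auxiliary choice: it is extracted canonically from $s$ itself by restricting to $2\mathbf{\Delta}$ and dividing by the canonical section $\sigma_0$ of \eqref{e29a}. After transporting the $F^*$ factor with this connection, one applies $\tfrac1r{\rm trace}$ and invokes \cite[p.~688, Theorem 2.2]{BR} to read off the projective structure. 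Well-definedness is then immediate, since the residual ambiguity lies in $H^0(X,\,{\rm ad}(F)\otimes K^{\otimes 2}_X)$, which vanishes on $2\mathbf{\Delta}$ and is annihilated by the trace. If you want to salvage your connection-level argument, the cleanest fix is to replace the arbitrary $D^F$ by the connection on $F$ canonically induced by $D$ (the one the paper denotes $s'_1$), which removes the choice you would otherwise have to quotient out.
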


\begin{proof}
Take any $(X,\, K^{1/2}_X,\, F,\, D)\, \in\, {\mathcal H}_g(r)$. Let
\begin{equation}\label{es}
s\,\, \in\,\,\frac{H^0\left(3\mathbf{\Delta},\, \left((F\otimes K^{3/2}_X)\boxtimes (F^*\otimes K^{3/2}_X)
\otimes{\mathcal O}_{X\times X}(3{\mathbf \Delta})\right)\big\vert_{3\mathbf{\Delta}}\right)}{
H^0(X,\, {\rm ad}(F)\otimes K^{\otimes 2}_X)}
\end{equation}
be the element corresponding to it given by Corollary \ref{cor2}. Restricting $s$ to
$2\mathbf{\Delta}\, \subset\,3\mathbf{\Delta}$, we get a section
\begin{gather}
s_1\, \in\, H^0\left(2\mathbf{\Delta},\, \left((F\otimes K^{3/2}_X)\boxtimes (F^*\otimes K^{3/2}_X)
\otimes{\mathcal O}_{X\times X}(3{\mathbf \Delta})\right)\big\vert_{2\mathbf{\Delta}}\right)\nonumber\\
\,=\, H^0\left(2\mathbf{\Delta},\,\left((F\boxtimes F^*)\otimes ((K^{3/2}_X\boxtimes K^{3/2}_X)\otimes
{\mathcal O}_{X\times X}(3{\mathbf \Delta}))\right)\big\vert_{2\mathbf{\Delta}}\right)\, . \nonumber
\end{gather}
Since $((K^{3/2}_X\boxtimes K^{3/2}_X)\otimes {\mathcal O}_{X\times X}(3{\mathbf \Delta}))\big\vert_{2\mathbf{\Delta}}$ is
canonically trivialized \cite[p.~688, Theorem 2.2]{BR} (this was noted in \eqref{e29a}), there is a unique section
\begin{equation}\label{es1}
s'_1\,\, \in\,\, H^0(2\mathbf{\Delta},\, (F\boxtimes F^*)\big\vert_{2\mathbf{\Delta}})
\end{equation}
such that $s_1\,=\, s'_1\otimes \textbf{t}$, where $\textbf{t}$ is the section of
$((K^{3/2}_X\boxtimes K^{3/2}_X)\otimes {\mathcal O}_{X\times X}(3{\mathbf \Delta}))\big\vert_{2\mathbf{\Delta}}$
that trivializes it.
On the other hand, $s_1\big\vert_{\mathbf{\Delta}}\,=\, \text{Id}_F$ (see Corollary \ref{cor2}),
which implies that $s'_1\big\vert_{\mathbf{\Delta}}\,=\, \text{Id}_F$, because the restriction of
$\textbf{t}$ to $\mathbf{\Delta}$ is the constant function $1$ on $X$ using the identification of
$\mathbf{\Delta}$ with $X$ given by $x\, \longmapsto\, (x,\, x)$. Consequently,
$s'_1$ defines a holomorphic connection on $F$. This holomorphic connection in turn provides an extension of
the section $s'_1$ to a section
$$
s_2\, \in\, H^0\left(3\mathbf{\Delta},\, (F\boxtimes F^*)\big\vert_{3\mathbf{\Delta}}\right)\, ;
$$
we note that using parallel translations, for the integrable connection on $F$ defined by $s'_1$, we get
a holomorphic isomorphism between $q^*_1 F$ and $q^*_2F$ over an analytic neighborhood of
$\mathbf{\Delta}\, \subset\, X\times X$ (recall that $q_i$ is the projection of
$X\times X$ to the $i$-th factor). Now, invoking the isomorphism
$$
s_2\, :\, (q^*_1 F)\big\vert_{3\mathbf{\Delta}}\, \longrightarrow\, (q^*_2 F)\big\vert_{3\mathbf{\Delta}}\, ,
$$
the element $s$ in \eqref{es} becomes
$$
s\,\, \in\,\, \frac{H^0\left(3\mathbf{\Delta},\, \left((F\otimes F^*\otimes K^{3/2}_X)\boxtimes (K^{3/2}_X)
\otimes{\mathcal O}_{X\times X}(3{\mathbf \Delta})\right)\big\vert_{3\mathbf{\Delta}}\right)}{
H^0(X,\, {\rm ad}(F)\otimes K^{\otimes 2}_X)}\, .
$$
Composing with the trace homomorphism $F\otimes F^*\,=\, \text{End}(F)\, \longrightarrow\, {\mathcal O}_X$
defined by $A\, \longmapsto\, \frac{1}{r}\text{trace}(A)$, the above element $s$ gives a section
$$
\phi_2\,\, \in\,\,H^0\left(3\mathbf{\Delta},\, \left((K^{3/2}_X)\boxtimes K^{3/2}_X)
\otimes{\mathcal O}_{X\times X}(3{\mathbf \Delta})\right)\big\vert_{3\mathbf{\Delta}}\right).
$$
This section $\phi_2$ defines a projective structure on $X$ \cite[p.~688, Theorem 2.2]{BR}.
Therefore, we have a map
$$
f\, :\, {\mathcal H}_g(r)\, \longrightarrow\, {\mathcal P}_g
$$
that sends any $(X,\, K^{1/2}_X,\, F,\, D)\, \in\, {\mathcal H}_g(r)$ to $(X,\, K^{1/2}_X,\, \phi_2)
\, \in\,{\mathcal P}_g$, where $\phi_2$ is constructed above from $D$.
\end{proof}

\subsection{Isomonodromy}

For any compact Riemann surface $X$ of genus $g$, with $g\, \geq\, 2$, let
${\mathcal D}_X(r)$ denote the moduli space of pairs of the form $(F,\, D)$, where $F$ is a stable vector
bundle on $X$ of rank $r$ and degree zero, and $D$ is a holomorphic connection on $F$.

\begin{proposition}\label{prop4}
For the map $f$ in Proposition \ref{prop3}, the fiber over any $(X,\, K^{1/2}_X,\, P)\, \in\,{\mathcal P}_g$
is canonically identified with the moduli space ${\mathcal D}_X(r)$.
\end{proposition}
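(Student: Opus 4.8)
The plan is to exhibit mutually inverse maps between the fiber $f^{-1}(X,\, K^{1/2}_X,\, P)$ and $\mathcal{D}_X(r)$, using the integral-kernel description of $\mathcal{H}_g(r)$ from Corollary \ref{cor2} together with the graded analysis of the section $s$ already set up in the proof of Proposition \ref{prop3}. A point of the fiber is a quadruple $(X,\, K^{1/2}_X,\, F,\, s)$, where $s$ is the class, modulo $H^0(X,\, {\rm ad}(F)\otimes K^{\otimes 2}_X)$, of a section over $3\mathbf{\Delta}$ with $s\big\vert_{\mathbf{\Delta}}\,=\, {\rm Id}_F$ whose associated projective structure is $P$. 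As in Proposition \ref{prop3}, restricting $s$ to $2\mathbf{\Delta}$ and dividing by the canonical trivialization of $(K^{3/2}_X\boxtimes K^{3/2}_X)\otimes\mathcal{O}_{X\times X}(3\mathbf{\Delta})\big\vert_{2\mathbf{\Delta}}$ produces $s'_1\in H^0(2\mathbf{\Delta},\, (F\boxtimes F^*)\big\vert_{2\mathbf{\Delta}})$ with $s'_1\big\vert_{\mathbf{\Delta}}\,=\,{\rm Id}_F$, that is, a holomorphic connection $D_F$ on $F$. The subspace $H^0(X,\, {\rm ad}(F)\otimes K^{\otimes 2}_X)$ enters through the inclusion $K^{\otimes 2}_{\mathbf{\Delta}}\hookrightarrow\mathcal{O}_{3\mathbf{\Delta}}$, hence its elements vanish to order two along $\mathbf{\Delta}$ and do not affect $s\big\vert_{2\mathbf{\Delta}}$; thus $D_F$ is well defined on the fiber, and I take the first map to be $(F,\, s)\,\longmapsto\,(F,\, D_F)\,\in\,\mathcal{D}_X(r)$.

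For the inverse I read off the three graded pieces of $s$. The restriction $s\big\vert_{\mathbf{\Delta}}$ is the symbol, pinned to ${\rm Id}_F$; the next graded piece is exactly $s'_1$, equivalent to $D_F$; and the deepest graded piece, the part lying in $K^{\otimes 2}_{\mathbf{\Delta}}$, is the ``third order'' term, whose trace---measured after using $D_F$ to identify $q^*_1F$ with $q^*_2F$ to third order near $\mathbf{\Delta}$---is the projective structure $\phi_2$, while its trace-free part is precisely what is quotiented out by $H^0(X,\, {\rm ad}(F)\otimes K^{\otimes 2}_X)$. Consequently the class of $s$ is determined by the pair $(D_F,\, \phi_2)$; fixing $\phi_2\,=\,P$, it is determined by $D_F$ alone.

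To see that every connection $D_F$ occurs, I reconstruct $s$ explicitly. By Gunning's correspondence (recalled in Section \ref{se9.1}), $P$ together with $K^{1/2}_X$ yields a holomorphic connection $D^J$ on $J^1(K^{-1/2}_X)$, and, as in Proposition \ref{prop1}, the connection $D\,=\, D_F\otimes{\rm Id}_{J^1(K^{-1/2}_X)}+{\rm Id}_F\otimes D^J$ on $J^1(F\otimes K^{-1/2}_X)\,=\, F\otimes J^1(K^{-1/2}_X)$ (Proposition \ref{prop-1}) furnishes an element $(F,\, [D])$ of the fiber. I take the inverse map to be $(F,\, D_F)\,\longmapsto\,(F,\, [D])$. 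That the two maps are mutually inverse follows from the graded analysis: the middle piece of the $s$ produced by $D$ returns $D_F$, and after trivializing $F$ along $D_F$-parallel transport the $F$-factor contributes the identity to all orders, so the trace of the deepest piece isolates the scalar ${\rm SL}(2,\mathbb{C})$-oper on $J^1(K^{-1/2}_X)$ defining $P$, i.e.\ $\phi_2\,=\,P$. Letting $F$ range over the stable bundles of rank $r$ and degree zero then identifies the entire fiber with $\mathcal{D}_X(r)$, and since each step is canonical the identification extends over families.

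The step I expect to be the main obstacle is this last trace computation on the deepest graded piece: verifying cleanly that the trace of the third-order part of $D\,=\, D_F\otimes{\rm Id}+{\rm Id}_F\otimes D^J$, read off relative to $D_F$, reproduces exactly the oper datum $D^J$ underlying $P$, and hence that the assignment $s\,\longleftrightarrow\,(D_F,\, \phi_2)$ is a genuine bijection rather than merely a surjection carrying hidden identifications. Everything else reduces to the bookkeeping already available from Corollary \ref{cor2} and Proposition \ref{prop3}.
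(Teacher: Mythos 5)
Your forward map coincides with the paper's: restrict $s$ to $2\mathbf{\Delta}$, divide by the canonical trivialization, and read off the connection $s'_1$ as in \eqref{es1}; and your observation that the ambiguity $H^0(X,\,{\rm ad}(F)\otimes K^{\otimes 2}_X)$ vanishes to second order along $\mathbf{\Delta}$, together with the graded analysis showing that the class of $s$ is determined by the pair $(D_F,\,\phi_2)$, is a correct (and welcome) justification of injectivity that the paper leaves implicit. Where you genuinely diverge is in the inverse map. The paper stays entirely at the level of integral kernels: given $(F,\,D)\,\in\,{\mathcal D}_X(r)$ it takes the parallel-transport kernel $\widehat{s}_D\,\in\,H^0(3\mathbf{\Delta},\,(F\boxtimes F^*)\big\vert_{3\mathbf{\Delta}})$ and the kernel $P'$ of the projective structure furnished by \cite[p.~688, Theorem 2.2]{BR}, and simply multiplies them, taking $s\,=\,\widehat{s}_D\otimes P'$. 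With this choice both verifications become immediate: $P'\big\vert_{2\mathbf{\Delta}}$ is the canonical trivializing section $\sigma_0$, so the middle graded piece of $s$ returns $D$, and the identification $s_2$ used to compute the trace is $\widehat{s}_D$ itself, so the trace is exactly $P'$. You instead build the connection $D_F\otimes{\rm Id}+{\rm Id}_F\otimes D^J$ on $F\otimes J^1(K^{-1/2}_X)$ using Gunning's connection $D^J$ attached to $P$, and must then check that the kernel of the associated second order operator has trace equal to $P$. That check is precisely the compatibility of Gunning's connection-level description of projective structures with the Biswas--Raina kernel description --- true, but not proved anywhere in this paper, and it is exactly the step you flag as your ``main obstacle.'' So your argument is structurally sound and would close with that one computation supplied; the paper's choice of inverse is engineered so that the computation never arises.
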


\begin{proof}
Take any $(X,\, K^{1/2}_X,\, F,\, D)\, \in\, {\mathcal H}_g(r)$
in the fiber of $f$ over $(X,\, K^{1/2}_X,\, P)\, \in\,{\mathcal P}_g$. As seen before, $s'_1$ in \eqref{es1}
defines a holomorphic connection on $F$. So we get a map from the fiber of $f$ over
$(X,\, K^{1/2}_X,\, P)$ to ${\mathcal D}_X(r)$ that sends any
$(X,\, K^{1/2}_X,\, F,\, D)$ to $(F,\, s'_1)$.

Conversely, take any $(F,\, D)\, \in\, {\mathcal D}_X(r)$. Then $D$ gives a section
$$
\widehat{s}_D\,\, \in\,\,H^0(3\mathbf{\Delta},\, (F\boxtimes F^*)\big\vert_{3\mathbf{\Delta}})\, .
$$
On the other hand, the projective structure $P$ gives a section
$$
P'\,\, \in\,\, H^0(3\mathbf{\Delta},\, ((K^{3/2}_X\boxtimes K^{3/2}_X)
\otimes{\mathcal O}_{X\times X}(3{\mathbf \Delta}))\big\vert_{3\mathbf{\Delta}})
$$
\cite[p.~688, Theorem 2.2]{BR}. So we have
$$
\widehat{s}_D\otimes P'\,\, \in\,\, H^0\left(3\mathbf{\Delta},\, \left((F\otimes K^{3/2}_X)\boxtimes (F^*\otimes K^{3/2}_X)
\otimes{\mathcal O}_{X\times X}(3{\mathbf \Delta})\right)\big\vert_{3\mathbf{\Delta}}\right)\, .
$$
Consider the image of $\widehat{s}_D\otimes P'$
$$
\widetilde{\widehat{s}_D\otimes P'}\, \in\,
\frac{H^0\left(3\mathbf{\Delta},\, \left((F\otimes K^{3/2}_X)\boxtimes (F^*\otimes K^{3/2}_X)
\otimes{\mathcal O}_{X\times X}(3{\mathbf \Delta})\right)\big\vert_{3\mathbf{\Delta}}\right)}{
H^0(X,\, {\rm ad}(F)\otimes K^{\otimes 2}_X)}
$$
in the quotient space. Now $(X,\, K^{1/2}_X,\, F,\, \widetilde{\widehat{s}_D\otimes P'})$ gives an element of
${\mathcal H}_g(r)$ in the fiber of $f$ over $(X,\, K^{1/2}_X,\, P)\, \in\,{\mathcal P}_g$ (see 
Corollary \ref{cor2}).
\end{proof}

In view of Proposition \ref{prop4}, the isomonodromy condition for
integrable holomorphic connections defines a holomorphic foliation
\begin{equation}\label{imf}
{\mathcal F}\,\, \subset\,\, T{\mathcal H}_g(r)
\end{equation}
which gives the following decomposition:
\begin{equation}\label{imf2}
T{\mathcal H}_g(r)\,=\, {\mathcal F}\oplus \text{kernel}(df)\, ,
\end{equation}
where $df\, :\, T{\mathcal H}_g(r)\, \longrightarrow\, f^*T{\mathcal P}_g$ is the differential of the
projection $f$ in Proposition \ref{prop3}. Consequently, the differential $df$ identifies ${\mathcal F}$ with
$f^*T{\mathcal P}_g$.

We note that ${\mathcal D}_X(r)$ has a natural holomorphic symplectic form \cite{Go}, \cite{AB}. Also,
${\mathcal P}_g$ has a holomorphic symplectic form which is constructed using the monodromy representation
associated to any projective structure \cite{Go}, \cite{AB}, \cite{He}. Therefore, using the decomposition
in \eqref{imf2} we obtain two closed holomorphic $2$-forms on ${\mathcal H}_g(r)$: one is given by the
symplectic form on ${\mathcal D}_X(r)$ and the other is given by the symplectic form on ${\mathcal P}_g$.

We end with the following conjecture:

\begin{conjecture}\label{conj}
The holomorphic symplectic form on ${\mathcal H}_g(r)$ in Corollary \ref{cor5}(1) is a constant linear
combination of the above two holomorphic $2$-forms on ${\mathcal H}_g(r)$.
\end{conjecture}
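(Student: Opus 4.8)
The plan is to compare the three closed holomorphic $2$-forms pointwise, using the splitting $T{\mathcal H}_g(r)\,=\,{\mathcal F}\oplus\ker(df)$ of \eqref{imf2}. Write $\omega_{\mathcal D}$ for the $2$-form on ${\mathcal H}_g(r)$ induced by the fiberwise symplectic form of ${\mathcal D}_X(r)$ and $\omega_{\mathcal P}\,=\,f^*(\text{symplectic form of }{\mathcal P}_g)$. By construction $\omega_{\mathcal P}$ annihilates $\ker(df)$, so it is supported on ${\mathcal F}\times{\mathcal F}$, while $\omega_{\mathcal D}$ is supported on $\ker(df)\times\ker(df)$; in particular any combination $a\omega_{\mathcal D}+b\omega_{\mathcal P}$ has vanishing mixed component. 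Hence the conjecture is equivalent to the conjunction of three statements: (i) the restriction of $\Omega_{{\mathcal H}_g(r)}$ to a fiber ${\mathcal D}_X(r)$ of $f$ is a fixed scalar multiple of the Atiyah--Bott--Goldman form; (ii) the restriction of $\Omega_{{\mathcal H}_g(r)}$ to ${\mathcal F}\,\cong\,f^*T{\mathcal P}_g$ is a fixed scalar multiple of the pullback of the symplectic form of ${\mathcal P}_g$; and (iii) the mixed component $\Omega_{{\mathcal H}_g(r)}({\mathcal F},\,\ker(df))$ vanishes identically.

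For (i), I would fix $(X,\,K^{1/2}_X,\,P)$ and restrict to the fiber. Since $\Omega_{{\mathcal H}_g(r)}\,=\,{\mathcal H}^*\Omega_{\mathcal L}$ is the curvature of a connection pulled back from the theta line bundle ${\mathcal L}$, its restriction to this fiber is exactly the curvature-of-theta form on the moduli of connections studied in \cite{BH1}, \cite{BH2} over the fixed curve $X$; there this form is identified, up to a universal constant, with the Goldman form. The identification of the fiber of $f$ with ${\mathcal D}_X(r)$ in Proposition \ref{prop4} intertwines the two descriptions, so (i) should follow by transporting the fixed-curve result of \cite{BH2}.

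For (ii), I would exploit that the projection $f$ of Proposition \ref{prop3} is built from the $\frac{1}{r}\,{\rm trace}$ (determinant-type) construction, under which the rank-$r$ theta-bundle data on ${\mathcal B}_g(r)$ maps to the rank-one projective-structure data on ${\mathcal P}_g$; compatibly with the determinant isomorphism of \eqref{dc}, the curvature $\Omega_{\mathcal L}$ should push forward along ${\mathcal F}$ to the canonical curvature $2$-form governing ${\mathcal P}_g$, which is its symplectic form. The isomorphism $df\, :\, {\mathcal F}\,\stackrel{\sim}{\longrightarrow}\,f^*T{\mathcal P}_g$ then matches the two forms up to a scalar (the $\frac{1}{r}$ normalization suggests a power of $r$), reducing (ii) to the rank-one projective-structure case.

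The main obstacle is (iii). Conceptually it asserts that the isomonodromy leaves are $\Omega_{{\mathcal H}_g(r)}$-orthogonal to the fibers of $f$. The natural route is the Riemann--Hilbert correspondence: since $\Omega_{{\mathcal H}_g(r)}$ is the curvature of the connection $f_0\circ h_0$ of \eqref{fh} on a line bundle pulled back from ${\mathcal L}$, one expects it to agree, after Riemann--Hilbert, with the Betti (monodromy) symplectic form, for which isomonodromic deformations are flat and hence span isotropic directions transverse to the fibers. Making this precise requires showing that $f_0\circ h_0$ is flat along ${\mathcal F}$, equivalently that parallel transport for isomonodromy preserves the pulled-back theta data; I expect this to be the technically heaviest point, since it couples the deformation of the curve, of the bundle, and of the connection simultaneously. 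Finally, the two scalars in (i) and (ii) are independent of the point $(X,\,K^{1/2}_X,\,F,\,D)$ because the fixed-curve identification of \cite{BH2} and the rank-one identification are both canonical (universal in families), so the proportionality constants they produce are absolute; once pointwise proportionality is established on a dense open set this yields the asserted global constant linear combination, the $r$-oper (equivalently $\mathrm{SL}(2,{\mathbb C})$) normalization fixing the constants in the classical accessory-parameter case.
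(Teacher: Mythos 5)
You should note at the outset that the paper itself does not prove this statement: it is formulated as Conjecture \ref{conj} and explicitly left open, so there is no proof of record to compare against. Your reduction, via the splitting \eqref{imf2}, of the conjecture to the three block statements (i) the fiberwise restriction is a fixed multiple of the Goldman form on ${\mathcal D}_X(r)$, (ii) the restriction to ${\mathcal F}\cong f^*T{\mathcal P}_g$ is a fixed multiple of the pullback of the symplectic form of ${\mathcal P}_g$, and (iii) the mixed component vanishes, is correct linear algebra and is the natural way to organize an attack; it is consistent with how the paper itself defines the two $2$-forms. But what you have written is a programme rather than a proof: each of (i), (ii), (iii) is itself an unproven assertion, and you supply only heuristics ("should follow", "one expects") for all three.

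Two concrete gaps. For (i): the fiber $f^{-1}(X,\,K^{1/2}_X,\,P)\,\cong\,{\mathcal D}_X(r)$ sits inside ${\mathcal H}_g(r)$ through the assignment $(F,\,D)\,\longmapsto\,\widetilde{\widehat{s}_D\otimes P'}$ of Proposition \ref{prop4}, so the embedding depends on the chosen projective structure $P$; the fixed-curve identification of the curvature of the theta connection with the Goldman form in \cite{BH1}, \cite{BH2} concerns the torsor ${\mathcal C}_X(r)\,\longrightarrow\,{\mathcal N}_X(r)$ over a single curve, and transporting it requires verifying that the restriction of ${\mathcal H}$ to this $P$-dependent slice coincides with the isomorphism of \cite{BH2} --- which is precisely the coupling of curve, bundle and connection deformations you defer to (iii), so (i) and (iii) cannot be cleanly separated. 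For (iii): you offer no argument, only the expectation that $f_0\circ h_0$ is flat along ${\mathcal F}$; worse, the heuristic you invoke --- that isomonodromic directions are isotropic --- would, taken literally, force the restriction of $\Omega_{{\mathcal H}_g(r)}$ to ${\mathcal F}$ to vanish and hence contradict (ii) unless $b\,=\,0$. What (iii) actually requires is that ${\mathcal F}$ and $\ker(df)$ be symplectic orthogonal complements for $\Omega_{{\mathcal H}_g(r)}$, which is a genuinely different and unestablished statement. As it stands the proposal does not settle the conjecture, and the statement should be regarded as remaining open.
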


%%%%%%%%%%%%%%%%%%%%%%%%%%%%%%%%%%%%%%%%%%%%%%%%%%%%%%%%%%%%%%%%%%%%% 


\begin{thebibliography}{AAAA}

\bibitem[At]{At} M. F. Atiyah, Complex analytic connections in fibre
bundles, \textit{Trans. Amer. Math. Soc.} \textbf{85} (1957), 181--207.

\bibitem[AB]{AB} M. F. Atiyah and R. Bott, The Yang-Mills equations over Riemann surfaces,
{\it Philos. Trans. Roy. Soc. London} {\bf 308} (1983), 523--615.

\bibitem[BNR]{BNR} A. Beauville, M. S. Narasimhan and S. Ramanan, Spectral curves and the generalised
theta divisor, {\it J. Reine Angew. Math.} {\bf 398} (1989), 169--179.

\bibitem[BD1]{BD} A. Beilinson and V. G. Drinfeld, Opers,
{\em arXiv math/0501398} (1993).

\bibitem[BD2]{BD2}
A.~Beilinson and V. G. Drinfeld, Quantization of Hitchin's integrable system and Hecke eigensheaves, (1991).

\bibitem[BS]{BS} A. A. Beilinson and V. V. Schechtman, Determinant bundles and Virasoro
algebras, {\it Comm. Math. Phys.} {\bf 118} (1988), 651--701. 

\bibitem[BB]{BB} D. Ben-Zvi and I. Biswas, Theta functions and Szeg\H{o} kernels,
{\it Int. Math. Res. Not.} (2003), no. 24, 1305--1340.

\bibitem[Bi]{Bi} I. Biswas, Coupled connections on a compact Riemann
surface, {\it Jour. Math. Pures Appl.} {\bf 82} (2003), 1--42.

\bibitem[BH1]{BH1} I. Biswas and J. Hurtubise, Meromorphic
connections, determinant line bundles and the Tyurin parametrization, arXiv:1907.00133,
\textit{Asian Jour. Math.} (to appear).

\bibitem[BH2]{BH2} I. Biswas and J. Hurtubise, A canonical connection on bundles
on Riemann surfaces and Quillen connection on the theta bundle, \textit{Adv. Math.} {\bf 389}
(2021), Article 107918, arXiv:2102.00624.

\bibitem[BHH]{BHH} I. Biswas, V. Heu and J. Hurtubise, Isomonodromic deformations of
logarithmic connections and stability, {\it Math. Ann.} {\bf 366} (2016), 121--140.

\bibitem[BHS]{BHS} I. Biswas, J. Hurtubise and J. Stasheff, A construction of a universal connection,
{\it Forum Math.} {\bf 24} (2012), 365--378.

\bibitem[BR]{BR} I. Biswas and A. K. Raina, Projective structures on a Riemann surface. II,
{\it Internat. Math. Res. Not.} (1999), no. 13, 685--716.

\bibitem[Ch]{Ch} T. Chen, The associated map of the nonabelian Gauss--Manin connection,
{\it Cent. Eur. Jour. Math.} {\bf 10} (2012), 1407--1421.

\bibitem[DN]{DN} J.-M. Dr\'ezet and M. S. Narasimhan, Groupe de Picard des vari\'et\'es de
modules de fibr\'es semi-stables sur les courbes alg\'ebriques,
{\it Invent. Math.} {\bf 97} (1989), 53--94. 

\bibitem[DS1]{DS1} V.~G.~Drinfeld and V.~V.~Sokolov, Lie algebras and equations of Korteweg-de
Vries type, Current problems in mathematics, Vol. 24, 81--180,
Itogi Nauki i Tekhniki, Akad. Nauk SSSR, Vsesoyuz. Inst. Nauchn. i Tekhn. Inform., Moscow, 1984.

\bibitem[DS2]{DS2} V.~G.~Drinfeld and V.~V.~Sokolov, Equations of Korteweg-de Vries type, and
simple Lie algebras, {\it Dokl. Akad. Nauk SSSR} {\bf 258} (1981), 11--16.

\bibitem[Go]{Go} W. M. Goldman, The symplectic nature of fundamental groups of surfaces, {\it Adv.
in Math.} {\bf 54} (1984), 200--225.

\bibitem[GH]{GH} P. Griffiths and J. Harris, \textit{Principles of Algebraic Geometry}, Pure and Applied
Mathematics, Wiley-Interscience, New York, 1978.

\bibitem[Gu]{Gu} R. C. Gunning, \textit{Lectures on Riemann Surfaces}, Mathematical Notes 2,
Princeton University Press, Princeton, NJ, 1966. 

\bibitem[He]{He} D. A. Hejhal, Monodromy groups and linearly polymorphic functions, {\it Acta Math.}
{\bf 135} (1975), 1--55.

\bibitem[NS]{NS} M. S. Narasimhan and C. S. Seshadri, Stable and unitary vector
bundles on a compact {R}iemann surface, {\em Ann. of Math.} {\bf 82} (1965), 540--567.

\bibitem[Sh]{Sh} R. Sharpe, {\it Differential Geometry: Cartan's generalization of Klein's
Erlangen program}, Graduate Texts in Mathematics, 166. Springer-Verlag, New York, 1997.

\bibitem[We]{We} A. Weil, G\'en\'eralisation des fonctions ab\'eliennes, {\it Jour. Math.
Pures Appl.} {\bf 17} (1938), 47--87.

\end{thebibliography}
\end{document}